\newcommand{\R}{\ensuremath{\mathbb{R}}}
\newcommand{\CC}{\mathcal{C}}
\newcommand{\CS}{\ensuremath{\mathcal{S}}}
\newcommand{\CF}{\ensuremath{\mathcal{F}}}
\newcommand{\CO}{\ensuremath{\mathcal{O}}}
\newcommand{\CZ}{\ensuremath{\mathcal{Z}}}
\newcommand{\ov}{\overline}
\newcommand{\la}{\lambda}
\newcommand{\A}{\ensuremath{\mathcal{A}}}
\newcommand{\X}{\ensuremath{\mathcal{X}}}
\newcommand{\x}{\mathbf{x}}
\newcommand{\y}{\mathbf{y}}
\newcommand{\sgn}{\mathrm{sign}}
\newcommand{\de}{\delta}
\renewcommand{\d}{\mathrm{d}} 
\def\p{\partial}
\def\e{\varepsilon}
\newtheorem {theorem} {Theorem}
\newtheorem {definition} {Definition}
\newtheorem {proposition} {Proposition}
\newtheorem {corollary} {Corollary}
\newtheorem {lemma}  {Lemma}
\newtheorem {remark} {Remark}
\begin{document}
\renewcommand{\arraystretch}{1.5}

\title[Poincar\'e-Hopf Theorem for Filippov vector fields]
{Poincar\'e-Hopf Theorem for Filippov vector fields\\ on 2-dimensional compact manifolds}
\author[J.A. Casimiro, R.M. Martins, D.D. Novaes]
{Joyce A. Casimiro, Ricardo M. Martins$^{\ast}$, and Douglas D. Novaes}

\address{Departamento de Matem\'{a}tica, Instituto de Matem\'{a}tica, Estat\'{i}stica e Computa\c{c}\~{a}o Cient\'{i}fica (IMECC), Universidade
Estadual de Campinas (UNICAMP), Rua S\'{e}rgio Buarque de Holanda, 651, Cidade Universit\'{a}ria Zeferino Vaz, 13083--859, Campinas, SP,
Brazil.} \email{JAC: jcasimiro@ime.unicamp.br, DDN: ddnovaes@unicamp.br, RMM: rmiranda@unicamp.br.}
\thanks{$^\ast$Corresponding author.}

\subjclass[2010]{34A36, 37C25, 55M25}

\keywords{Index of singularities, piecewise smooth vector fields, Filippov vector fields, Poincaré--Hopf Theorem, Hairy Ball Theorem.}

\begin{abstract}
The Poincaré–Hopf Theorem relates the Euler characteristic of a 2-dimensional compact manifold to the local behavior of smooth vector fields defined on it. However, despite the importance of Filippov vector fields, concerning both their theoretical and applied aspects, until now, it was not known whether this theorem extends to Filippov vector fields. In this paper, we demonstrate that the Poincaré–Hopf Theorem applies to Filippov vector fields defined on 2-dimensional compact manifolds with smooth switching manifolds. As a result, we establish a variant of the Hairy Ball Theorem, asserting that ``any Filippov vector field on a sphere with smooth switching manifolds must have at least one singularity (in the Filippov sense) with positive index''. This extension is achieved by introducing a new index definition that includes the singularities of Filippov vector fields, such as pseudo-equilibria and tangential singularities. Our work extends the classical index definition for singularities of smooth vector fields to encompass those of Filippov vector fields with smooth switching manifolds. This extension is based on an invariance property under a regularization process, allowing us to establish all classical index properties. We also compute the indices of all generic $\Sigma$-singularities and some codimension-1 $\Sigma$-singularities, including fold-fold tangential singularities, regular-cusp tangential singularities, and saddle-node pseudo-equilibria.
\end{abstract}

\maketitle


\section{Introduction}
The Poincaré--Hopf Theorem is a classical result that relates the Euler characteristic of a compact manifold with the indices of the singularities of smooth vector fields defined on it (see, for instance, \cite{fulton1995,MR0226651}). A well known application of such a theorem is the {\it Hairy Ball Theorem} which asserts that any smooth vector field defined on a sphere has at least one singularity (see, for instance, \cite{10.2307/2320860}). This result, although it may seem purely theoretical, is also useful in applied areas of science (see, for instance, \cite{10.2307/25151781,9372851}).

On the other hand, Filippov vector fields constitute an important class of dynamical systems, mainly because of their wide range of applications in many areas of science (see, for instance, \cite{MR4297797,MR2994324}). Roughly speaking, Filippov vector fields are piecewise smooth vector fields for which the local trajectories at points of non-smoothness are provided by the Filippov's convention. The concept of singularity for Filippov vector fields encompasses the usual one (for smooth vector fields), but also comprehend some new kinds of points over the non-smoothness set, namely, pseudo-equilibria and tangency points (see, for instance \cite{guardia2011generic}). The formal definition of Filippov vector fields and their singularities will be provided in Section \ref{sec:FVF}.

So far, a version of the Poincaré--Hopf Theorem for Filippov vector fields is not known. This is mainly because of the lack of a nice index definition for singularities in this context. As expected, a version of the Hairy Ball Theorem for Filippov vector fields is not known either. In other words, the  following  question is  open: ``Is there any Filippov vector field  defined on a sphere without singularities?'' 

In this paper, we are firstly concerned in extending the classical index definition to singularities of Filippov vector fields. Such an extension is provided by Definitions \ref{indexfilippov}, \ref{indexp}, and \ref{indiceM}, which are based on an invariance property under a regularization process established by Theorem \ref{thm:invreg}. With this new index definition, we are able to state and prove our main result, the Poincaré--Hopf Theorem for Filippov vector fields defined on $2$-dimensional compact manifolds with smooth switching manifolds (Theorem \ref{teoremapoincare}). Consequently, we also get a Hairy Ball Theorem in this context, i.e. ``any Filippov vector field defined on a sphere must have at least one singularity (in the Filippov sense) with positive index''.

This paper is structured  as  follows.  Section \ref{sec:FVF} is devoted to discuss the basic notions and definition of Filippov vector fields.  The definition of index  for singularities of Filippov vector fields is provided in Section \ref{sec:def} and some of their properties are established in Section \ref{sec:prop}.  Sections \ref{sec:cod1} and \ref{sec:generic} are devoted to computing the indices of some singularities of Filippov vector fields. Our main result, the Poincaré--Hopf Theorem for Filippov vector fields defined on 2-dimensional compact manifolds with smooth switching manifolds, is then stated and proven in Section \ref{sec:PHTthmFVF}. Section \ref{sectiom:invariaceunderreg} is dedicated to discuss the invariance property of this new index definition under a regularization process and to presenting a proof for Theorem \ref{thm:invreg}. An Appendix is also provided with some concepts and properties of the classical index for singularities smooth vector fields.

\section{Basic notions on Filippov vector fields}\label{sec:FVF}

In this section, we introduce the Filippov's convention for piecewise smooth vector fields defined on 2-dimensional compact manifolds. We also introduce the concept of singularities of Filippov vector fields. In what follows, smooth simply means $C^l$, $l\geq 1$.

First, let $M$ be a smooth $2$-dimensional compact manifold and $N\subset M$ be a smooth $1$-dimensional compact submanifold of $M$. Denote by $C_i$, $i\in\{1,2,\ldots,k\}$, the connected components of $M\setminus N$ (which is a finite amount because of the compactness of $M$ and $N$). Let $\X_i: M\to TM$, for $i\in\{1,2,\ldots,k\}$, be smooth vector fields on $M$, i.e. $\X_i(p)\in T_pM$ for every $p\in M$. Accordingly, we consider a piecewise smooth vector field on $M$ given by
\begin{equation}\label{dds}
\CZ(p)=\X_i(p)\,\,\textrm{if}\,\, p\in C_i,\,\,\text{for}\,\, i\in\{1,2,\ldots,k\},
\end{equation}
for which $N$ is called  {\it switching manifold}. It is worth mentioning that $N$ is not assumed to be connected, which allows $\CZ(p)$ to have several smooth subsystems.

The trajectories of \eqref{dds}, for points in $N,$ can be locally described by the Filippov's convention (see \cite{F}). To do so, we start by obtaining a description of \eqref{dds} in local coordinates around points of the switching manifold $N$. Since $N$ is a $1$-dimensional compact submanifold of $M$, we can find, for each $q\in N,$ a chart $(U,\Phi)$ of $M$ around $q$ (i.e. $\Phi:U\to\R^2$ is a smooth local coordinate system and $U\subset M$ is a neighborhood of $q$) and a smooth function $H:U\to\R$, having $0$ as a regular value, such that 
\begin{itemize}
\item $S:=N\cap U=H^{-1}(0)$, and
\item $U\setminus S$ is composed by two disjoint open sets, $S^+=\{p\in U: H(p)\geq0 \}$ and $S^-=\{p\in U:H(p)\leq 0 \},$ such that $\CZ^+=\CZ|_{S^+}$ and $\CZ^-=\CZ|_{S^-}$ are smooth vector fields.
\end{itemize}
Let $D=\Phi(U)$ and consider the following smooth vector fields defined on $D$
$$F^+:=\Phi_* \CZ^+:\Sigma^+\to\R^2\,\,\, \text{and}\,\,\, F^-:=\Phi_* \CZ^-:\Sigma^ -\to\R^2$$ 
(pushforward of $\CZ^+$ and $\CZ^-$ by $\Phi$,  respectively). In addition,  denote
\[
\Sigma^+:=\{\x\in D: f(\x)\geq0 \}=\Phi(S^+)\,\,\, \text{and}\,\,\,
\Sigma^-:=\{\x\in D: f(\x)\leq0 \}=\Phi(S^-).
\]
Acordingly, the piecewise smooth vector field \eqref{dds} can be locally described around $q\in N$ by the following piecewise smooth vector field on $D$ (see Figure \ref{localdesc}),
\begin{equation}\label{locdds}
Z(\x)=\Phi_*(\CZ |_{U}):=\left\{\begin{array}{l}
F^+(\x),\,\,\,\textrm{if}\,\,\, f(\x)\geq0,\vspace{0.1cm}\\
F^-(\x),\,\,\,\textrm{if}\,\,\, f(\x)\leq0.
\end{array}\right. 
\end{equation}
Usually, the Filippov vector field \eqref{locdds} is concisely denoted by $Z=(F^+,F^-)_f$.

\begin{figure}[H]
\centering 
\begin{overpic}[width=12cm]{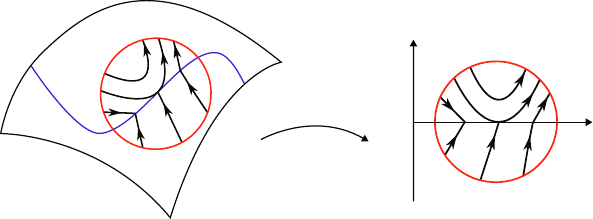}
\put(31,30){$U$}
\put(52,16.6){$\Phi$}
\put(1,15){$M$}
\put(96,15.5){$\Sigma$}
\put(40.25,25){$N$}
\put(92.5,7){$D$}
\put(20,25){$\CZ^+$}
\put(30,16){$\CZ^-$}
\put(83.5,22){$F^+$}
\put(83.5,10){$F^-$}
\put(41,9){$\CZ |_{U}\mapsto  Z=\Phi_* \CZ |_U$}
\end{overpic}
\vspace{0.5cm}
\caption{Local description of the piecewise smooth vector field $\eqref{dds}$ in $M$ by using local coordinates around points at the  switching manifold $N$.} \label{localdesc}
\end{figure}

\begin{remark}\label{def-f}
The Filippov's convention for trajectories of \eqref{locdds} only depends on the zero set $\Sigma$ of the function $f$. However, in the next section, we are going to introduce a regularization process for Filippov vector fields, which will be a key tool for defining index in this context. For such a process, the expression of the function $f$ plays some role (see expression \eqref{regula}) and that is why we must carry $f$ in the Filippov vector field \eqref{locdds} instead of just $\Sigma$. It is important to anticipate that the index definition will not depend on $f$  (see Remark \ref{invar-f-phi}).
\end{remark}

In \cite{F}, Filippov established that the trajectories of \eqref{locdds} correspond to the solutions of the differential inclusion 
\begin{equation}
\label{eq:di}
\dot\x \in\CF_Z(\x),
\end{equation} where $\CF_Z$ is the following set-valued function
\[
\CF_Z(\x)=\left\{
\begin{array}{ll}
\{F^-(\x)\}&\text{if}\,\, f(\x)<0,\\
\{F^+(\x)\}&\text{if}\,\, f(\x)>0,\\
\left\{\dfrac{1+s}{2} F^+(\x)+\dfrac{1-s}{2} F^-(\x):\,s\in[-1,1]\right \}&\text{if}\,\, f(\x)=0.
\end{array}\right.
\]
The piecewise smooth vector field \eqref{dds} is called {\it Filippov vector field} when its local trajectories (i.e. trajectories of \eqref{locdds} for each $q\in N$) are ruled by the Filippov's convention. Filippov vector fields defined on compact manifolds have been addressed in \cite{MT19,NV21}.

The solutions of the differential inclusion \eqref{eq:di} have an easy geometrical interpretation which is fairly discussed in the research literature. 
In order to establish this geometrical interpretation, some regions on $\Sigma$ must be distinguished. Before, denote by $Fh$ the first Lie derivative of $h$ in  the direction of the vector field $F$, i.e. $Ff(\x)=\langle \nabla f(\x),F(\x)\rangle$. 

The {\it crossing region}, denoted by $\Sigma^c$, consists of the points $\x\in\Sigma$ such that $F^+f(\x)F^-f(\x)>0$. Notice that at a point $\x\in\Sigma^c$,  the solutions either side of the  switching manifold $\Sigma$, reaching $\x$, can be joined continuously, forming a solution that crosses $\Sigma$ (see Figure \ref{filippovelements}).

The {\it sliding region} (resp. {\it escaping region}), denoted by $\Sigma^s$ (resp. $\Sigma^e$), consists of the points $\x\in\Sigma$ such that $F^+f(\x)<0$ and $F^-f(\x)>0$ (resp. $F^+f(\x)>0$ and $F^-f(\x)<0$). Notice that, at a point $\x\in\Sigma^s$ (resp. $\x\in \Sigma^e$), both vector  $F^+(\x)$ and $F^-(\x)$ point inward (resp. outward) $\Sigma$ in such a way that the solutions either side of $\Sigma$, reaching $\x$, cannot be concatenate. Alternatively, for $\x\in \Sigma^{s,e}=\Sigma^s\cup \Sigma^e\subset N$, the solutions either side of $\Sigma$, reaching $\x,$ can be joined continuously to solutions that slide on $\Sigma^{s,e}$ following the so-called {\it sliding vector field} (see Figure \ref{filippovelements}):
\begin{equation}\label{slid}
Z^s(\x)= \dfrac{F^- f(\x) F^+(\x)- F^+ f(\x) F^-(\x)}{F^- f(\x) - F^+ f(\x) },\,\, \text{for} \,\, \x\in \Sigma^{s,e}.
\end{equation}
We notice that, for each $\x\in \Sigma^{s,e}$, $Z^s(\x)$ corresponds to the unique vector in the convex combination $\CF_Z(\x)$ that is tangent to $\Sigma$ at $\x$. Thus, any trajectory of the sliding vector field \eqref{slid} satisfies the differential inclusion \eqref{eq:di} and, therefore, is a trajectory of the Filippov vector field \eqref{locdds}. The sliding vector field \eqref{slid} associated with the Filippov vector field \eqref{locdds} defined on $D$ naturally induces a sliding vector field on $U\cap N$ associated with the Filippov vector field \eqref{dds} defined on the manifold $M$.

In what follows, we introduce the concept of singularities of Filippov vector fields (see, for instance, \cite{guardia2011generic}).

\begin{definition}\label{def:sing} Consider the Filippov vector field $Z$ given by \eqref{locdds}. We say that $\x_0\in D$ is a singularity of $Z$ if one of the following conditions hold:
\begin{itemize}
\item[(a)] $\x_0\in \Sigma^+$ (resp. $\x_0\in \Sigma^-$) and $F^+(\x_0)=0$ (resp. $F^-(\x_0)=0$). 
\item[(b)] $\x_0 \in \Sigma^s \cup \Sigma^e$ and $Z^s(\x_0) = 0$. 
\item[(c)] $\x_0\in \Sigma$ and $F^+f(\x_0) = 0$ or $F^-f(\x_0) = 0$. 
\end{itemize}
In case (a), if $\x_0\notin\Sigma$, then $\x_0$ it is just a singularity of one of the smooth vector fields, $F^+$ or $F^-$. Otherwise, it is called {\bf boundary equilibrium}. In  case (b), $\x_0$ is called {\bf pseudo-equilibrium}. In case (c), $\x_0$ is called {\bf tangential singularity}. Finally, a  singularity $\x_0$ of $Z$ is called {\bf $\Sigma$-singularity} if it belongs to $\Sigma$.
\end{definition}
Any point that does not satisfies the definition above is called {\bf regular}.

\begin{figure}[H]
\centering 
\begin{overpic}[width=10cm]{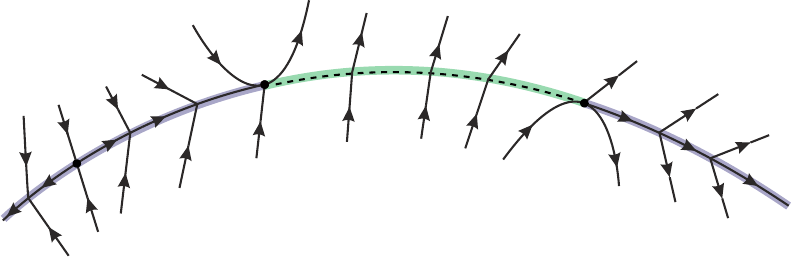}
\put(9,14){$\x_1$}
\put(32,23.5){$\x_2$}
\put(73,21.5){$\x_3$}
\put(0,8){$\Sigma^s$}
\put(98,9){$\Sigma^e$}
 \put(48,25){$\Sigma^c$}
\put(95,20){$F^+$}
\put(95,0){$F^-$}
\end{overpic}
\vspace{0.5cm}
\caption{Illustration of the Filippov's convetion. The points $\x_1\in\Sigma^s$ represents a pseudo-equilibrium and the points $\x_2,\x_3\in\p\Sigma^c$ represent tangential singularities.}\label{filippovelements}
\end{figure}
     
The Definition \eqref{def:sing} can be naturally extended for the Filippov vector field \eqref{dds} defined on the manifold $M$ as follows.

\begin{definition}\label{def:sing-mani} Consider the Filippov vector field $\CZ$ given by \eqref{dds}. We say that $p_0\in M$ is a singularity of $\CZ$ if there exists a chart  $(U,\Phi)$ of $M$ around $p_0$ such that $\Phi(p_0)$ satisfies Definition \ref{def:sing}.
\end{definition}

\section{Index of Filippov vector fields}\label{sec:def}

This section is devoted to provide the definition for index of singularities of Filippov vector fields defined on 2-dimensional compact  manifolds. 

First, for  $F,G:D\subset \R^2\to\R^2$ vector fields and $\x\in D,$ satisfying $\det ( G(\x) | F(\x) )\neq0$, we define the following auxiliary function,
\begin{equation}\label{HFG}
H_{(F,G)}(\x):=\dfrac{\|F(\x)\|^2 - \langle F(\x),G(\x)\rangle}{
\det \big( G(\x) | F(\x) \big) }.
\end{equation}

We start by defining the index of the Filippov vector field \eqref{locdds} on a circle $\p B$, where $B$ is the closed ball $B=B_{r}(\x_0)=\{\x\in\R^2:\, \|\x-\x_0\|\leq r\}$. indexfilippov

\begin{definition}\label{indexfilippov}
Let $Z$ be the Filippov vector field given by \eqref{locdds} and  $B\subset D$ a closed ball such that $\p B$ does not contain any singularities of $Z$. The index of $Z$ on $\partial B$ is defined by
\begin{equation}\label{eq:index}
I_{\partial B}(Z):=\dfrac{1}{2\pi}\left( J_{\partial B}(Z) + \int_{\Gamma^+} \omega_W +\int_{\Gamma^-} \omega_W\right),
\end{equation}
 where 
 $\omega_W$ is the following usual differential $1$-form
\[
\omega_W:=\dfrac{-y}{x^2+y^2}dx+\dfrac{x}{x^2+y^2}dy,
\]
 $\Gamma^{\pm}=\{Z(\x),\, \x\in \partial B_{r}(\x_0)\cap\Sigma^{\pm}\}$, and  $J_{\partial B}(Z)=J_{\partial B}^+(Z)+J_{\partial B}^-(Z)$ with 
\[
J_{\partial B}^{\pm}(Z)=\pm\begin{cases}\tan^{-1}\left(H_{(F^+,F^-)}(\pm r,0)\right)-\tan^{-1}\left(H_{(F^-,F^+)}(\pm r,0)\right),& \det ( F^+(\pm r,0) | F^-(\pm r,0) )\neq0,\\
0,& \det ( F^+(\pm r,0) | F^-(\pm r,0) )=0.
\end{cases}
\]
\end{definition}

Notice that, if $Z$ is smooth, that is, $F^+=F^-$, the above definition reduces to the usual index definition for smooth vector fields. Indeed, in this case, $\det ( F^+(\pm r,0) | F^-(\pm r,0) )=0$ and, therefore, $J_{\partial B}(Z)=0$. Hence,
\[
I_{\partial B}(Z):=\dfrac{1}{2\pi}\left( \int_{\Gamma^+} \omega_W +\int_{\Gamma^-} \omega_W\right)=\dfrac{1}{2\pi}\int_{\Gamma} \omega_W,
\]
being $\Gamma=\{Z(\x),\, \x\in \partial B_{r}(\x_0)\}$, which coincides with the usual index of the smooth vector field $Z$ on $\p B$ (see \eqref{IgA} from the Appendix).

	\begin{remark}\label{rem:equiJ}Geometrically, the absolute value of $J_{\partial B}^{\pm}(Z)$ coincides with the smallest angle between the vectors $ F^+(\pm r,0) $ and $ F^-(\pm r,0) $. The sign of $ J_{\partial B}^{\pm}(Z) $ is determined by the relative position of these vectors. Specifically, $ J_{\partial B}^{\pm}(Z) > 0 $ provided that $ F^{\mp}(\pm r,0) $ reaches $ F^{\pm}(\pm r,0) $ through a counter-clockwise rotation by the angle $|J_{\partial B}^{\pm}(Z)|\neq 0$, which is equivalent to $\sgn(\det \big(F^{\mp}(\pm r,0)|F^{\pm}(\pm r,0)  \big))>0$. Conversely, $ J_{\partial B}^{\pm}(Z) < 0 $ provided that the rotation is clockwise, which is equivalent to $\sgn(\det \big(F^{\mp}(\pm r,0)|F^{\pm}(\pm r,0)  \big))<0$. Finally, $J_{\partial B}^{\pm}(Z) =0 $ provided that the angle between $F^+(\pm r,0) $ and $ F^-(\pm r,0) $ is zero, which is equivalent to  $\det ( F^+(\pm r,0) | F^-(\pm r,0) )=0$. It is important to note that an angle of $ \pi $ between these vectors (another case where $\det ( F^+(\pm r,0) | F^-(\pm r,0) )=0$) is impossible, as it would imply the existence of pseudo-equilibria (see Definition \ref{def:sing}) on $\partial B$. Analytically, we have that
\[
J_{\partial B}^{\pm}(Z)=\pm\sgn(\det \big( F^-(\pm r,0)|F^+(\pm r,0)  \big))\arccos\left(\dfrac{\big\langle F^+(\pm r,0) ,F^-(\pm r,0) \big\rangle}{\|F^+(\pm r,0) \|\,\|F^-(\pm r,0)\| }\right).
\]
The above alternative formula for $J_{\partial B}^{\pm}(Z)$ is proven in Proposition \ref{prop:equiJ}.
\end{remark}
		
The above definition is based on the fact that the index given by \eqref{eq:index} is invariant under Sotomayor-Texeira regularization (ST-regularization) \cite{ST}, see Theorem \ref{thm:invreg} below. Roughly speaking, a regularization of a  piecewise smooth vector field $Z$ is a 1-parameter family $Z_{\e}$ of $C^l$, $l\geq 0$, vector fields such that $Z_{\e}$ converges to $Z$ when $\e\to 0$. The ST-regularization is defined by
\begin{equation}\label{regula}
Z_{\e}(\x)=\dfrac{1+\phi_{\e}\circ f(\x)}{2}F^+(\x)+\dfrac{1-\phi_{\e}\circ f(\x)}{2}F^-(\x),\,\,\text{being}\,\, \phi_{\e}(s):=\phi(s/\e),
\end{equation}
where $\phi:\R\to\R$ is a $C^l$, $l\geq 0$, function which is $\CC^1$ for $s\in(-1,1)$  satisfying $\phi(s)=\sgn(s)$ for $|s|\geq1$ and $\phi'(s)>0$ for $s\in(-1,1)$. We call $\phi$ a {\it monotonic transition function}. In this paper we will always consider smooth monotonic transition function, i.e. $r\geq 1$. It is worth noting that there are other methods for regularizing piecewise smooth vector fields, such as using non-monotonic transition functions as discussed in \cite{SMN18, NovJef15}. However, the ST-regularization has a unique connection with Filippov’s convention. As demonstrated in \cite{TexeiraSilva12}, the ST-regularization of Filippov vector fields leads to singular perturbation problems, where the resulting reduced dynamics are conjugated to the sliding dynamics \eqref{slid}.

\begin{remark}\label{globalreg}
Originally, the ST-regularization was introduced for Filippov vector fields defined on Euclidean spaces. In \cite{panazzolo2017regularization}, the ST-regularization was extended to Filippov vector fields defined on a smooth manifold $M$ with $1$-dimensional smooth switching manifold $N\subset M$, by  considering oriented 1-foliations on $M\setminus N$ (see \cite[Section 3.1]{panazzolo2017regularization}). We shall call such an extension by {\bf global ST-regularization}. Roughly speaking, a global ST-regularization of the Filippov vector field $\CZ$ given by \eqref{dds} is a 1-parameter family $\CZ_{\e}$ of smooth vector fields defined on $M$ for which there exists an atlas $\mathcal{A}=\{(U_{\alpha},\Phi_{\alpha}):\,\alpha\}$ satisfying that if $U_{\alpha}\cap N\neq\emptyset$, then the pushforward $(\Phi_{\alpha})_*\CZ_{\e}$ writes like \eqref{regula}. 
\end{remark}

In the sequel, we state the main result of this section  that provides the invariance  of Definition \ref{indexfilippov} under ST- regularization.

\begin{theorem}\label{thm:invreg}
Let $Z$ be the Filippov vector field given by \eqref{locdds}, $Z_{\e}$ its ST-regularization \eqref{regula}, and $B\subset D$ a closed ball such that $\p B$ does not contain any singularities of $Z$. Then, for $\e>0$ sufficiently small, $\p B$ does not contain any singularities of $Z_\e$ and $I_{\partial B}(Z)=I_{\partial B}(Z_{\e})$.
\end{theorem}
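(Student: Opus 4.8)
The plan is to reduce the invariance statement to a local computation on the arc $\partial B\cap\Sigma$, since away from $\Sigma$ the regularized field $Z_\e$ coincides with $F^+$ on $\Sigma^+$ and with $F^-$ on $\Sigma^-$ (outside the $\e$-strip $|f|<\e$), so the contribution of those arcs to the winding number is unchanged. First I would recall that for a $C^1$ vector field $X$ on a neighborhood of $\partial B$ with no zeros on $\partial B$, its index $I_{\partial B}(X)=\frac{1}{2\pi}\oint_{\partial B} X^*\omega_W$ is the degree of $X/\|X\|:\partial B\to\mathbb S^1$; this is the classical fact recalled in the Appendix. So $I_{\partial B}(Z_\e)$ is a genuine winding number, and I want to show that as $\e\to0$ this winding number stabilizes at the value prescribed by Definition \ref{indexfilippov}. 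The key point is that for $\e$ small the strip $\{|f(\x)|<\e\}\cap\partial B$ consists of two small arcs, one near $(r,0)$ and one near $(-r,0)$ (after possibly shrinking $r$ and using that $\Sigma=\{y=0\}$ meets $\partial B$ transversally at exactly those two points); outside these two arcs $Z_\e$ literally equals $F^\pm$, contributing $\int_{\Gamma^+}\omega_W+\int_{\Gamma^-}\omega_W$ to $2\pi I_{\partial B}(Z_\e)$ in the limit. It remains to evaluate the limit, as $\e\to0$, of the angular variation of $Z_\e/\|Z_\e\|$ across each of the two transition arcs, and to show it equals $J^{\pm}(Z)$.

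The core computation is therefore the following. Near $(r,0)$, parametrize $\partial B$ so that $f$ (equivalently $y$, up to a positive factor) runs monotonically through $0$; writing $\phi_\e\circ f=\lambda\in[-1,1]$ as the effective parameter, we have $Z_\e=\frac{1+\lambda}{2}F^++\frac{1-\lambda}{2}F^-$, and as $\e\to0$ the point $\x$ collapses to $(r,0)$ while $\lambda$ sweeps monotonically from $-1$ to $1$ (or $1$ to $-1$, depending on orientation). Thus the limiting angular variation across this arc is exactly the variation of the argument of the segment $t\mapsto \frac{1+t}{2}F^+(r,0)+\frac{1-t}{2}F^-(r,0)$, $t\in[-1,1]$, i.e.\ of the straight segment in $\mathbb R^2$ joining $F^-(r,0)$ to $F^+(r,0)$. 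When $\det(F^+(r,0)\mid F^-(r,0))\neq 0$ this segment does not pass through the origin (the two endpoints are linearly independent, hence so is every strict convex combination), so the argument change is well-defined and finite; a direct trigonometric identity shows it equals
\[
\tan^{-1}\!\big(H_{(F^+,F^-)}(r,0)\big)-\tan^{-1}\!\big(H_{(F^-,F^+)}(r,0)\big),
\]
which is precisely $J^+(Z)$ — this is exactly what the function $H_{(A,B)}$ in \eqref{HAB} is engineered to encode. The analogous statement at $(-r,0)$ gives $J^-(Z)$. When $\det(F^+(\pm r,0)\mid F^-(\pm r,0))=0$, the two vectors are parallel; one checks that in this (non-generic on $\partial B$) case the limiting contribution of the transition arc is $0$, matching the definition of $J^\pm$. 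Adding up, $2\pi I_{\partial B}(Z_\e)\to J(Z)+\int_{\Gamma^+}\omega_W+\int_{\Gamma^-}\omega_W=2\pi I_{\partial B}(Z)$, and since $I_{\partial B}(Z_\e)$ is an integer multiple of nothing in particular but is continuous in $\e$ and has a limit, it is in fact constant for small $\e$; this also yields the claim that $\partial B$ contains no singularities of $Z_\e$ for small $\e$ (a zero of $Z_\e$ on $\partial B$ would force $\frac{1+\lambda}{2}F^++\frac{1-\lambda}{2}F^-=0$ at a point near $(\pm r,0)$, i.e.\ the segment through the origin, contradicting $\det\neq0$; the degenerate case needs the separate observation that $F^\pm$ have no zeros on $\partial B$ since those are singularities of $Z$).

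I expect the main obstacle to be the careful handling of the two subtleties in the transition-arc limit: first, making precise that the arc $\{|f|<\e\}\cap\partial B$ near $(\pm r,0)$ shrinks to the point while the composite parameter $\phi_\e\circ f$ sweeps the \emph{entire} interval $[-1,1]$ monotonically — this uses $\phi'(s)>0$ on $(-1,1)$, $\phi(s)=\sgn s$ for $|s|\ge1$, and transversality of $\Sigma$ with $\partial B$ — together with a uniform bound showing that the error from evaluating $F^\pm$ at $\x$ rather than at $(\pm r,0)$ vanishes (here I would use continuity of $F^\pm$ and compactness, plus the non-vanishing of the relevant convex combinations to keep the argument function Lipschitz along the path); and second, the degenerate case $\det(F^+(\pm r,0)\mid F^-(\pm r,0))=0$, where the straight segment from $F^-$ to $F^+$ lies on a line through neither endpoint's span being $2$-dimensional — one must argue the net argument change around it is $0$, which follows because the segment stays on one side of (or along) a fixed ray and hence has zero winding contribution, matching $J^\pm(Z)=0$. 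Everything else — splitting $\partial B$ into the complementary arcs, identifying the non-transition contribution with $\int_{\Gamma^\pm}\omega_W$, and the final trigonometric identity relating the segment's argument change to the $\tan^{-1}\circ H$ expression — is routine and I would not grind through it here.
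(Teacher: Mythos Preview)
Your proposal is correct and follows essentially the same route as the paper: split $\partial B$ into the arcs outside the band $\{|f|<\e\}$ (which contribute $\int_{\Gamma^\pm}\omega_W$ in the limit) and the two short transition arcs near $(\pm r,0)$, then reparametrize each transition arc by $\lambda=\phi_\e\circ f\in[-1,1]$ so that the limiting image curve is the straight segment from $F^\mp(\pm r,0)$ to $F^\pm(\pm r,0)$, whose argument variation is exactly $J^\pm(Z)$. The paper carries out this last step by an explicit chain of substitutions and integral evaluations rather than invoking the segment picture directly, and it cites Lemma~\ref{analucia1} for the non-vanishing of $Z_\e$ on $\partial B$ instead of your direct convex-combination argument (for which, in the degenerate $\det=0$ case, you should note that anti-parallel $F^+,F^-$ would force a pseudo-equilibrium or tangential singularity on $\partial B$, so the segment indeed avoids the origin).
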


Theorem \ref{thm:invreg} is proven in Section \ref{sectiom:invariaceunderreg}.

\begin{remark}\label{invar-f-phi}
Theorem \eqref{thm:invreg} provides that the index of $Z_{\e}$, the ST-regularization of $Z$, along $\p B$ is invariant under the choice of the function $f$ that describes $\Sigma$ and under the choice of the transition function $\phi$.
\end{remark}

Now, we define the index of an isolated singularity of a planar Filippov vector field (see Definition \ref{def:sing}). 
                
\begin{definition}\label{indexp}
Let $Z$ be the  Filippov vector field given by \eqref{locdds},  $\x_0$ an isolated singularity of $Z$, and $r>0$ such that $\x_0$ is the unique singularity in $B= B_{r}(\x_0)\subset D.$ The index of $Z$ at $\x_0$ is defined as $I_{\x_0}(Z):=I_{\partial B}(Z).$ 
\end{definition}

The following lemma, together with Theorem \ref{thm:invreg}, will provide that the above index of a Filippov vector field at an isolated singularity is well defined. 
\begin{lemma}[{\cite[Proposition 6]{sotomayor2002structurally}}]\label{analucia1}
Let $Z$ be the Filippov vector field given by \eqref{locdds},  $Z_{\e}$ its ST-regularization \eqref{regula}, and  $\x_0$ a regular point of $Z$. Then, there exist a neighborhood $V_{\x_0}$ of $\x_0$ and $\e_{\x_0}>0$ such that $0\notin Z_{\e}(V_{\x_0})$ for every $\e\in(0,\e_{\x_0})$.
\end{lemma}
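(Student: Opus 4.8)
The plan is to work in the local coordinates of \eqref{locdds}, where $Z = (F^+,F^-)_f$ with $f(x,y)=y$ (by Remark \ref{def-f} the regularization depends on $f$, but near a regular point we may take this normal form, or simply carry a general $f$ with $0$ a regular value). The ST-regularization \eqref{regula} reads $Z_\e(\x) = \tfrac12(1+\phi_\e(f(\x)))F^+(\x) + \tfrac12(1-\phi_\e(f(\x)))F^-(\x)$, i.e. $Z_\e(\x)$ is a convex combination $\lambda F^+(\x) + (1-\lambda)F^-(\x)$ with $\lambda = \lambda_\e(\x) = \tfrac12(1+\phi_\e(f(\x))) \in [0,1]$. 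The key observation is that $Z_\e(\x)=0$ forces $F^+(\x)$ and $F^-(\x)$ to be anti-parallel (or one of them to vanish), and the coefficient $\lambda$ is pinned down by this. I would split into cases according to where $\x_0$ sits relative to $\Sigma$.

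First, the easy cases. If $\x_0 \in \Sigma^+ \setminus \Sigma$ (interior of the region $f>0$), then since $\phi(s)=\sgn(s)=1$ for $s\ge 1$, for $\e$ small enough and $V_{\x_0}$ a small ball around $\x_0$ contained in $\{f \ge \e\}$, we have $\phi_\e \circ f \equiv 1$ on $V_{\x_0}$, hence $Z_\e \equiv F^+$ there; since $\x_0$ is a regular point, $F^+(\x_0)\neq 0$, and by continuity $0 \notin F^+(V_{\x_0})$ after shrinking $V_{\x_0}$. The case $\x_0 \in \Sigma^- \setminus \Sigma$ is symmetric. If $\x_0 \in \Sigma$ but $F^+(\x_0)$ and $F^-(\x_0)$ are linearly independent, then by continuity there is a neighborhood $V_{\x_0}$ on which every convex combination $\lambda F^+(\x) + (1-\lambda)F^-(\x)$, $\lambda\in[0,1]$, is nonzero (the segment joining two linearly independent vectors misses the origin, and this is an open condition); since $Z_\e(\x)$ is such a combination for all $\e$, we are done with $\e_{\x_0}$ arbitrary.

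The remaining case is $\x_0 \in \Sigma$ with $F^+(\x_0), F^-(\x_0)$ linearly dependent; regularity of $\x_0$ (Definition \ref{def:sing}) then means $\x_0 \in \Sigma^c \cup \Sigma^s \cup \Sigma^e$ with $F^\pm f(\x_0)\neq 0$, and in particular $F^+(\x_0) \neq 0 \neq F^-(\x_0)$. Here is the heart of the argument: on $\Sigma$ near $\x_0$, $F^\pm$ are nonzero, so if $Z_\e(\x)=0$ with $\x$ near $\x_0$ we need $F^-(\x) = -\mu F^+(\x)$ for some $\mu>0$ (genuine anti-parallelism, since $\lambda$ and $1-\lambda$ must both be positive to cancel two nonzero vectors), and then $\lambda_\e(\x) = \mu/(1+\mu) \in (0,1)$, forcing $\phi_\e(f(\x)) \in (-1,1)$, i.e. $|f(\x)| < \e$. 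So any zero of $Z_\e$ near $\x_0$ must lie in the thin strip $|f| < \e$. Now I use $F^\pm f(\x_0) \neq 0$: the anti-parallel condition $F^-(\x) \parallel F^+(\x)$ cannot hold near $\x_0$, because $F^+ f(\x_0)$ and $F^- f(\x_0) = \langle \nabla f, F^-\rangle$ have the same sign (crossing region) or strictly opposite signs (sliding/escaping), but in either case $F^+(\x_0)$ and $F^-(\x_0)$ are \emph{not} negatively proportional — in the crossing case $F^+f$ and $F^-f$ have the same sign so $F^-(\x_0)=-\mu F^+(\x_0)$ with $\mu>0$ is impossible; in the sliding/escaping case one checks directly that the only way $F^\pm$ are anti-parallel and the regularized field vanishes is excluded by $Z^s(\x_0)\neq 0$, which is part of regularity. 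Quantitatively, the function $\x \mapsto \det(F^+(\x)\mid F^-(\x))$ together with the sign data gives a uniform lower bound: there is $\rho>0$ and $c>0$ with $\|Z_\e(\x)\| \ge c$ for all $\x \in B_\rho(\x_0)$ and all small $\e$ — this follows by a compactness argument on the (compact) range of $\lambda \in [0,1]$ crossed with $\x \in \overline{B_\rho(\x_0)}$, using that $\lambda F^+(\x_0) + (1-\lambda)F^-(\x_0) = 0$ has no solution $\lambda\in[0,1]$.

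The main obstacle is the bookkeeping in this last case: one must verify carefully that regularity of $\x_0$ (in each of the subcases of Definition \ref{def:sing}, case (c) being excluded) really does rule out $0$ being a convex combination of $F^+(\x_0)$ and $F^-(\x_0)$, and then upgrade this pointwise non-vanishing to a uniform estimate on a neighborhood robust in $\e$. The clean way is to note that $\{\lambda A + (1-\lambda)B : \lambda\in[0,1]\}$ is the line segment $[B,A]$; it contains $0$ iff $A$ and $B$ point in opposite directions (or one vanishes), i.e. iff $\det(A\mid B)=0$ and $\langle A,B\rangle \le 0$. So the obstruction ``$0 \in [F^-(\x_0),F^+(\x_0)]$'' is exactly $\det(F^+(\x_0)\mid F^-(\x_0))=0$ \emph{and} $\langle F^+(\x_0),F^-(\x_0)\rangle \le 0$; one then shows this cannot happen at a regular $\x_0 \in \Sigma$ by a short sign analysis of $F^\pm f(\x_0)$ (and, for the equality-of-signs crossing subcase, directly; for the opposite-sign sliding subcase, via $Z^s(\x_0) \neq 0$, since $Z^s(\x_0)$ is, up to a positive factor, $F^-f(\x_0)F^+(\x_0) - F^+f(\x_0)F^-(\x_0)$, which vanishing forces precisely the forbidden proportionality). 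Once the pointwise statement is in hand, continuity of $(\x,\lambda)\mapsto \lambda F^+(\x)+(1-\lambda)F^-(\x)$ on the compact set $\overline{B_\rho(\x_0)}\times[0,1]$ and the fact that $Z_\e(\x)$ always lies in this family give the neighborhood $V_{\x_0} = B_\rho(\x_0)$ and $\e_{\x_0}$ (any positive number works here), completing the proof.
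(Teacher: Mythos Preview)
The paper does not supply its own proof of this lemma: it is quoted verbatim from \cite[Proposition 6]{sotomayor2002structurally} and used as a black box throughout Sections~\ref{sec:def}--\ref{sec:PHTthmFVF}. So there is no in-paper argument to compare your proposal against.

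That said, your proof sketch is correct and is essentially the natural one. The unifying observation --- that $Z_\e(\x)$ always lies on the segment $[F^-(\x),F^+(\x)]$, so it suffices to show $0\notin[F^-(\x_0),F^+(\x_0)]$ at a regular $\x_0\in\Sigma$ and then invoke continuity/compactness on $\overline{B_\rho(\x_0)}\times[0,1]$ --- is clean and handles all the $\Sigma$-cases at once. Your sign analysis is right: at a regular $\x_0\in\Sigma$ both $F^\pm f(\x_0)$ are nonzero, so linear dependence $F^-(\x_0)=\mu F^+(\x_0)$ forces $\mu=F^-f(\x_0)/F^+f(\x_0)$; in the crossing region $\mu>0$ and the segment misses the origin, while in the sliding/escaping region $\mu<0$ gives anti-parallelism, but then $F^-f\,F^+ - F^+f\,F^- = \mu F^+f\,F^+ - F^+f\,\mu F^+ = 0$, i.e.\ $Z^s(\x_0)=0$, contradicting regularity. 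One small wording issue: in the off-$\Sigma$ case you should fix $V_{\x_0}$ first (a ball on which $|f|\ge\delta>0$) and then take $\e_{\x_0}=\delta$, rather than phrasing it as ``$V_{\x_0}$ contained in $\{f\ge\e\}$'', which reads as if $V_{\x_0}$ depends on $\e$.
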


Lemma \ref{analucia1} will also be a key result in Section \ref{sec:prop} to establishing some properties for the index of Filippov vector fields.

The next result shows that the index $I_{\x_0}(Z)$ does not depend on the radius $r$ of the ball $B_{r}(\x_0)\subset D$, as long as $\x_0$ is the unique singularity in $B_{r}(\x_0)$. This ensures that the index of a Filippov vector field at a singularity is well defined.
 
\begin{proposition}\label{prop:invrad}
Let $Z$ be the Filippov vector field given by \eqref{locdds},  $\x_0$ an isolated singularity of $Z$, and $r_1>r_0>0$ such that $x_0$ is the unique singularity of $Z$ inside $B_1= B_{r_1}(\x_0)\subset D$. Then, $I_{\partial B_0}(Z)=I_{\partial B_1}(Z),$ where $B_0=B_{r_0}(\x_0).$
\end{proposition}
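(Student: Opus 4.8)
The plan is to deduce the radius independence from the invariance of the index under ST-regularization (Theorem \ref{thm:invreg}) combined with the classical homotopy invariance of the index of smooth vector fields. First I would observe that $B_0\subset B_1\subset D$ and that, since $r_1>r_0>0$, the common center $\x_0$ lies on neither $\partial B_0$ nor $\partial B_1$; as $\x_0$ is by hypothesis the only singularity of $Z$ inside $B_1$, both circles $\partial B_0$ and $\partial B_1$ are free of singularities of $Z$. Hence Theorem \ref{thm:invreg} applies to each of them, producing an $\e_1>0$ such that, writing $Z_\e$ for the ST-regularization \eqref{regula} of $Z$, one has $I_{\partial B_0}(Z)=I_{\partial B_0}(Z_\e)$ and $I_{\partial B_1}(Z)=I_{\partial B_1}(Z_\e)$ for every $\e\in(0,\e_1)$, and moreover neither $\partial B_0$ nor $\partial B_1$ contains a zero of $Z_\e$.

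Next I would exhibit a single $\e$ for which the smooth vector field $Z_\e$ has no zeros on the whole closed annulus $K:=\{\x\in\R^2:\ r_0\le\|\x-\x_0\|\le r_1\}\subset D$. Since $\x_0\notin K$ and $\x_0$ is the only singularity of $Z$ in $B_1$, every point of $K$ is a regular point of $Z$; thus Lemma \ref{analucia1} supplies, for each $\x\in K$, a neighborhood $V_\x$ and a number $\e_\x>0$ with $0\notin Z_\e(V_\x)$ whenever $\e\in(0,\e_\x)$. By compactness of $K$, finitely many of the $V_\x$ already cover $K$; letting $\e_*>0$ be the minimum of $\e_1$ and the finitely many corresponding $\e_\x$, the field $Z_\e$ vanishes nowhere on $K$ for every $\e\in(0,\e_*)$.

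Finally I would fix some $\e\in(0,\e_*)$ and use that, for a smooth vector field, $I_{\partial B}(Z_\e)$ as defined by \eqref{eq:index} reduces to the usual index, i.e. the winding number of $\x\mapsto Z_\e(\x)/\|Z_\e(\x)\|$ along $\partial B$: indeed $F^+=F^-=Z_\e$ on $\Sigma$ forces $\det(F^+|F^-)=0$ there, so $J(Z_\e)=0$, while $\Gamma^+\cup\Gamma^-$ is exactly the full Gauss image of $\partial B$. Because $Z_\e$ has no zeros on $K$, the circles $\partial B_0$ and $\partial B_1$ are freely homotopic inside the open set $\{Z_\e\ne 0\}$ via the radial homotopy that slides $\partial B_0$ to $\partial B_1$ while staying in $K$; hence, by the homotopy invariance of the index recalled in the Appendix, $I_{\partial B_0}(Z_\e)=I_{\partial B_1}(Z_\e)$. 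Chaining this with the equalities of the first paragraph yields $I_{\partial B_0}(Z)=I_{\partial B_0}(Z_\e)=I_{\partial B_1}(Z_\e)=I_{\partial B_1}(Z)$, which is the claim.

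The only genuinely non-routine step is the uniform-in-$\e$ absence of zeros of $Z_\e$ on the annulus; this is precisely what the compactness argument built on Lemma \ref{analucia1} delivers, and everything else is either bookkeeping or standard index theory for smooth vector fields.
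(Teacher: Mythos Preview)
Your proof is correct and follows essentially the same approach as the paper: both invoke Theorem \ref{thm:invreg} on the two circles, use compactness of the annulus $K=\{r_0\le\|\x-\x_0\|\le r_1\}$ together with Lemma \ref{analucia1} to obtain a single $\e^*$ for which $Z_{\e^*}$ is nonvanishing on $K$, and then apply the classical homotopy invariance (Proposition \ref{appendixpropcurvas}) to the smooth field $Z_{\e^*}$. Your additional remark that $J(Z_\e)=0$ because $F^+=F^-$ is not needed---in the paper $I_{\partial B}(Z_\e)$ already denotes the classical smooth index---but it does no harm.
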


\begin{proof}
From Theorem \ref{thm:invreg}, there exists $\ov\e>0$ such that $I_{\partial B_0}(Z)=I_{\partial B_0}(Z_{\e})$ and $I_{\partial B_1}(Z)=I_{\partial B_1}(Z_{\e})$ for every $\e\in(0,\ov\e)$.  We claim that there exists $\e^*\in (0,\ov\e)$ and a continuous deformation $B_s\subset D,$ $s\in[0,1]$, of $B_0$ into $B_1$   such that  $0\notin Z_{\e^*}(\p B_s)$ for every $s\in[0,1]$. Indeed, let $K=\{\x\in\R^2:\,r_0\leq \|\x-\x_0\|\leq r_1\}\subset D.$  Notice that $B_s$ can be chosen in such a way that $\p B_s\subset K$ for every $s\in[0,1]$. By taking into account that $K$ is compact and that $Z\big|_K$ has only regular points,  Lemma \ref{analucia1} provides $\e^*\in (0,\ov\e)$ such that $0\notin Z_{\e^*}(K)$ and the claim follows. Hence, since $Z_{\e^*}$ is a smooth vector field, it follows that $I_{\partial B_0}(Z_{\e^*})=I_{\partial B_1}(Z_{\e^*})$ (see Proposition \ref{appendixpropcurvas} from the Appendix), which implies that $I_{\partial B_1}(Z)=I_{\partial B_0}(Z).$
\end{proof}

In what follows, we extend the Definition \ref{indexp} to singularities of Filippov vector fields defined on a $2$-dimensional manifold $M$ (see Definition \ref{def:sing-mani}). 

\begin{definition}\label{indiceM} Let $\CZ$ be the Filippov vector field (defined on a $2$-dimensional compact manifold $M$) given by \eqref{dds}. Let $p_0 \in M$ be an isolated singularity of  $\CZ$ and $(U,\Phi)$ a chart of $M$ around $p_0$ such that $\Phi_* \CZ$ writes like \eqref{locdds}. The index of $\CZ$ at $p_0$ is defined as $I_{p_0}(\CZ):= I_{\Phi(p_0)}(\Phi_* \CZ).$
\end{definition}

The next result shows that the index $I_{p_0}(\CZ)$ does not depend  on the chart $(U,\Phi)$. This ensures that the index of a Filippov vector field (defined on a $2$-dimensional manifold $M$) at a singularity is well defined.

\begin{proposition}
Let $\CZ$ be the Filippov vector field given by \eqref{dds}. Let $p_0 \in M$ be an isolated singularity of  $\CZ$ and let $(U,\Phi)$ and $(V,\Psi)$ be two charts of $M$ around $p_0$ such that $\Phi(p_0)=\Psi(p_0)=\x_0$ and the pushforward vector fields $\Phi_* \CZ$ and $\Psi_* \CZ$ write like \eqref{locdds}. Then, $I_{\x_0}(\Phi_* \CZ)=I_{\x_0}(\Psi_* \CZ)$
\end{proposition}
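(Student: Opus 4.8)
The plan is to reduce the statement to the invariance of the classical index under change of coordinates for smooth vector fields, using the ST-regularization as a bridge. The key observation is that $\Phi_*\CZ$ and $\Psi_*\CZ$ both have the local form \eqref{locdds}, so by Theorem \ref{thm:invreg} their indices on a small circle around $\x_0$ agree with the indices of their respective ST-regularizations for $\e$ small. Concretely, fix $r>0$ small enough that $B=B_r(\x_0)\subset\Phi(U)\cap\Psi(V)$ contains no singularity of $\Phi_*\CZ$ other than $\x_0$ (shrinking if necessary so the analogous statement holds for $\Psi_*\CZ$ via the transition map). Let $Z:=\Phi_*\CZ$, let $\widetilde Z:=\Psi_*\CZ$, and let $h:=\Psi\circ\Phi^{-1}$ be the transition diffeomorphism on the overlap, so that $\widetilde Z = h_* Z$ near $\x_0$.

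First I would invoke Theorem \ref{thm:invreg} on both sides: there is $\ov\e>0$ with $I_{\partial B}(Z)=I_{\partial B}(Z_\e)$ for all $\e\in(0,\ov\e)$, and similarly $I_{\partial B'}(\widetilde Z)=I_{\partial B'}(\widetilde Z_\e)$ for a suitable ball $B'$ around $\x_0$ contained in $\Psi(V)$. Now $Z_\e$ is a genuine smooth vector field, and so is $\widetilde Z_\e$. The heart of the argument is to compare $h_*(Z_\e)$ with $\widetilde Z_\e$. These need not be literally equal, because the ST-regularization depends on the defining function $f$ of $\Sigma$ and on the foliation/transition data used (Remark \ref{globalreg}), and $h$ need not carry the foliation of the first chart to that of the second. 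However, both are smooth vector fields on a neighborhood of $\x_0$ that converge, as $\e\to0$, to the same Filippov field $\widetilde Z=h_*Z$ in the sense that they agree off an $O(\e)$-neighborhood of $\Sigma':=h(\Sigma\cap B)$; in particular, for $\e$ small neither vanishes on $\partial B'$ (using Lemma \ref{analucia1} applied to the regular points of $\widetilde Z$ on the compact curve $\partial B'$, for both regularizations simultaneously). Then the classical index along $\partial B'$ is computed by the winding number of the vector field restricted to $\partial B'$, and a straight-line homotopy (or, if that fails because of sign changes, a homotopy through the convex combinations used in defining the Filippov sliding vector field) between $h_*(Z_\e)$ and $\widetilde Z_\e$ shows $I_{\partial B'}(h_*Z_\e)=I_{\partial B'}(\widetilde Z_\e)$; I would cite Remark \ref{invar-f-phi} here, which already records that the regularized index does not depend on the choices of $f$ and $\phi$, and note that the foliation choice is subsumed in the same kind of argument. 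Finally, invariance of the classical index of a smooth vector field under the $C^1$ change of coordinates $h$ (Proposition from the Appendix) gives $I_{\partial B}(Z_\e)=I_{h(\partial B)}(h_*Z_\e)$, and a further application of Proposition \ref{prop:invrad} (radius-independence) lets me replace $h(\partial B)$ by $\partial B'$. Chaining the equalities yields
\[
I_{\x_0}(\Phi_*\CZ)=I_{\partial B}(Z)=I_{\partial B}(Z_\e)=I_{h(\partial B)}(h_*Z_\e)=I_{\partial B'}(\widetilde Z_\e)=I_{\partial B'}(\widetilde Z)=I_{\x_0}(\Psi_*\CZ).
\]

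I expect the main obstacle to be the middle step: showing $I_{\partial B'}(h_*Z_\e)=I_{\partial B'}(\widetilde Z_\e)$ despite $h_*Z_\e$ and $\widetilde Z_\e$ being different smooth regularizations of the same Filippov field. The clean way to handle this is to argue that \emph{any} two ST-regularizations (for possibly different $f$, $\phi$, and foliation) of a fixed Filippov field give the same index on a circle avoiding singularities — this is essentially the content of Remark \ref{invar-f-phi} combined with Theorem \ref{thm:invreg}, since each regularized index equals the Filippov index $I_{\partial B'}(\widetilde Z)$. So in fact the cleanest write-up bypasses the homotopy entirely: apply Theorem \ref{thm:invreg} to $\widetilde Z$ directly (it is a Filippov field of the form \eqref{locdds} in the $\Psi$-chart), apply it to $Z$, and use the Appendix's coordinate-invariance of the classical index for the smooth field $Z_\e$ pushed forward by $h$, noting that $h_*Z_\e$ is itself a legitimate smooth vector field whose winding number along $h(\partial B)$ equals that of $Z_\e$ along $\partial B$; then identify $h_*Z_\e$ as (an) ST-regularization-type smooth approximant of $\widetilde Z$ and close the loop with Theorem \ref{thm:invreg} once more. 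The technical care needed is only in checking that $\partial B$ and its image $h(\partial B)$ meet $\Sigma$, resp. $\Sigma'$, transversally at regular points so that Lemma \ref{analucia1} applies uniformly on these compact curves.
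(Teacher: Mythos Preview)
Your overall strategy---regularize in both charts via Theorem \ref{thm:invreg}, invoke classical coordinate-invariance of the smooth index (Proposition \ref{inv-change}) for $Z_\e$, then de-regularize---is precisely the paper's approach. The difference lies in how the ``middle step'' is handled. You worry that $h_*Z_\e$ and $\widetilde Z_\e$ are a priori \emph{different} smooth regularizations of the same Filippov field $\widetilde Z$, and you propose homotopies or an appeal to Remark \ref{invar-f-phi} to bridge them. The paper sidesteps this entirely: since the index does not depend on the defining function of the switching set (Remarks \ref{def-f} and \ref{invar-f-phi}), one is free to \emph{choose} $g = f\circ h^{-1}$ in the $\Psi$-chart. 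A one-line computation then shows that with this choice $h_*Z_\e = \widetilde Z_\e$ \emph{exactly}---the pushforward of the ST-regularization is the ST-regularization of the pushforward---so there is nothing further to compare. Your instinct that Remark \ref{invar-f-phi} is the key is correct, but it enters up front as a license to pick $g$ conveniently, not as a post-hoc identification of two distinct regularizations.

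One minor correction: to replace the curve $h(\partial B)$ by a round circle $\partial B'$ you should use Proposition \ref{appendixpropcurvas} (homotopy invariance for smooth vector fields) together with Lemma \ref{analucia1} applied on a compact annulus to guarantee $\widetilde Z_{\e^*}$ does not vanish there. Proposition \ref{prop:invrad} is stated only for the Filippov index on concentric balls and does not directly apply to the non-circular curve $h(\partial B)$.
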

\begin{proof}

Denote $D_{1}=\Phi(U\cap V)$ and $D_2=\Psi\circ\Phi^{-1}(D_1)$ and consider the Filippov vector fields 
\[
Z=(F^+,F^-)_f=:\Phi_* \CZ: D_1\to\R^2\,\,\,\text{and}\,\,\, W=(G^+,G^-)_g:=(\Psi_* \CZ)|_{D_2}:D_2\to \R^2.
\]
Define the diffeomorphism $\alpha:=\Psi\circ\Phi^{-1}:D_1\to D_2$ and notice that $W=\alpha_* Z$, $G^+=\alpha_* F^+$ and $G^-=\alpha_* F^-$. It will  be convenient for us to take $g=f\circ \alpha^{-1}$ which, from Remark \ref{def-f}, can be done without loss of generality.

Also, let $r_0>\bar{r}>r_1>0$ be such that $\x_0$ is the unique singularity of $Z$ and $W$ inside $B_0=B_{r_0}(\x_0)\subset D_1,$  $\alpha(B)\subset B_0$ where $B=B_r(\x_1)$, and $B_1=B_{r_1}(\x_0)\subset B$. From Definition \ref{indexp},
\begin{equation}\label{indexespushforward}
I_{\x_0}(\Phi_* \CZ)=I_{\p B}(Z)\,\,\, \text{and}\,\,\, I_{\x_0}(\Psi_* \CZ)=I_{\p B}(W).
\end{equation}

Now, let $Z_{\e}$ and $W_{\e}$ be the ST-regularizations, given by \eqref{regula}, of $Z$ and $W$, respectively. From Theorem \ref{thm:invreg}, and taking Remark \ref{invar-f-phi} into account, there exists $\bar{\e}>0$ such that $Z_{\e}$ and $W_{\e}$ do not vanish on $\p B$ and
\begin{equation}\label{indexesFW}
I_{\p B}(Z)=I_{\p B}(Z_{\e})\,\,\,\text{and}\,\,\,I_{\p B}(W)=I_{\p B}(W_{\e})
\end{equation}
for every $\e\in(0,\bar{\e}]$.  We claim that $W_{\e}=\alpha_* Z_{\e}$. Indeed, for $\y\in D_2$
\[
\begin{aligned}
\alpha_* Z_{\e}(\y)=&\dfrac{1+\phi_{\e}\circ f(\alpha^{-1}(\y))}{2}\alpha_*F^+(\y)+\dfrac{1-\phi_{\e}\circ f(\alpha^{-1}(\y))}{2}\alpha_*F^-(\y)\\
=&\dfrac{1+\phi_{\e}\circ g(\y)}{2}G^+(\y)+\dfrac{1-\phi_{\e}\circ g(\y)}{2}G^-(\y)\\
=&W_{\e}(\y).
\end{aligned}
\]

Finally, since $Z_{\e}$ and $W_{\e}$ are smooth vector fields, we have that
 $I_{\p B}(Z_{\e})=I_{\alpha(\p B)}(W_{\e})$, for $\e\in(0,\bar{\e}]$ (see Proposition \ref{inv-change} from the Appendix). From Lemma \ref{analucia1} and taking into account the compactness of $K=\{\x\in B:\,r_1\leq||\x-\x_0||\leq r_0\}$, we can choose $\e^*\in (0,\bar{\e})$ for which $Z_{\e^*}$ and $W_{\e^*}$ do not vanish on $K$. From the choice of $r_0,\bar{r},$ and $r_1$, we have that $\p B\subset K$ and $\alpha(\p B)\subset K$, thus they can be continuously deformed into each other without passing through a singularity. Therefore, $I_{\alpha(\p B)}(W_{\e})=I_{\p B}(W_{\e})$ (see Proposition \ref{appendixpropcurvas} from the Appendix), which implies that $I_{\p B}(Z_{\e})=I_{\p B}(W_{\e}).$ Hence, from \eqref{indexespushforward} and \eqref{indexesFW}, it follows that $I_{\x_0}(\Phi_* \CZ)=I_{\x_0}(\Psi_* \CZ)$.
\end{proof}

\section{Properties of the index for Filippov vector fields}\label{sec:prop}
In this section, we apply Theorem \ref{thm:invreg} together with Lemma \ref{analucia1} to extend the index properties of smooth vector fields (see the Appendix) to the index for Filippov vector fields established in Section \ref{sec:def}.

\begin{proposition}\label{prop:invpera}
Let  $Z$ be the Filippov vector given by \eqref{locdds} and $B\subset D$ a closed ball. If $Z$ has no singularities on $\p B$, then $I_{\partial B}(Z)\in\mathbb{Z}$.
\end{proposition}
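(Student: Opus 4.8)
The plan is to transfer the statement to the classical setting of smooth vector fields by invoking Theorem \ref{thm:invreg}. Since, by hypothesis, $Z$ has no singularities on $\partial B$, Theorem \ref{thm:invreg} provides $\bar\e>0$ such that, for every $\e\in(0,\bar\e)$, the ST-regularization $Z_\e$ has no singularities on $\partial B$ and $I_{\partial B}(Z)=I_{\partial B}(Z_\e)$. First I would fix one such $\e$, so that it only remains to check that $I_{\partial B}(Z_\e)\in\Z$.

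Next I would unwind Definition \ref{indexfilippov} for the smooth vector field $Z_\e$. Being continuous on a neighbourhood of $\partial B$ and non-vanishing there, $Z_\e$ is (trivially) the Filippov vector field with $F^+=F^-=Z_\e$; in particular $\det\big(Z_\e(\pm r,0)\,|\,Z_\e(\pm r,0)\big)=0$, hence $J(Z_\e)=0$, and the two arcs $\Gamma^+$ and $\Gamma^-$, concatenated with the orientation inherited from $\partial B$, reconstitute the whole closed image curve $Z_\e(\partial B)$ (they overlap only on the finite set $\partial B\cap\Sigma$, which does not affect the line integral). Therefore $\int_{\Gamma^+}\omega_W+\int_{\Gamma^-}\omega_W=\oint_{Z_\e(\partial B)}\omega_W$, and so $I_{\partial B}(Z_\e)$ equals the classical index of the smooth vector field $Z_\e$ along the closed curve $\partial B$, i.e.\ the winding number (topological degree) of the Gauss map $\x\mapsto Z_\e(\x)/\|Z_\e(\x)\|$ from the circle $\partial B$ to the circle. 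As a degree of a continuous map between circles, this is an integer (see the Appendix). Hence $I_{\partial B}(Z)=I_{\partial B}(Z_\e)\in\Z$.

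I do not expect a serious obstacle here: all the substance is already packed into Theorem \ref{thm:invreg} (established in Section \ref{sectiom:invariaceunderreg}) together with the classical integrality of the winding number. The only point that genuinely needs to be written out is the identification recorded in the previous paragraph, namely that Definition \ref{indexfilippov}, evaluated on a smooth (continuous, non-vanishing) vector field, reduces to the usual winding-number index; this follows at once from $J=0$ and the additivity of the line integral of $\omega_W$ over the arcs $\Gamma^\pm$.
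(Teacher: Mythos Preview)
Your proposal is correct and follows essentially the same route as the paper: invoke Theorem~\ref{thm:invreg} to identify $I_{\partial B}(Z)$ with $I_{\partial B}(Z_\e)$ for small $\e$, and then use that the latter is the classical winding number of a smooth non-vanishing vector field along $\partial B$, hence an integer. Your extra paragraph verifying that Definition~\ref{indexfilippov} collapses to the usual index when $F^+=F^-$ is a welcome sanity check that the paper leaves implicit (there $I_{\partial B}(Z_\e)$ is simply taken to mean the classical smooth index, as in \eqref{indiceZe}).
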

\begin{proof}
Let $Z_{\e}$ be the ST-regularization \eqref{regula} of $Z$. Theorem \ref{thm:invreg} implies that there exists $\ov \e>0$ such that $I_{\partial B}(Z)=I_{\partial B}(Z_{\e})$ for every $\e\in(0,\ov \e)$. Since $Z_{\e}$ is a smooth vector field, it follows that $I_{\partial B}(Z_{\e})\in\mathbb{Z}$ (see the Appendix), which implies that $I_{\partial B}(Z)\in\mathbb{Z}$.
\end{proof}

\begin{proposition}\label{prop:invperb}
Let $Z$ be the Filippov vector given by \eqref{locdds} and $B\subset D$ a closed ball. If $Z$ has no singularities on $B$, then $I_{\partial B}(Z)=0$.
\end{proposition}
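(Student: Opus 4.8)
\textbf{Proof proposal for Proposition \ref{prop:invperb}.}

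The plan is to reduce the statement to the corresponding fact for smooth vector fields by passing to the ST-regularization, exactly as in the proof of Proposition \ref{prop:invpera}. First I would invoke Theorem \ref{thm:invreg}: since $Z$ has no singularities on $\p B$, there exists $\ov\e>0$ such that $\p B$ contains no singularities of $Z_\e$ for $\e\in(0,\ov\e)$ and $I_{\p B}(Z)=I_{\p B}(Z_\e)$ for all such $\e$. The point is now to show that $I_{\p B}(Z_\e)=0$ for $\e$ small, and for this it suffices to know that the smooth vector field $Z_\e$ has no zeros on the whole disc $B$, because the classical index of a nonvanishing smooth vector field on a closed ball is $0$ (this is the degree-zero statement recorded in the Appendix).

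The key step is therefore to produce a single $\e^*\in(0,\ov\e)$ for which $Z_{\e^*}$ does not vanish anywhere on $B$. This is precisely where Lemma \ref{analucia1} enters, together with the compactness of $B$. By hypothesis every point of $B$ is a regular point of $Z$, so Lemma \ref{analucia1} gives, for each $\x\in B$, a neighborhood $V_\x$ and $\e_\x>0$ with $0\notin Z_\e(V_\x)$ for $\e\in(0,\e_\x)$. The sets $\{V_\x:\x\in B\}$ form an open cover of the compact set $B$; extracting a finite subcover $V_{\x_1},\dots,V_{\x_m}$ and setting $\e^*:=\min\{\ov\e,\e_{\x_1},\dots,\e_{\x_m}\}/2$, we get $0\notin Z_{\e^*}(B)$, i.e. $Z_{\e^*}$ is a nonvanishing smooth vector field on $B$.

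To finish, I would note that $Z_{\e^*}$ is smooth and nonvanishing on $B$ with no zeros on $\p B$, so by the classical index theory for smooth vector fields (Appendix) $I_{\p B}(Z_{\e^*})=0$; combined with $I_{\p B}(Z)=I_{\p B}(Z_{\e^*})$ from Theorem \ref{thm:invreg}, this yields $I_{\p B}(Z)=0$. The only mild subtlety to watch is making sure $\e^*$ can be chosen below $\ov\e$ simultaneously, which is handled by taking the minimum above; there is no real obstacle here beyond a careful compactness argument, and the proof is essentially a one-paragraph reduction to the smooth case.
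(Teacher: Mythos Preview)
Your proposal is correct and follows essentially the same approach as the paper's proof: invoke Theorem~\ref{thm:invreg} to pass to the regularization, use Lemma~\ref{analucia1} together with the compactness of $B$ to find $\e^*\in(0,\ov\e)$ with $Z_{\e^*}$ nonvanishing on $B$, and then apply the classical smooth result (Proposition~\ref{index0fornosingularity}) to conclude. The paper states the compactness step more tersely, while you spell out the finite-subcover argument explicitly, but the arguments are otherwise identical.
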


\begin{proof}
Let $Z_{\e}$ be the ST-regularization \eqref{regula} of $Z$. Theorem \ref{thm:invreg} implies that there exists $\ov \e>0$ such that $I_{\partial B}(Z)=I_{\partial B}(Z_{\e})$ for every $\e\in(0,\ov \e)$. By taking into account the compactness of $B$ and that $Z\big|_{B}$ has only regular points,  Lemma \ref{analucia1} implies that there exists $\e^*\in(0,\ov \e)$ such that $Z_{\e^*}\big|_{B}$ has only regular points. Since $Z_{\e^*}$ is a smooth vector field, it follows that $I_{\partial B}(Z_{\e^*})=0$ (see Proposition \ref{index0fornosingularity} from the Appendix), which implies that $I_{\partial B}(Z)=0$.
\end{proof}

\begin{proposition}\label{prop:invperc}
Let $Z$ be the Filippov vector field given by \eqref{locdds} and $B\subset D$ a closed ball. Assume that $Z$ has no singularities on $\p B$. Then,
$$\displaystyle\min_{\la\in[0,1]}\|(1-\la)F^+(\x)+\la F^-(\x)\|>0, \,\, \text{for every $\x\in \p B\cap\Sigma$}.$$
In addition, assume that the Filippov vector field  $\widetilde Z=(\widetilde F^+,\widetilde F^-)$ (defined on $D$ and given as \eqref{locdds}),
satisfies
\begin{itemize} 
\item $\|Z(\x)-\widetilde Z(\x)\|<\|Z(\x)\|,$ for every $\x\in \p B\setminus\Sigma$,
\item $|F^{\pm}f(\x)-\widetilde F^{\pm}f(\x)|<|F^{\pm}f(\x)|,$ for every $\x\in \p B\cap\Sigma$,
\item   $|Z^s(\x)-\widetilde Z^s(\x)|<|Z^s(\x)|,$ for every $\x\in \p B\cap\Sigma^s$, and
 \item $\displaystyle\|F^{\pm}(\x)-\widetilde F^{\pm}(\x)\|<\dfrac{1}{2}\min_{\la\in[0,1]}\|(1-\la)F^+(\x)+\la F^-(\x)\|,$ for every $\x\in \p B\cap\Sigma$.
\end{itemize} 
Then, $\widetilde Z$ has no singularities on $\p B$ and $I_{\partial B}(Z)=I_{\partial B}(\widetilde Z)$.
\end{proposition}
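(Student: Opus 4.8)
The plan is to reduce everything to the smooth case via the ST-regularization and Theorem~\ref{thm:invreg}, exactly as in the proofs of Propositions \ref{prop:invpera}--\ref{prop:invperb}. First, for the opening assertion that $\min_{\la\in[0,1]}\|(1-\la)F^+(\x)+\la F^-(\x)\|>0$ on $\p B\cap\Sigma$: if this minimum were $0$ at some $\x\in\p B\cap\Sigma$, then $F^+(\x)$ and $F^-(\x)$ would be antiparallel (or one of them would vanish); in the first case the point lies in $\Sigma^s\cup\Sigma^e$ (the convex combination passing through $0$ forces $F^+f$ and $F^-f$ to have opposite signs, unless one is zero) and one checks $Z^s(\x)=0$ there, so $\x$ is a pseudo-equilibrium; in the degenerate subcases ($F^\pm(\x)=0$, or $F^\pm f(\x)=0$) it is a boundary equilibrium or a tangential singularity. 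Any of these contradicts the hypothesis that $\p B$ contains no singularities of $Z$. This is a short case analysis using Definition~\ref{def:sing}.

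Next, I would show $\widetilde Z$ has no singularities on $\p B$. Split $\p B$ into $\p B\setminus\Sigma$ and $\p B\cap\Sigma$. On $\p B\setminus\Sigma$, the first bullet $\|Z(\x)-\widetilde Z(\x)\|<\|Z(\x)\|$ immediately forces $\widetilde Z(\x)\neq 0$, ruling out case (a) of Definition~\ref{def:sing}. On $\p B\cap\Sigma$: the second bullet $|F^\pm f(\x)-\widetilde F^\pm f(\x)|<|F^\pm f(\x)|$ gives $\widetilde F^\pm f(\x)\neq 0$ and, moreover, $\sgn(\widetilde F^\pm f(\x))=\sgn(F^\pm f(\x))$, so $\x$ is not a tangential singularity (case (c)) and the local structure $\Sigma^c,\Sigma^s,\Sigma^e$ of $\widetilde Z$ agrees with that of $Z$ near each such $\x$; in particular $\x$ cannot be a boundary equilibrium since the fourth bullet together with the first assertion gives $\|\widetilde F^\pm(\x)\|>0$. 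Finally, on $\p B\cap\Sigma^s$ (and symmetrically $\Sigma^e$) the third bullet $|Z^s(\x)-\widetilde Z^s(\x)|<|Z^s(\x)|$ gives $\widetilde Z^s(\x)\neq 0$, excluding case (b). Hence $\widetilde Z$ has no singularities on $\p B$ and $I_{\p B}(\widetilde Z)$ is defined.

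For the equality $I_{\p B}(Z)=I_{\p B}(\widetilde Z)$, I would pass to ST-regularizations $Z_\e$ and $\widetilde Z_\e$ built with the \emph{same} transition function $\phi$ and the \emph{same} $f$ (legitimate by Remark~\ref{invar-f-phi}). By Theorem~\ref{thm:invreg} there is $\ov\e>0$ with $I_{\p B}(Z)=I_{\p B}(Z_\e)$ and $I_{\p B}(\widetilde Z)=I_{\p B}(\widetilde Z_\e)$ for $\e\in(0,\ov\e)$. Now consider the linear homotopy $Z_\e^t:=(1-t)Z_\e+t\widetilde Z_\e$, $t\in[0,1]$, which is a family of smooth vector fields; since all the smooth $Z_\e^t$ share the index-is-homotopy-invariant property on a loop not meeting zeros (Appendix, Proposition~\ref{appendixpropcurvas}), it suffices to show that for some fixed small $\e^*$ one has $Z_{\e^*}^t(\x)\neq 0$ for all $t\in[0,1]$ and all $\x\in\p B$. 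The key estimate is: for $\x\in\p B$,
\[
Z_\e^t(\x)=\dfrac{1+\phi_\e(f(\x))}{2}\big((1-t)F^+(\x)+t\widetilde F^+(\x)\big)+\dfrac{1-\phi_\e(f(\x))}{2}\big((1-t)F^-(\x)+t\widetilde F^-(\x)\big),
\]
so $Z_\e^t(\x)$ is a convex combination, with coefficients $\tfrac{1\pm\phi_\e(f(\x))}{2}$, of the two perturbed vectors $F^\pm_t:=(1-t)F^\pm+t\widetilde F^\pm$. Off a thin strip $|f(\x)|\ge\e$ one has $\phi_\e(f(\x))=\pm1$, so $Z_\e^t(\x)=F^\pm_t(\x)$, which is nonzero by the first bullet ($\|F^\pm_t(\x)-F^\pm(\x)\|=t\|\widetilde F^\pm(\x)-F^\pm(\x)\|\le\|Z(\x)-\widetilde Z(\x)\|<\|Z(\x)\|=\|F^\pm(\x)\|$, using continuity / compactness to replace $\Sigma$ by a neighborhood). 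Inside the strip, $Z_\e^t(\x)$ is a convex combination of $F^+_t(\x)$ and $F^-_t(\x)$; the fourth bullet says each $F^\pm_t(\x)$ lies within distance $\tfrac12\min_{\la}\|(1-\la)F^++\la F^-\|$ of $F^\pm(\x)$, and the first assertion guarantees that the whole segment $[F^+(\x),F^-(\x)]$ stays at distance $>0$ (indeed $>$ that min/... wait, equal to it) from the origin, so a standard triangle-inequality argument shows the perturbed segment $[F^+_t(\x),F^-_t(\x)]$ also avoids $0$; continuity in $\x$ and compactness of the (closed) strip on $\p B$ then yield a uniform lower bound, hence the same holds for all small $\e$. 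Combining the two regions gives $0\notin Z_{\e^*}^t(\p B)$ for a suitable $\e^*\in(0,\ov\e)$, so $I_{\p B}(Z_{\e^*})=I_{\p B}(\widetilde Z_{\e^*})$, and with Theorem~\ref{thm:invreg} we conclude $I_{\p B}(Z)=I_{\p B}(\widetilde Z)$.

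The main obstacle is the strip estimate: one must check carefully that a convex combination of the two perturbed vectors $F^+_t(\x),F^-_t(\x)$ never vanishes when each stays within $\tfrac12\min_\la\|(1-\la)F^++\la F^-\|$ of the unperturbed vector — this is where the precise constant $\tfrac12$ in the fourth bullet is used, via the fact that $\mathrm{dist}\big(0,[F^+_t,F^-_t]\big)\ge \mathrm{dist}\big(0,[F^+,F^-]\big)-\max_\pm\|F^\pm_t-F^\pm\|$, and then handling the $t$-dependence of the convex-combination coefficient uniformly. The rest is a routine compactness-plus-continuity packaging of the two regimes.
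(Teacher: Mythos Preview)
Your proposal is correct and follows the same strategy as the paper: the first assertion by case analysis on Definition~\ref{def:sing}, absence of singularities for $\widetilde Z$ from the four bullets, and equality of indices by passing to the ST-regularizations and comparing in the smooth category. The paper's execution of the last step is slightly more direct than your homotopy argument: it simply observes that $Z_\e(\x)$ is itself a convex combination $(1-\mu)F^+(\x)+\mu F^-(\x)$, so
\[
\|Z_\e(\x)-\widetilde Z_\e(\x)\|\le \|F^+(\x)-\widetilde F^+(\x)\|+\|F^-(\x)-\widetilde F^-(\x)\|<\min_{\la\in[0,1]}\|(1-\la)F^+(\x)+\la F^-(\x)\|\le\|Z_\e(\x)\|,
\]
and then invokes Proposition~\ref{prop:pertsmooth} directly; note that the appendix result you cite, Proposition~\ref{appendixpropcurvas}, concerns deforming the \emph{curve} rather than the vector field, so Proposition~\ref{prop:pertsmooth} (or Proposition~\ref{homotopyW} applied to the image loop) is the correct reference.
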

\begin{proof}
First, let us verify that
$$\displaystyle\min_{\la\in[0,1]}\|(1-\la)F^+(\x)+\la F^-(\x)\|>0, \,\, \text{for every $\x\in \p B\cap\Sigma$}.$$
Indeed, assume for a contradiction that there exist $\bar \x\in \partial B\cap \Sigma$ and $\bar{\lambda} \in [0,1]$ such that $\|(1-\bar{\lambda})F^+(\bar{\x})+\bar{\lambda}F^-(\bar{\x})\|=0,$ i.e. $(1-\bar{\lambda})F^+(\bar{\x})=-\bar{\lambda}F^-(\bar{\x})$. Since  $F^\pm(\bar{\x})\neq 0$, then $\bar{\lambda}\notin \{0,1\}.$ Therefore,
\begin{equation*}
\begin{array}{rll}
F^-(\bar{\x})=-\left(\dfrac{1-\bar{\lambda}}{\bar{\lambda}}\right)F^+(\bar{\x})~~
\Rightarrow ~~ \dfrac{F^-f(\bar{\x})}{F^+f(\bar{\x})}=-\left(\dfrac{1-\bar{\lambda}}{\bar{\lambda}}\right)<0,
\end{array}
\end{equation*}
This implies that $\bar \x\in\Sigma^s$ and, consequently, $\bar{\x}$ is a pseudo-equilibrium, i.e. $Z^s(\bar{\x})=0$ which contradicts the hypothesis. 

Now notice that, by hypothesis, $\|\widetilde Z(\x)\|>0$ for $\x\in \p B\setminus\Sigma,$  $|\widetilde F^{\pm}f(\x)|>0$ for $\x\in \p B\cap\Sigma$, and $|\widetilde Z^s(\x)|>0$ for $\x\in \p B\cap\Sigma^s$. This implies that $\widetilde Z$ has no singularities on $\p B$. 

Finally, let $Z_{\e}$ and $\widetilde Z_{\e}$  be the ST-regularizations of $Z$ and $\widetilde Z$, respectively.  Theorem \ref{thm:invreg} implies that there exists $\ov \e>0$ such that $I_{\partial B}(Z)=I_{\partial B}(Z_{\e})$ and $I_{\partial B}(\widetilde Z)=I_{\partial B}(\widetilde Z_{\e})$ for every $\e\in(0,\ov \e)$. In addition, from hypothesis and taking into account that $\phi(\R)\subset[-1,1]$, we get 
\begin{equation*}
\begin{array}{lll}
\|Z_{\e}(\x)-\widetilde Z_{\e}(\x)\|&=& \left\|\dfrac{1+\phi_{\e}(f(\x))}{2}\left(F^+(\x) - \widetilde F^+(\x)\right)+\dfrac{1-\phi_{\e}(f(\x))}{2}\left(F^-(\x)-\widetilde F^-(\x)\right) \right\|\vspace{0.5cm}\\
&\leq& \|F^+(\x) - \widetilde F^+(\x)\| + \|F^-(\x) - \widetilde F^-(\x)\|\vspace{0.5cm}\\
&<& \min_{\la\in[0,1]}\|(1-\la)F^+(\x)+\la F^-(\x)\|< \|Z_{\e}(\x)\|,
\end{array}
\end{equation*}
for every $\x\in \p B$ and $\e\in(0,\bar \e]$. Thus, since  $Z_{\e}$ and $\widetilde Z_{\e}$ are smooth vector fields, it follows that $I_{\partial B}(\widetilde Z_{\e})=I_{\partial B}(Z_{\e})$  for $\e\in(0,\bar \e]$ (see Proposition \ref{prop:pertsmooth} from the Appendix), which implies that $I_{\partial B}(\widetilde Z)=I_{\partial B}(Z)$.
\end{proof}

\begin{proposition}\label{prop:invperd}
Let $Z$ be the Filippov vector given by \eqref{locdds}, $B\subset D$ a closed ball, and  $Z_1(\cdot;\delta)=(F^+_1(\cdot;\delta),F^-_1(\cdot;\delta))_f$ be a continuous 1-parameter family of Filippov vector fields (defined on $D$ and given as \eqref{locdds}) such that $Z_1(\cdot;0)$ vanishes identically, that is $F^{\pm}_1(\cdot;0)=(0,0)$.  Consider the Filippov vector field $\widetilde Z(\cdot;\delta)=Z+ Z_1(\cdot;\delta)$. Then, there exists $\bar \delta>0$ such that $I_{\partial B}(Z)=I_{\partial B}(\widetilde Z(\cdot;\delta))$ for every $\delta\in (0,\bar\delta)$.
\end{proposition}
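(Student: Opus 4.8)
The plan is to deduce this from Proposition~\ref{prop:invperc}: for $\delta>0$ small enough, the Filippov vector field $\widetilde Z(\cdot;\delta)$ should satisfy all four perturbation hypotheses of that proposition relative to $Z$. As in the previous propositions, and as is needed for $I_{\partial B}(Z)$ to be defined at all, we work under the standing assumption that $Z$ has no singularities on $\partial B$. The one structural fact I would use at the outset is that $\partial B$ is compact while $\Sigma$ is the straight segment $\{y=0\}$ (see \eqref{Sigma}), so that $\partial B\cap\Sigma$ — and hence $\partial B\cap\Sigma^s$ — is a set of at most two points.

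First I would extract from the absence of singularities of $Z$ on $\partial B$ the positive lower bounds that the perturbation will ``consume'': (i) $\|Z(\x)\|\geq m_0>0$ for $\x\in\partial B\setminus\Sigma$, because $Z$ coincides with $F^+$ on the compact set $\partial B\cap\Sigma^+$ and with $F^-$ on $\partial B\cap\Sigma^-$, and these are continuous and nonvanishing there — a zero of $F^\pm$ off $\Sigma$ is a singularity of Definition~\ref{def:sing}(a), while a zero of $F^\pm$ at a point of $\partial B\cap\Sigma$ would force $F^\pm f=0$, a tangential singularity of Definition~\ref{def:sing}(c); (ii) $|F^\pm f(\x)|\geq m_1>0$ for $\x\in\partial B\cap\Sigma$ (no tangential singularities); (iii) $|Z^s(\x)|\geq m_2>0$ for $\x\in\partial B\cap\Sigma^s$ (no pseudo-equilibria), an item that is vacuous when $\partial B\cap\Sigma^s=\emptyset$; and (iv) $\mu:=\min_{\x\in\partial B\cap\Sigma}\min_{\la\in[0,1]}\|(1-\la)F^+(\x)+\la F^-(\x)\|>0$, which is precisely the inequality established in the opening lines of the proof of Proposition~\ref{prop:invperc}. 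All these minima are taken over finite (or empty) sets, so positivity of each individual value suffices.

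Next, since $(\x,\delta)\mapsto Z_1(\x;\delta)$ and its components $(\x,\delta)\mapsto F_1^\pm(\x;\delta)$ are continuous and vanish identically at $\delta=0$, uniform continuity of $Z_1$ on $\partial B$ times a compact parameter interval around $0$ gives, for any prescribed $\eta>0$, a number $\bar\delta>0$ with $\|Z_1(\x;\delta)\|<\eta$ and $\|F_1^\pm(\x;\delta)\|<\eta$ for all $\x\in\partial B$ and $\delta\in(0,\bar\delta)$. Since $\widetilde Z(\cdot;\delta)$ has the same $f$ as $Z$, one has $\widetilde Z-Z=Z_1(\cdot;\delta)$ and $\widetilde F^\pm f-F^\pm f=F_1^\pm f(\cdot;\delta)$ with $|F_1^\pm f(\x;\delta)|\leq\|\nabla f(\x)\|\,\|F_1^\pm(\x;\delta)\|$; moreover, because the denominator $F^-f-F^+f$ of \eqref{slid} is strictly positive at the finitely many points of $\partial B\cap\Sigma^s$ and the sign conditions $F^+f<0<F^-f$ there persist under small perturbations, the sliding vector field $\widetilde Z^s$ of $\widetilde Z(\cdot;\delta)$ is defined at those points with $|\widetilde Z^s(\x)-Z^s(\x)|$ bounded by a constant times $\|F_1^+(\x;\delta)\|+\|F_1^-(\x;\delta)\|$, via Lipschitz continuity of the rational expression \eqref{slid} on a compact neighborhood of those points where its denominator is bounded away from zero. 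Choosing $\eta$ small relative to $m_0,m_1,m_2,\mu$ and the ambient bounds for $\|\nabla f\|$ and the above Lipschitz constant, all four hypotheses of Proposition~\ref{prop:invperc} are met by $\widetilde Z(\cdot;\delta)$ for $\delta\in(0,\bar\delta)$; that proposition then yields that $\widetilde Z(\cdot;\delta)$ has no singularities on $\partial B$ and that $I_{\partial B}(Z)=I_{\partial B}(\widetilde Z(\cdot;\delta))$, as required.

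The only step I expect to require genuine care is item (iii) together with the third hypothesis of Proposition~\ref{prop:invperc}: one must guarantee that the sliding vector field of the perturbed field is still defined at the points of $\partial B\cap\Sigma^s$ and stays close to $Z^s$ there, which is exactly why the stability of the strict inequalities $F^+f<0<F^-f$ under small perturbations is invoked (so that $\partial B\cap\Sigma^s$ remains inside the sliding region of $\widetilde Z(\cdot;\delta)$). The other three hypotheses reduce to standard compactness-plus-continuity estimates on $\partial B$.
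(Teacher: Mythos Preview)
Your proposal is correct and follows essentially the same route as the paper: reduce to Proposition~\ref{prop:invperc} by showing that, as $\delta\to 0$, the quantities $\|Z-\widetilde Z\|$, $|F^\pm f-\widetilde F^\pm f|$, $|Z^s-\widetilde Z^s|$, and $\|F^\pm-\widetilde F^\pm\|$ all tend to zero on the relevant pieces of $\partial B$, so that its four hypotheses hold for small $\delta$. Your write-up is in fact more careful than the paper's --- you make explicit the finiteness of $\partial B\cap\Sigma$, the positivity of the lower bounds $m_0,m_1,m_2,\mu$, and the persistence of the sliding region under perturbation needed for $\widetilde Z^s$ to be defined at the points of $\partial B\cap\Sigma^s$ --- where the paper simply asserts these convergences.
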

\begin{proof}
First, notice that $\widetilde Z(\cdot,\delta)=(\widetilde F^+(\cdot,\delta),\widetilde F^-(\cdot,\delta))_f$, where $\widetilde F^{\pm}(\cdot,\delta)=F^{\pm}+F_1^{\pm}(\cdot,\delta)$. 
Since, $F_1^{\pm}(\x;\de)\to (0,0)$, uniformly for $\x\in B,$  as $\delta\to0$ we obtain that:
 $\displaystyle\|Z(\x)-\widetilde Z(\x;\delta)\|\to0$,  uniformly for $\x\in \partial B\setminus\Sigma$,  as $\delta\to0$;  $|F^{\pm}f(\x)-\widetilde F^{\pm}f(\x;\delta)|\to0$ and  $\displaystyle\|F^{\pm}(\x)-\widetilde F^{\pm}(\x;\delta)\|\to0$,  for $\x\in\partial B\cap \Sigma$,  as $\delta\to0$; and $|Z^s(\x)-\widetilde Z^s(\x;\delta)|\to0$,  for $\x\in\partial B\cap \Sigma^s$, as $\delta\to0$. Thus, taking into account that $Z$ has no singularities on $\p B$, we conclude that there exists $\bar \de>0$ for which the hypotheses of Proposition \ref{prop:invperc} hold for $\widetilde Z(\cdot;\delta)$, for $\delta\in(0,\bar \delta)$, which implies that $I_{\partial B}(Z)=I_{\partial B}(\widetilde Z(\cdot;\delta))$ for every $\delta\in (0,\bar\delta)$. 
\end{proof}

\begin{proposition}\label{prop:invpere}
Let $Z(\cdot;\la),$ $\la\in[0,1]$, be a continuous homotopy between Filippov vector fields given as \eqref{locdds} and $B\subset D$ a closed ball such that  $Z(\cdot;\la)$ has no singularities on $\p B,$ for every $\la\in[0,1]$. Then, $I_{\partial B}(Z(\cdot;\la))$ is constant on $\la\in[0,1]$.\end{proposition}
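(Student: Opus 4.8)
\textbf{Proof proposal for Proposition \ref{prop:invpere}.}

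The plan is to reduce the statement to its smooth counterpart (the homotopy invariance of the index for smooth vector fields, Proposition \ref{appendixpropcurvas} from the Appendix) by passing to ST-regularizations, exactly in the spirit of the proofs of Propositions \ref{prop:invpera}--\ref{prop:invperd}. The only genuinely new ingredient is a compactness argument in the parameter $\la$, needed because Theorem \ref{thm:invreg} and Lemma \ref{analucia1} each produce an $\e$-threshold that a priori depends on $\la$, and we must make it uniform.

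The first step is to fix an arbitrary $\la_0\in[0,1]$. Since $Z(\cdot;\la_0)$ has no singularities on $\p B$, Theorem \ref{thm:invreg} gives $\e_{\la_0}>0$ such that $I_{\p B}(Z(\cdot;\la_0))=I_{\p B}(Z_\e(\cdot;\la_0))$ for all $\e\in(0,\e_{\la_0})$, where $Z_\e(\cdot;\la)$ denotes the ST-regularization \eqref{regula} of $Z(\cdot;\la)$. Now I want to propagate this equality to nearby values of $\la$. Because the family $Z(\cdot;\la)$ is continuous in $\la$ and $\p B$ is compact with $Z(\cdot;\la_0)|_{\p B}$ nonvanishing in the relevant Filippov sense (no singularities), the smallness hypotheses of Proposition \ref{prop:invperc} hold for $Z(\cdot;\la)$ viewed as a perturbation of $Z(\cdot;\la_0)$, for all $\la$ in some open interval $J_{\la_0}\ni\la_0$; hence $I_{\p B}(Z(\cdot;\la))=I_{\p B}(Z(\cdot;\la_0))$ for $\la\in J_{\la_0}\cap[0,1]$. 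In particular $I_{\p B}(Z(\cdot;\la))$ is locally constant in $\la$.

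Now the interval $[0,1]$ is connected and covered by the open sets $J_{\la_0}$, $\la_0\in[0,1]$; a locally constant function on a connected set is constant, so $I_{\p B}(Z(\cdot;\la))$ is constant on $[0,1]$, as claimed. (Alternatively, one can extract a finite subcover $J_{\la_1},\dots,J_{\la_m}$ by compactness and chain the equalities across overlapping intervals.)

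The step I expect to be the main obstacle is verifying cleanly that continuity of the homotopy, together with the absence of singularities on $\p B$ for every fixed $\la$, really does deliver \emph{all} of the perturbation inequalities required by Proposition \ref{prop:invperc} uniformly for $\la$ near $\la_0$ --- in particular the two-sided estimates involving $F^\pm f$ on $\p B\cap\Sigma$ and $Z^s$ on $\p B\cap\Sigma^s$. This is where one must use that $\p B$ is compact and that the quantities $\|Z(\x;\la_0)\|$ (off $\Sigma$), $|F^\pm f(\x;\la_0)|$ (on $\Sigma$), $|Z^s(\x;\la_0)|$ (on $\Sigma^s$), and $\min_{\la'\in[0,1]}\|(1-\la')F^+(\x;\la_0)+\la' F^-(\x;\la_0)\|$ (on $\Sigma$) are all bounded away from $0$ on the relevant compact pieces of $\p B$ --- the last one being precisely the conclusion established in the first paragraph of the proof of Proposition \ref{prop:invperc}. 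Once these positive lower bounds are in hand, continuity of $Z(\cdot;\la)$ in $\la$ (uniformly on the compact set $\p B$) makes the perturbation hypotheses automatic for $\la$ in a small enough neighborhood of $\la_0$, and the argument closes.
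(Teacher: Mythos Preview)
Your argument is correct and follows essentially the same route as the paper: establish local constancy of $\la\mapsto I_{\p B}(Z(\cdot;\la))$ via the perturbation estimate, then conclude by connectedness of $[0,1]$. The paper streamlines the local step by citing Proposition \ref{prop:invperd} (with $Z_1(\cdot;\delta)=Z(\cdot;\la_0+\delta)-Z(\cdot;\la_0)$) rather than re-verifying the hypotheses of Proposition \ref{prop:invperc} directly as you do; since Proposition \ref{prop:invperd} is itself proved from Proposition \ref{prop:invperc}, the two arguments are the same in substance. Your opening invocation of Theorem \ref{thm:invreg} and the ST-regularization $Z_\e(\cdot;\la_0)$ is unnecessary here --- the perturbation machinery of Propositions \ref{prop:invperc}--\ref{prop:invperd} already hides the regularization, and you never use $Z_\e$ again --- so that sentence can simply be dropped.
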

\begin{proof}
For each $\la_0\in[0,1],$ by taking $Z=Z(\cdot;\la_0)$ and $Z_1(\cdot;\delta)=Z(\cdot;\la_0+\delta)-Z(\cdot;\la_0)$, Proposition \ref{prop:invperd} provides a neighborhood $J_{\la_0}\subset[0,1]$ of $\la_0$ such that $I_{\partial B}(Z(\cdot;\la))=I_{\partial B}(Z(\cdot;\la_0))$ for every $\la\in J_{\la_0}$, which implies that the map $\la\in[0,1]\mapsto I_{\partial B}(Z(\cdot,\la))$ is continuous. Since, from Proposition \ref{prop:invpera}, it is an integer-valued map, we conclude that $I_{\partial B}(Z(\cdot,\la))$
 is constant on $\la\in[0,1]$.
\end{proof}

\begin{proposition}\label{prop:somaindexsingu}
Let $Z$ be the Filippov vector field given by \eqref{locdds} and $B\subset D$ a closed ball such that $Z$ has no singularities on $\p B$. Assume that $Z$ has finitely many singularities inside $B$, $\x_1,\x_2,\ldots,\x_n$. Then, 
\[
I_{\partial B}(Z)=\sum_{i=1}^n I_{\x_i}(Z).
\]
\end{proposition}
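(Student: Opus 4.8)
The plan is to reduce the statement to the corresponding additivity property for smooth vector fields via the ST-regularization, exactly in the spirit of the proofs of Propositions \ref{prop:invrad} and \ref{prop:invperb}. First I would fix, for each singularity $\x_i$, a radius $\rho_i>0$ so small that the closed balls $B_i=B_{\rho_i}(\x_i)$ are pairwise disjoint, contained in the interior of $B$, and each $B_i$ contains $\x_i$ as its only singularity of $Z$; this is possible since the singularities are isolated and finite in number. By Definition \ref{indexp} we then have $I_{\x_i}(Z)=I_{\partial B_i}(Z)$ for each $i$.

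Next I would invoke Theorem \ref{thm:invreg} to pass to the ST-regularization $Z_\e$: there is $\ov\e>0$ so that for all $\e\in(0,\ov\e)$ the circles $\partial B$ and $\partial B_1,\dots,\partial B_n$ contain no singularities of $Z_\e$, and $I_{\partial B}(Z)=I_{\partial B}(Z_\e)$ and $I_{\partial B_i}(Z)=I_{\partial B_i}(Z_\e)$ for every $i$. Now consider the compact set $K=\ov{B\setminus(B_1\cup\cdots\cup B_n)}$; on $K$ the Filippov vector field $Z$ has only regular points, so Lemma \ref{analucia1} together with the compactness of $K$ yields $\e^*\in(0,\ov\e)$ with $0\notin Z_{\e^*}(K)$. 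Thus $Z_{\e^*}$ is a \emph{smooth} vector field defined on a neighborhood of $K$, having no zeros on $K$, and in particular no zeros on $\partial B$ nor on the $\partial B_i$.

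The final step is purely the smooth case: since $Z_{\e^*}$ is smooth and non-vanishing on $K=\ov{B\setminus\bigcup_i B_i}$, the classical additivity of the index (which follows from the smooth versions quoted in the Appendix, e.g. Propositions \ref{appendixpropcurvas} and \ref{index0fornosingularity}: deform $\partial B$ through $K$ into the disjoint union of the $\partial B_i$, or equivalently apply Stokes/Green on $K$ to the closed $1$-form pulled back by $Z_{\e^*}$) gives $I_{\partial B}(Z_{\e^*})=\sum_{i=1}^n I_{\partial B_i}(Z_{\e^*})$. Combining this with the equalities from Theorem \ref{thm:invreg} gives $I_{\partial B}(Z)=I_{\partial B}(Z_{\e^*})=\sum_{i=1}^n I_{\partial B_i}(Z_{\e^*})=\sum_{i=1}^n I_{\partial B_i}(Z)=\sum_{i=1}^n I_{\x_i}(Z)$, as desired.

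The main obstacle — if one arises — is bookkeeping the order of quantifiers: Theorem \ref{thm:invreg} supplies an $\ov\e$ for each of the finitely many circles, and Lemma \ref{analucia1} supplies an $\e_{\x}$ locally at each regular point of the compact set $K$; one must first take a common $\ov\e$ over the finitely many circles and then shrink further, using a finite subcover of $K$, to a single $\e^*$ that works simultaneously for the circle deformation and for all the estimates. Everything else is a routine appeal to the smooth-case additivity of the index recalled in the Appendix.
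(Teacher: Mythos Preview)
Your proposal is correct and follows essentially the same route as the paper: isolate the singularities in small disjoint balls, pass to the ST-regularization via Theorem \ref{thm:invreg}, use Lemma \ref{analucia1} plus compactness of $K=\ov{B\setminus\bigcup_i B_i}$ to find a single $\e^*$ for which $Z_{\e^*}$ is zero-free on $K$, and then invoke the smooth additivity of the index. The paper phrases the last step as Proposition \ref{appendixprop1} together with Proposition \ref{index0fornosingularity} (so that $I_{\partial(B\setminus\cup_i B_i)}(Z_{\e^*})=0$), which is the same content as your deformation/Stokes argument.
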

\begin{proof}
Since each singularity $\x_i,~  i\in\{1, \ldots, n\},$ is isolated, there exist $r_i, ~ i\in\{1, \ldots, n\},$ small enough, such that $\x_i$ is the unique singularity inside $B_i=B_{r_i}(\x_i)$ for $i\in\{1,\ldots, n\},$ and $B_j\cap B_i = \emptyset,$ for every $i \neq j.$ From Theorem \ref{thm:invreg} and Definition \ref{indexp}, there exists $\bar{\e}>0$ such that
\[I_{\partial B}(Z)= I_{\partial B}(Z_\e) \text{ and } I_{\x_i}(Z)=I_{\partial B_i}(Z)=I_{\partial B_i}(Z_\e) \text{ for } i\in\{1,\ldots, n\} \text{ and } \e \in (0,\bar{\e}].
\]
Since  $Z_\e$ is a smooth vector field and $B= \ov{B\setminus \cup_i B_i}\cup (\cup_i B_i)$, Proposition \ref{appendixprop1} from the Appendix provides
\[I_{\partial B}(Z)=I_{\partial B}(Z_\e)=I_{\partial(B\setminus \cup_i B_i)}(Z_\e)+ \sum_{i=1}^n I_{\partial B_i}(Z_\e)=I_{\partial (B\setminus \cup_i B_i)}(Z_\e)+ \sum_{i=1}^n I_{\x_i}(Z), \text{ for } \e \in (0,\bar{\e}]. 
\]
Now, Lemma \ref{analucia1} provides the existence of $\e^* \in (0,\bar{\e}]$ such that $Z_{\e*}$ has no singularities in $\ov{B\setminus \cup_i B_i}$.
Thus, from Proposition \ref{index0fornosingularity} from the Appendix, we have that $
I_{\partial (B\setminus \cup_i B_i)}(Z_\e^*)=0$
which implies that $$I_{\partial B}(Z)=\sum_{i=1}^n I_{\x_i}(Z).$$
\end{proof}

We conclude this section by providing an alternative formula for $J_{\partial B}^{\pm}(Z)$ (see Remark \ref{rem:equiJ}).
\begin{proposition}\label{prop:equiJ} 
Let $Z$ be the Filippov vector field given by \eqref{locdds} and  $B\subset D$ a closed ball such that $\p B$ does not contain any singularities of $Z$. Let $J_{\partial B}^{\pm}(Z)$ be given by 
Definition \ref{indexfilippov}. Then, the following relationship holds:
\begin{equation}\label{equiJ}
J_{\partial B}^{\pm}(Z)=\pm\sgn(\det \big( F^-(\pm r,0)|F^+(\pm r,0)  \big))\arccos\left(\dfrac{\big\langle F^+(\pm r,0) ,F^-(\pm r,0) \big\rangle}{\|F^+(\pm r,0) \|\,\|F^-(\pm r,0)\| }\right).
\end{equation}
	\end{proposition}
	\begin{proof} 
	 Denote $J^{\pm}=J_{\partial B}^{\pm}(Z)$. If $\det ( F^+(\pm r,0) | F^-(\pm r,0) )=0$, then $J_{\partial B}^{\pm}(Z)=0$ and, therefore, the relationship \eqref{equiJ} holds. Thus, assume that $\det ( F^+(\pm r,0) | F^-(\pm r,0) )\neq0$. In the sequel, we will proceed with the proof for  $J^-$.  The argument for  $J^+$  follows analogously. 
	 
Notice that  $J^-\in(-\pi,\pi)$ and, therefore, $|J^-|\in[0,\pi)$. In addition, it is known that $\arctan(u)+\arctan(v)\in(-\pi/2,\pi/2)$ provided that $u\,v<1$; $\arctan(u)+\arctan(v)\in\{-\pi/2,\pi/2\}$ provided that $u\,v=1$; and $\arctan(u)+\arctan(v)\in(-\pi,-\pi/2]\cup[\pi/2,\pi)$ provided that $u\,v>1$. We are going to use this information to determine the sign of $J^-$. Recall that
\[
J^-=\tan^{-1}\left(-H_{(F^+,F^-)}(\pm r,0)\right)+\tan^{-1}\left(H_{(F^-,F^+)}(\pm r,0)\right).
\]
Taking into account that $\det(u|v)^2+\langle u,v\rangle^2=\|u\|^2\|v\|^2$ for any vectors $u,v\in\R^2$, it follows that
\[
c:=-H_{(F^+,F^-)}(-r,0)H_{(F^-,F^+)}(-r,0)=1-\dfrac{|F^+-F^-|^2}{\det \big( F^+(-r,0) | F^-(-r,0) \big)^2}\big\langle F^+(-r,0) ,F^-(-r,0) \big\rangle.
\]

Also, let
	\[
	\theta^-=\arccos\left(\dfrac{\big\langle F^+(-r,0) ,F^-(-r,0) \big\rangle}{\|F^+(-r,0) \|\,\|F^-(-r,0)\| }\right)\in[0,\pi].
	\]

First, if $\big\langle F^+(-r,0) ,F^-(-r,0) \big\rangle=0$, then $c=1$ and, therefore, $|J^-|=\pi/2=\theta.$ Moreover, in this case,
\[
J^-=\tan^{-1}\left(s\right)+\tan^{-1}\left(1/s\right),\quad\text{where}\quad s=\dfrac{|F^+(-r,0)|^2}{\det \big( F^+(-r,0) | F^-(-r,0) \big)},
\]
and, consequently, $\sgn(J^-)=\sgn(s)=\sgn(\det \big( F^+(-r,0) | F^-(-r,0) \big))$. Hence, the relationship \eqref{equiJ} holds.

Now, assume that $\big\langle F^+(-r,0) ,F^-(-r,0) \big\rangle\neq0$.  Using the trigonometric identities about tangent of arccosine and tangent of the sum of arctangents, one gets that
\begin{equation}\label{tanJ}
\tan J^-=\dfrac{\det \big( F^+(-r,0) | F^-(-r,0) \big)}{\big\langle F^+(-r,0) ,F^-(-r,0) \big\rangle},\quad\text{and}\quad
\tan\theta=\dfrac{|\det \big( F^+(-r,0) | F^-(-r,0) \big)|}{\big\langle F^+(-r,0) ,F^-(-r,0) \big\rangle},
\end{equation}
which implies that $|\tan\theta|=|\tan |J^-||.$ In addition, from \eqref{tanJ}, 
\[
\sgn(\tan J^-)=\sgn\big(\det ( F^+(-r,0) | F^-(-r,0) )\big)\sgn\big(\big\langle F^+(-r,0) ,F^-(-r,0) \big\rangle\big).
\]

Hence, if $\big\langle F^+(-r,0) ,F^-(-r,0) \big\rangle>0$, then $c<1$ and, thus, $J^-\in (-\pi/2,\pi/2)$. Consequently, 
\[
\sgn(J^-)=\sgn(\tan J^-)=\sgn\big(\det ( F^+(-r,0) | F^-(-r,0) )\big).
\]
Moreover, in this case, $\tan\theta>0$, implying that $\theta\in(0,\pi/2)$ and, therefore, $|J^-|=\theta$.

Now, if $\big\langle F^+(-r,0) ,F^-(-r,0) \big\rangle<0$, then $c>1$ and, thus, $J^-\in (-\pi,-\pi/2]\cup[\pi/2,\pi)$. Consequently, 
\[
\sgn(J^-)=-\sgn(\tan J^-)=\sgn\big(\det ( F^+(-r,0) | F^-(-r,0) )\big).
\]
Moreover, in this case, $\tan\theta<0$, implying that $\theta\in(\pi/2,\pi)$ and, therefore, $|J^-|=\theta$. It concludes the proof for $J^-$. The proof for $J^+$ is analogous. \end{proof}

\section{Indices of generic $\Sigma$-singularities}\label{sec:generic}

This section is devoted to compute the indices of the generic $\Sigma$-singularities of planar Filippov vector fields. Recall, from \cite{guardia2011generic}, that a generic $\Sigma$-singularity corresponds either to hyperbolic pseudo-node, to a hyperbolic pseudo-saddle, or to a regular-fold tangential singularity. In what follows, we define precisely these singularities.

\begin{definition}\label{def:pseudo.singu}
	A  pseudo-equilibrium $p \in \Sigma$  of $Z$ is said to be {\bf hyperbolic} provided that $(Z^s)'(p)\neq  0.$ Furthermore, it is called
	\begin{itemize}
		\item {\bf pseudo-saddle} if $p\in \Sigma^s$ (resp. $p\in \Sigma^e$)  and
		$(Z^s)'(p)>  0$ (resp. $(Z^s)'(p)<  0$); 
		\item {\bf pseudo-node} if $p\in \Sigma^s$ (resp. $p\in \Sigma^e$) and $(Z^s)'(p)<0$ (resp. $(Z^s)'(p)> 0$).
	\end{itemize}
\end{definition}

\begin{definition} A regular-fold tangential singularity is a point $p \in \Sigma$ satisfying one of the following properties: 
	\begin{itemize}
		\item $F^+ f (p) = 0$ and $(F^+)^2 f (p) \neq 0$ and $F^- f (p) \neq 0$. In this case, we say that the regular-fold point is visible if $(F^+)^2 f (p) > 0$ and invisible if
		$(F^+)^2 f (p) < 0$. 
		\item $F^- f (p) = 0$ and $(F^-)^2 f (p) \neq 0$ and $F^+ f (p) \neq 0$. In this case, it is visible provided $(F^-)^2 f (p) < 0$ and invisible provided $(F^-)^2 f (p) > 0$. 
	\end{itemize}
\end{definition}

The strategy outlined in this section for calculating the indices of generic $\Sigma$-singularities involves regularizing the Filippov vector field $Z$ within a neighborhood of the singularity. This is followed by using the invariance of the index under regularization (Theorem \ref{thm:invreg}), along with the following lemmas:

\begin{lemma}[{\cite[Proposition 8]{sotomayor2002structurally}}]\label{analucia2}
	Let $p$ be a hyperbolic pseudo-equilibrium of $Z$ as given in \eqref{locdds}. There exists a neighbourhood $V$ of $p$ and $\e_0>0$ such that for $0<\e\leq \e_0,$ $Z_{\e}$ has a unique critical point $p_\e$ in  $V.$ If $p$ is a pseudo-saddle then $p_\e$ is a saddle. If $p$ is a pseudo-node then $p_\e$ is a node.
\end{lemma}

\begin{lemma}[{\cite[Proposition 9]{sotomayor2002structurally}}]\label{analucia3}
	Let $Z$ be a Filippov vector field, as given in \eqref{locdds},  $Z_{\e}$ its regularization, and  $p$ a regular-fold point of $Z$. Then, there exist a neighborhood $V_p$ of $p$ and $\e_p>0$ such that $0\notin Z_{\e}(V_p)$ for every $\e\in(0,\e_p)$.
\end{lemma}

\begin{proposition}\label{prophiperbolic}
	Let $p$ be a generic $\Sigma-$singularity of a Filippov vector field $Z.$ Then $I_p(Z)=0$ provided that $p$ is a regular-fold tangential singularity; $I_p(Z)=1$ provided that $p$ is a hyperbolic pseudo-node; and $I_p(Z)=-1$ provided that $p$ is a hyperbolic pseudo-saddle.
\end{proposition}
\begin{proof}
	Let $p \in \Sigma$ be a regular-fold. Then by Lemma \ref{analucia3}, there is a neighborhood $V$ of $p$ and $\e_0 > 0$ such that for $0 < \e \leq \e_0$, $Z_{\e}$ has no critical points in $V$. Take a neighborhood $B \subset V$ of $p,$ by Theorem \ref{thm:invreg},  $I_{\partial B}(Z)=I_{\partial B}(Z_{\e})= 0$ since $Z_{\e}$ has no singularities (see the Appendix). Then, $I_{\partial B}(Z)=I_p(Z)=0$.
	
Now let $p \in \Sigma$ be a hyperbolic pseudo-equilibrium. By Lemma \ref{analucia2}, there exists a neighborhood $V$ of $p$ and $\e_0 > 0$ such that for $0 < \e \leq \e_0$, $Z_{\e}$ has a unique critical point $p_{\e}$ near $p$ which is a hyperbolic saddle if $p$ is a pseudo-saddle for $Z^s$, or a hyperbolic node if $p$ is a pseudo-node for $Z^s$. By Theorem \ref{thm:invreg}, if $p$ is a pseudo-node, then the index is $1$ whereas if $p$ is a pseudo-saddle the index is $-1.$
\end{proof}

\section{Indices of some codimension-1  $\Sigma$-singularities}\label{sec:cod1}

This section is devoted to compute the indices of some codimension-1 $\Sigma$-singularities of planar Filippov vector fields. From \cite{guardia2011generic}, a codimension-1 $\Sigma$-singularity $p$ can be classified as:

\begin{itemize}
\item {\bf Fold-fold tangential singularity:} if $p\in \Sigma$ satisfies $F^+f(p)=F^-(p)=0$, $(F^+)^2 f(p)\neq0$, and $(F^-)^2 f(p)\neq0$.
\item {\bf Regular-cusp tangential singularity:} if $p\in \Sigma$ satisfies $F^+f(p)=(F^+)^2 f(p)=0$, $(F^+)^3 f(p)\neq 0$, and $F^-f(p)\neq0$ (or, equivalently, $F^-f(p)=(F^-)^2 f(p)=0$, $(F^-)^3 f(p)\neq 0$, and $F^+f(p)\neq0$).
\item {\bf Saddle-node pseudo-equilibrium:} if $p\in\Sigma^{s,e}$ and satisfies $Z^s(p)=(Z^s)'(p)=0$, and $(Z^s)''(p)\neq0$.
\item {\bf Codimension-1 boundary equilibrium:} if $p\in \Sigma$ is a hyperbolic singularity of $F^+$ (resp. $F^-$), $F^-f(p)\neq0$ (resp. $F^+f(p)\neq0$), and $(Z^s)'(p)\neq0$.
\end{itemize}

In this section, we will compute the indices of fold-fold tangential singularities, regular-cusp tangential singularities, and saddle-node pseudo-equilibria. The indices of boundary equilibria will be addressed in future work. Our approach involves perturbing the vector field within a small neighborhood of each singularity to either remove it or unfold it into generic singularities. We then apply the invariance of the index under small perturbations (Proposition \ref{prop:invperd}), and either Proposition \ref{prop:invperb} or the known indices of generic $\Sigma$-singularities (Proposition \ref{prophiperbolic}) along with Proposition \ref{prop:somaindexsingu}.

\subsection{Indices of fold-fold tangential singularities}
Let  $p\in\Sigma$ be a fold-fold tangential singularity of a Filippov vector field $Z=(F^+,F^-)$ and
	denote 
	\begin{equation}\label{k}
	k=\sgn(F^-_1(0,0)) \dfrac{1}{|F_1^+(0,0)|}\dfrac{\partial F_2^+}{\partial x} (0,0)-\sgn(F^+_1(0,0)) \dfrac{1}{|F_1^-(0,0)|}\dfrac{\partial F_2^-}{\partial x} (0,0).
	\end{equation}
We shall see that $k=0$ increases the codimension of the bifurcation, so that we shall assume $k\neq0$. The next proposition provides the index of $p$ depending on the configuration of the fold-fold tangential singularity (see Figure \ref{fig:dd}).
	\begin{figure}[h!]
		\centering
		\begin{subfigure}{.25\textwidth}
			\includegraphics[width=.95\linewidth]{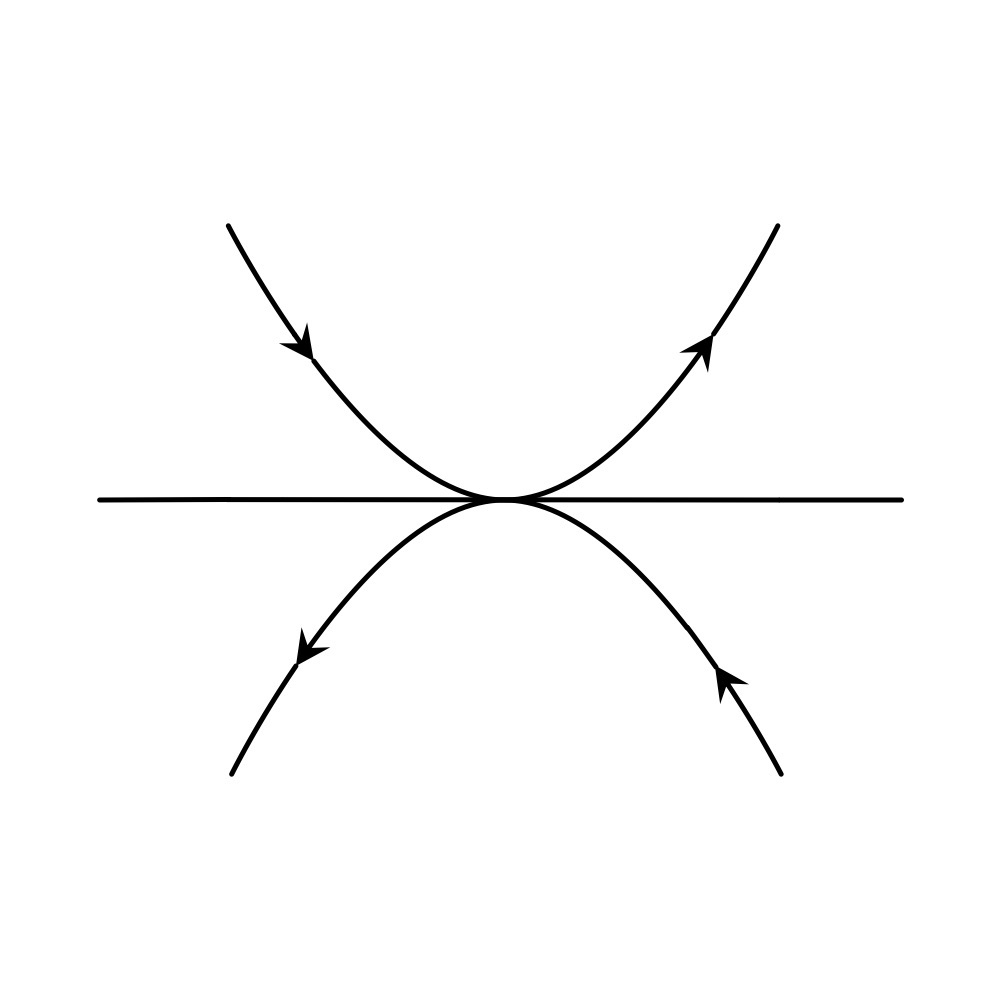}
			\caption{Index equal to $-1$}
			\label{fig:dd1}
		\end{subfigure}
		\begin{subfigure}{.25\textwidth}
			\includegraphics[width=.95\textwidth]{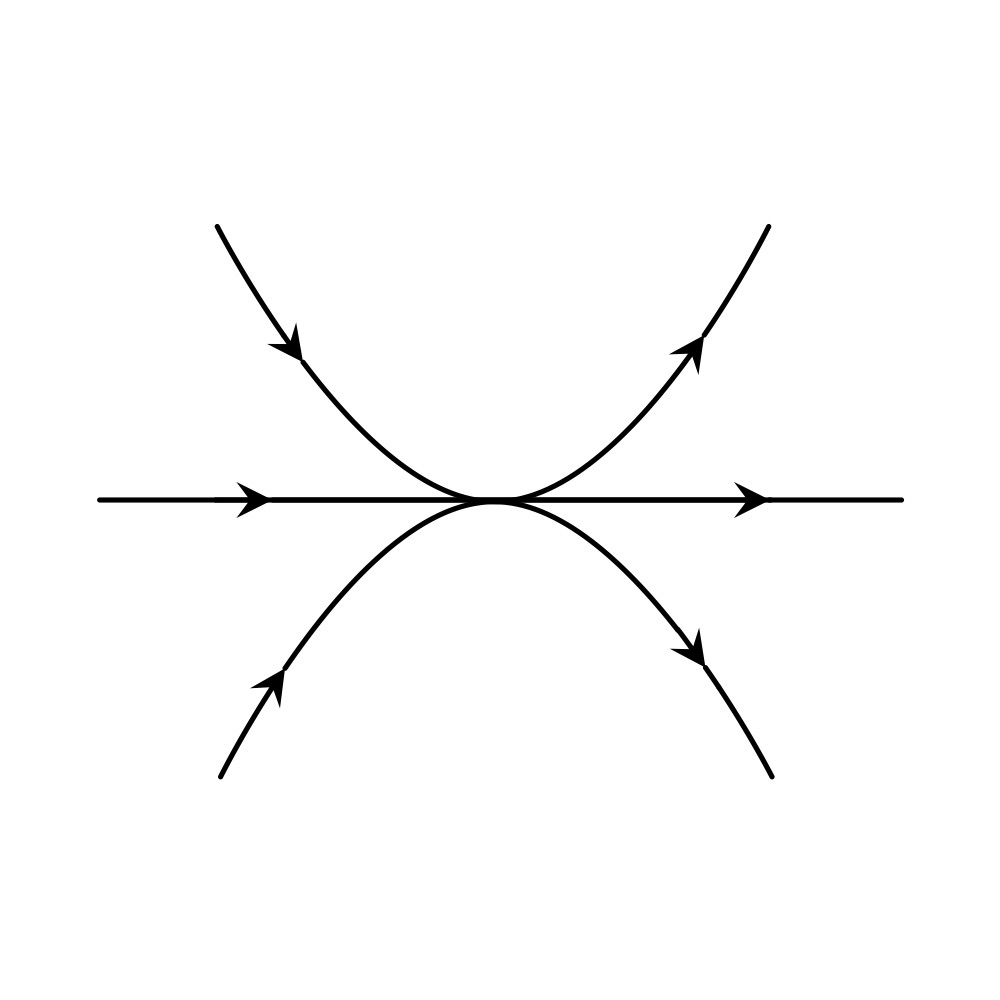}
			\caption{Index equal to $0$}
			\label{fig:dd2}
		\end{subfigure}\\
		\begin{subfigure}{.25\textwidth}
			\includegraphics[width=.95\linewidth]{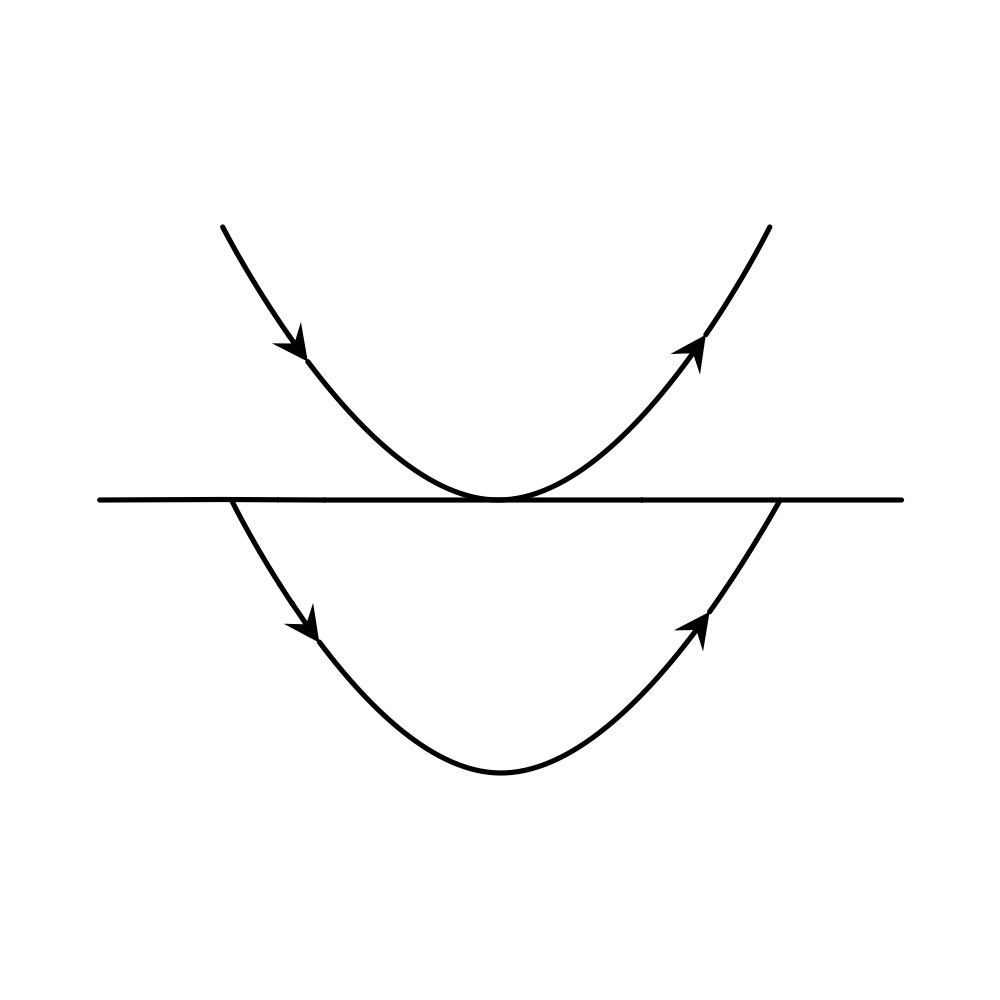}
			\caption{Index equal to $0$}
			\label{fig:dd3}
		\end{subfigure}%
		\begin{subfigure}{.25\textwidth}
			\includegraphics[width=.95\textwidth]{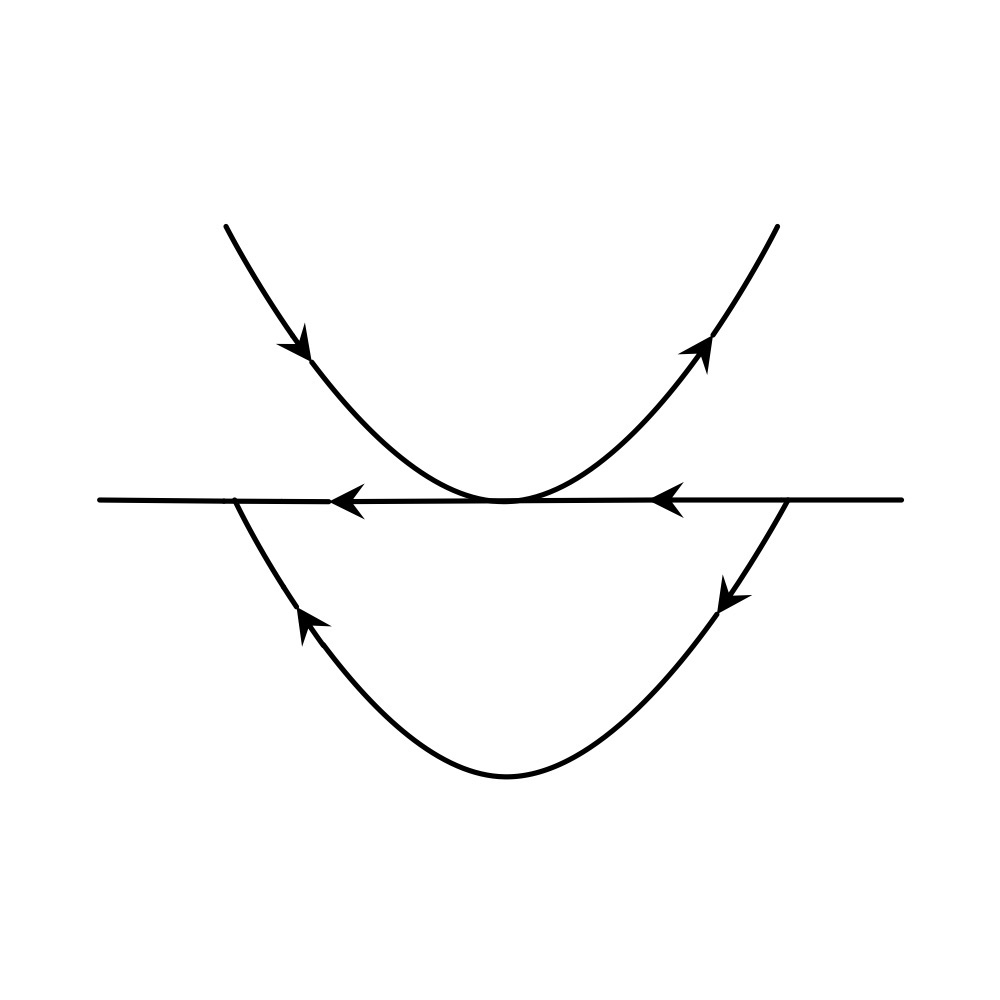}
			\caption{Index equal to $-1$}
			\label{fig:dd4}
		\end{subfigure}
		\begin{subfigure}{.25\textwidth}
			\includegraphics[width=.95\linewidth]{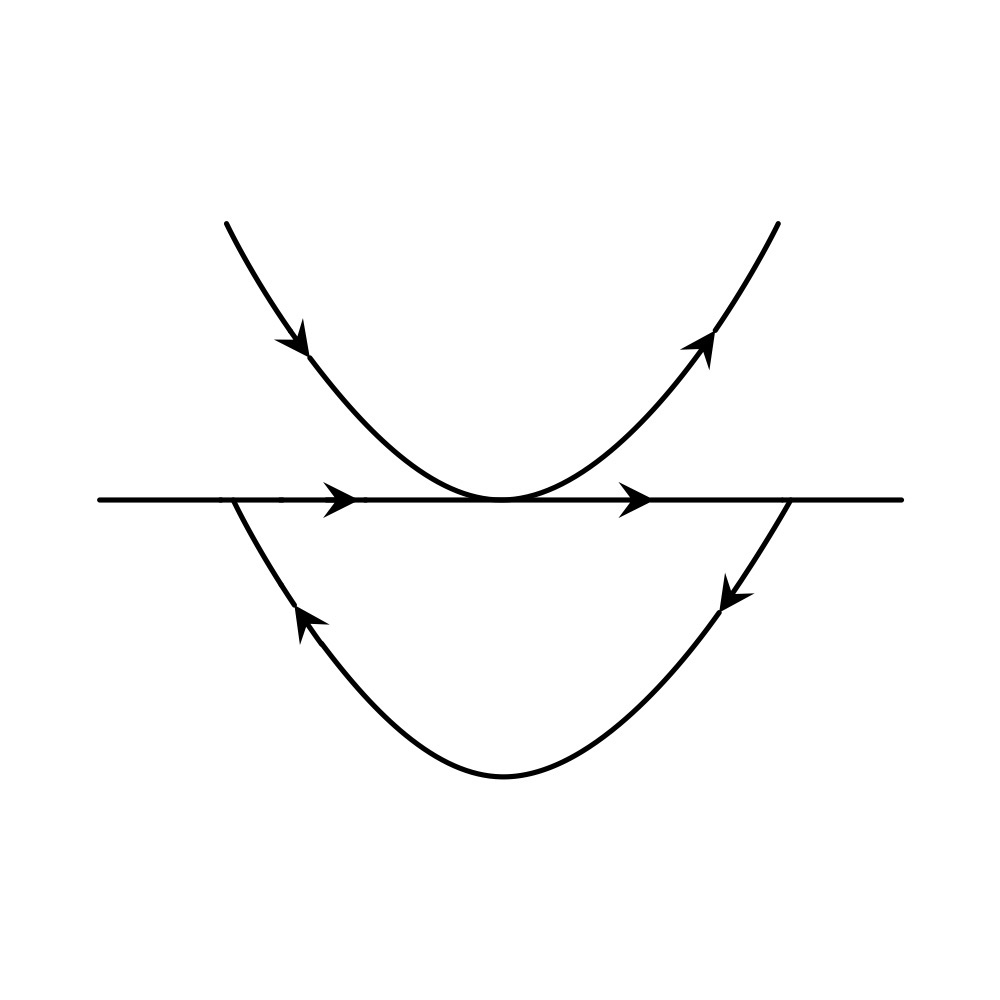}
			\caption{Index equal to $1$}
			\label{fig:dd5}
		\end{subfigure}\\
		\begin{subfigure}{.25\textwidth}
			\includegraphics[width=.95\textwidth]{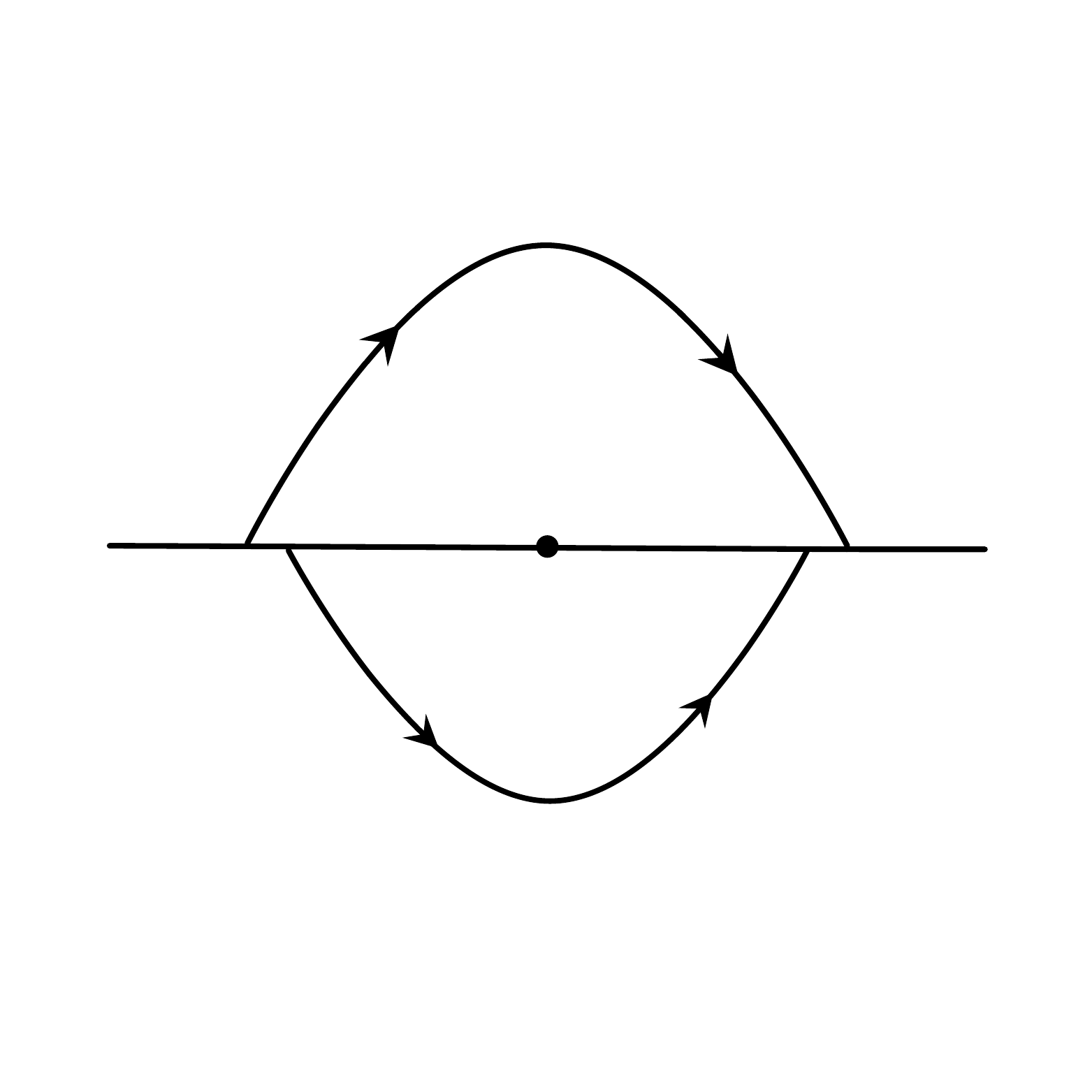}
			\caption{Index equal to $0$}
			\label{fig:dd6}
		\end{subfigure}%
		\begin{subfigure}{.25\textwidth}
			\includegraphics[width=.95\textwidth]{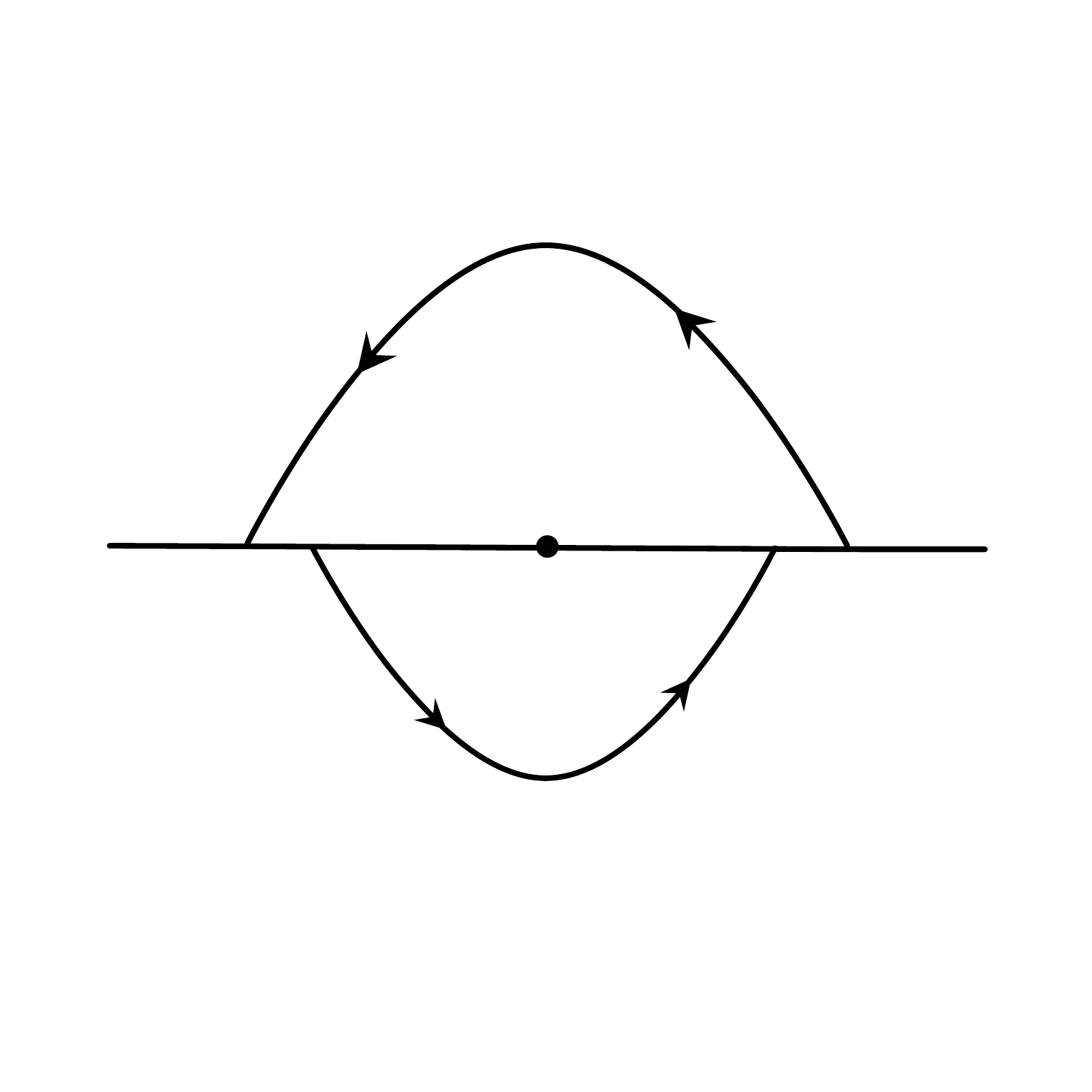}
			\caption{Index equal to $1$}
			\label{fig:dd7}
		\end{subfigure}%
		\caption{Indices for all the possible topological configurations of fold-fold tangential singularities.}
		\label{fig:dd}
	\end{figure}

\begin{proposition}\label{Prop.foldfold} Let  $p\in\Sigma$ be a fold-fold tangential singularity of a Filippov vector field $Z=(F^+,F^-)$ and assume that $k\neq0$.	Then, the following statements hold: 
	\begin{itemize}
		\item[(a)] If $F^+_1(0,0)F^-_1(0,0)>0$, then $I_p(Z)=0.$
		\item[(b)] If $F^+_1(0,0)F^-_1(0,0)<0$ and $k>0$, then $I_p(Z)=1.$
		\item[(c)] If $F^+_1(0,0)F^-_1(0,0)<0$ and $k<0$, then $I_p(Z)=-1.$
	\end{itemize}
\end{proposition}
\begin{proof}
	The canonical form of the fold-fold tangential singularity, which is obtained after rescaling the time variable (see \cite{Novaes2021}), is given by 
	\begin{align}\label{foldfoldsimplified}
		Z(x,y) = \left\{\begin{array}{lll}
			F^+(x,y)= (\mu^+, a^+ x + x^2 f^+(x) + y g^+(x, y)),\,\text{if}\,\, y>0,\\
			F^-(x,y)= (\mu^-, a^- x + x^2 f^-(x) + y g^-(x, y)),\,\text{if}\,\, y<0,
		\end{array}\right.
	\end{align}
	where   $$\de^\pm = \sgn(F_1^\pm(0,0)) \quad  \text{ and } \quad a^\pm = \dfrac{1}{|F_1^\pm(0,0)|}\dfrac{\partial F_2^\pm}{\partial x} (0,0).$$ Notice that $p=(0,0)$ is a fold-fold tangential singularity of \eqref{foldfoldsimplified}. 
	
Consider the following 1-parameter family of Filippov vector fields
	\begin{align}\label{foldfoldsimplifiedpertur}
		Z_{\delta}(x,y) = \left\{\begin{array}{l}
			F^+(x,y), \,\,\text{if}\,\, y>0,
			\\
			F^-_{\delta}(x,y),\,\,\text{if}\,\, y<0, 
			\end{array}\right.
	\end{align}
	where $F^-_{\delta}(x,y)=F^-(x-\delta,y)$. Since, for $\delta=0$, $Z_0(x,y)=Z(x,y)$, Proposition \ref{prop:invperd} provides the existence of $\bar{\delta}>0$ such that, for each $\delta\in (0,\bar\delta)$, $Z_{\delta}$ has no singularities on $\p B$  and 
\begin{equation}\label{index-fold-fold}
I_{\partial B}(Z)=I_{\partial B}(Z_{\delta}).
\end{equation}
 In what follows, we will apply Propostions \ref{prop:somaindexsingu} and \ref{prophiperbolic}  to compute the index $I_{\partial B}(Z_{\delta})$.
	
First, we shall determine the crossing, sliding, and escaping regions of $Z_{\delta}$. Notice that the points $(0,0)$ and $(\delta,0)$ correspond to regular-fold tangential singularities of $Z_{\delta}$, which are contact points of $F^+$ and $F^-_{\delta}$, respectively. Moreover,
\[
(F^+f)( x,0)\cdot(F^-_{\delta} f)( x,0) =x(x-\delta)\big( a^- a^+ +O(x)+ O(\delta )\big)
\]
and
\[
\dfrac{(F^+f)(x, 0)}{x}= a^+ +\CO(x),
\]
where $f(x,y)=y$. Since $a^- a^+\neq0$ and $x\in[-r,r]$, we can take $r>0$ and $0<\delta_0<r$ smaller in order that
 \[
 \sgn\Big((F^+ f)( x,0)\cdot(F^-_{\delta}  f)( x,0)\Big)=\sgn\big(a^- a^+ x(x-\delta)\big)
 \]
 and
 \[
\sgn\Big((F^+f)(x, 0)\Big)= \sgn(a^+ x),
 \]
 for every $\delta\in(0,\delta_0)$ and $x\in[-r,r]$. This allows to distinguish the crossing, sliding, and escaping regions of $Z_{\delta}$ for $x\in[-r,r]$, which is summarized in \autoref{tab:regions-foldfold}.
	\begin{table}[h]
		\centering
		\begin{tabular}{|c|c|c|c|c|}
			\hline			
			$a^+$ & $a^-$ & $-r\leq x < 0$  & $0 < x < \delta $ & $\delta<x\leq r$  \\\hline\hline
			$+$   & $+$   & crossing & escaping    & crossing \\\hline
			$-$   & $-$   & crossing & sliding     & crossing \\\hline
			$+$   & $-$   & sliding  & crossing    & escaping \\\hline
			$-$   & $+$   & escaping & crossing    & sliding  \\\hline
		\end{tabular}
		\caption{Crossing, sliding, and escaping regions of $Z_{\delta}$ for $x\in[-r,r]$ and $\delta\in(0,\delta_0)$, with $r>0$ and $ 0<\delta_0 < r $ sufficiently small.}
		\label{tab:regions-foldfold}
	\end{table}

Besides the regular-fold tangential singularities $(0,0)$ and $(\delta,0)$, $Z_{\delta}$ may present pseudo-equilibria inside the sliding and escaping regions. Thus, in the sequel, we need to determine the existence of zeros of the sliding vector field $Z^s_{\delta}(x)$ (given by \eqref{slid}) for $x\in[-r,r]$ and $\delta\in(0,\delta_0)$. To do that consider the normalization of the sliding vector field $Z^s_{\delta}(x)$
\[	
		A(\delta, u)=	 (F^- _{\delta}f)(x,0) F^+(x,0)- (F^+ f)(x,0) F^-_{\delta}(x,0),
\]
Let $k$ be given by \ref{k}, that is, $k=a^+\mu^- - a^-\mu^+\neq0$ for \eqref{foldfoldsimplifiedpertur}. 	
Since
	\[
	A(0,0)=0 \quad\text{and}\quad \dfrac{\p A}{\p x}(0,0)=-k\neq0,
	\]
	the Implicit Function Theorem provides the existence of $x^*(\delta),$ defined for $\delta\in[0,\delta_0)$ with $\delta_0$ sufficiently small, satisfying $x^*(0)=0$ and $A(\delta,x^*(\delta))=0$. Moreover, one can see that
	\[
	x^*(\delta)=\delta u^*+\CO(\delta^2),
	\]	
	where
	\begin{equation}\label{u*}
 u^* = -\dfrac{a^-\mu^+}{k} = \dfrac{1}{1-\frac{a^+\mu^-}{a^-\mu^+}}\notin\{0,1\}.
	\end{equation}
	Therefore, for $\delta\in(0,\delta_0)$, $x^*(\delta)$ is the unique possible zero of the the sliding vector field $ Z_{\delta}^s$ bifurcating from the fold-fold tangential singularity of $Z$. In order to determine if it is indeed a pseudo-equilibrium, we have to check whether it is contained or not in $\Sigma^{s,e}$.
	
	First, suppose that $a^+a^->0$. From \autoref{tab:regions-foldfold}, $x^*(\delta)$ is a pseudo-equilibrium provided that  $0 < x^*(\delta) < \delta$ for every $\delta\in(0,\delta_0)$, which is equivalent to $0<u^*<1$. From \eqref{u*}, the last inequality is equivalent to $\mu^+\mu^-=-1$.

Now, assume that  $a^+a^-<0$. From \autoref{tab:regions-foldfold}, $x^*(\delta)$ is a pseudo-equilibrium provided that  $x^*(\delta)<\delta \bar u_0(\delta)$ or $x^*(\delta)> \delta \bar u_1(\delta)$ for every $\delta\in(0,\delta_0)$, which is equivalent to	$u^*<0$ or $u^*>1$. Again, from \eqref{u*}, $u^*<0$ or $u^*>1$ is equivalent to $\mu^+\mu^-=-1$.
	
	 Thus, taking the above comments into account, $x^*(\delta)$ is a pseudo-equilibrium if and only if  $\mu^+\mu^-=-1$. Hence, for $ \mu^+\mu^- = 1 $ there is no pseudo-equilibrium inside $B$ and, therefore, Propostions \ref{prop:somaindexsingu} and \ref{prophiperbolic} implies that $I_{\partial B}(Z_{\delta})=0$ for every $\delta\in(0,\delta_0)$. The proof of statement (a) follows from the relationship \eqref{index-fold-fold}.
	 
	 Now, assume that $ \mu^+\mu^- = -1. $ Notice that
	 \[
	 Z_{\delta}^s=\dfrac{A(\delta,x)}{\Delta_{\delta}(x)},
	 \]
	 where $ \Delta_{\delta}(x)= (F^-_{\delta} f)( x,0)-(F^+ f)( x,0)$ only vanishes for $x\in\{0,\delta\}$. Moreover, $\Delta_{\delta}(x)>0$ (resp. $\Delta_{\delta}(x)<0$) provided that $(x,0)$ belongs to the sliding (resp. escaping) region of $Z_{\delta}$. Since,
\[
 \dfrac{\p A}{\p x}(\delta,x^*(\delta))=-k+\CO(\delta),
 \]
	one has that 
	 \[
	 \sgn\left(\dfrac{\p Z_{\delta}^s}{\p x}(\delta,x^*(\delta))\right)=-k\,\sgn\big(\Delta_{\delta}(x^*(\delta))\big)\neq0.
	 \]
It implies that $x^*(\delta)$ is indeed a hyperbolic pseudo-equilibrium of $Z_{\delta}$, which is pseudo-node for $k>0$ and a pseudo-saddle for $k<0$. Hence Propostions \ref{prop:somaindexsingu} and \ref{prophiperbolic} implies that, for every $\delta\in(0,\delta_0)$, $I_{\partial B}(Z_{\delta})=1$ for $k>0$ and $I_{\partial B}(Z_{\delta})=1$ for $k<0$. The proof of statements (b) and (c) follow from the relationship \eqref{index-fold-fold}.
	 \end{proof}	

\subsection{Indices of regular-cusp tangential singularities and saddle-node pseudo-equilibria}

\begin{proposition}\label{rcsaps}
If  $p\in\Sigma$ is a regular-cusp tangential singulairty or a saddle-node pseudo-equilibrium of the Filippov vector field $Z=(F^+,F^-),$ then 
$I_p(Z)=0.$
\end{proposition}

\begin{proof}
Let $p$ a saddle-node pseudo-equilibrium of the Filippov vector field $Z,$ that is, $p$ is a saddle-node of the sliding vector field $Z^s$. Take $r>0$ such that $p$ is the unique singularity of $Z$ in $B=B_r(p).$ It is easy to see that there exists a $1$-parameter family $Z_{\delta}$, perturbation of $Z$, such that the sliding vector field $Z_{\delta}^s$ undergoes a saddle-node bifurcation for $\delta=0$. This means that $Z_{\delta}^s$ does not have any singularities on $B$ for $\delta>0$  (or $\delta<0$ depending on the direction of the bifurcation) sufficiently small. Hence, from Proposition \ref{prop:invperb} we get that  $I_{\partial B}(Z_{\delta})=0$. Finally, Proposition \ref{prop:invperd}  implies that $I_{\partial B}(Z)=I_{\partial B}(Z_{\delta})=0$.

As in the saddle-node pseudo-equilibrium, the regular-cusp tangential singularity can also be removed by means of perturbations. Then, the proof that its index is zero follows in an analogous way.\end{proof}

\section{Poincar\'{e}--Hopf Theorem for Filippov vector fields}\label{sec:PHTthmFVF}

In this section, we state and prove the main result of this paper, related to the Poincaré--Hopf Theorem (the classical version, for smooth vector fields, in  the Appendix).

We demonstrate that, by extending the index for singularities in Filippov vector fields as provided in Section \ref{sec:def}, the Poincaré--Hopf Theorem remains valid for Filippov vector fields defined on 2-dimensional compact manifolds.

Originally proven by Poincaré in two dimensions and generalized by Hopf (see, for instance, \cite{Hopf27} and \cite{poincare}), the Poincaré--Hopf Theorem is one of the most intriguing results in differential topology. It connects global invariants of a manifold, such as the Euler characteristic, with local invariants of vector fields on the manifold, like the indices of singularities (see Theorem \ref{PHt} from the Appendix). This relationship provides a profound link between the topology of a manifold and the behavior of vector fields defined on it.

Let's briefly discuss some concepts related to the Euler characteristic, as we have already talked extensively about the index of singularities. The concept of the Euler characteristic originated with Leonhard Euler in 1758 in the context of convex polyhedra. For a polyhedron $ P $, the Euler characteristic is defined as $ \chi(P) = v - e + f $, where $ v $ is the number of vertices, $ e $ is the number of edges, and $ f $ is the number of faces. This formula, now known as Euler's polyhedron formula, was later extended to higher-dimensional manifolds.

For surfaces, the Euler characteristic is related to another concept, which is the genus of the surface. The genus $g$ of a surface $M$ refers to the number of ``holes'' or ``handles'' in the surface (for a precise definition, we refer to \cite{hatcher,MR0226651}). For example, a sphere has a genus of 0 and no holes, a torus has a genus of 1 and one hole and a surface with two holes, like a double torus, has a genus of 2. The relationship between the Euler characteristic and the genus is given by the formula $\chi(M) = 2 - 2g,$ where $g$ is the genus of $M$.

For a finite-dimensional compact manifold, the Euler characteristic can be indeed computed using a triangulation that transforms the manifold into a polyhedral complex, allowing the use of Euler's classical formula, despite the existence of more efficient constructions for calculation in modern times, such as the Betti numbers' theory or using homology (see, for instance, \cite{MR0226651,hatcher}). The Euler characteristic serves as a very good topological and homotopy invariant for manifolds, in the sense that homeomorphic manifolds, which can be continuously deformed into each other, share the same Euler characteristic (see, for instance, \cite{hatcher}).

Finally, let's state the precise formulation of the main result that we will prove.

\begin{theorem}\label{teoremapoincare}
Let $\CZ$ be the Filippov vector field (defined on a $2$-dimensional compact manifold $M$)  given by \eqref{dds}. Denote the set of the singularities of $\CZ$ by $\CS$ and assume that they are all isolated. Then,
\[
\sum_{p\in\CS} I_p(\CZ)=\chi(M),
\]
where $\chi(M)$ is the Euler Characteristic of $M$.
\end{theorem}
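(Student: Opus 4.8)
The plan is to reduce the Filippov Poincar\'e--Hopf formula to the classical (smooth) Poincar\'e--Hopf Theorem by applying a \emph{global} ST-regularization (Remark \ref{globalreg}) to $\CZ$, obtaining a smooth vector field $\CZ_\e$ on $M$, and then showing that (i) the total index of $\CZ_\e$ equals $\chi(M)$ by the classical theorem, and (ii) the total index of $\CZ_\e$ equals the total index of $\CZ$ via the local invariance Theorem \ref{thm:invreg} together with the additivity Proposition \ref{prop:somaindexsingu} and the vanishing Proposition \ref{prop:invperb}. Concretely, since $\CS$ is finite and each singularity is isolated, I would choose pairwise disjoint closed balls $B_j = B_{r_j}(p_j)$ (in suitable charts) around each $p_j\in\CS$, small enough that $\p B_j$ contains no singularity of $\CZ$ and that inside each chart the pushforward writes as \eqref{locdds}; by definition $I_{p_j}(\CZ) = I_{\p B_j}(\Phi_{j*}\CZ)$.

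Next I would invoke the global ST-regularization $\CZ_\e$ with the atlas $\mathcal{A}$ of Remark \ref{globalreg}, chosen to be compatible with the charts around the $p_j$ so that on each such chart $(\Phi_{j})_*\CZ_\e$ is exactly the local ST-regularization \eqref{regula} of $(\Phi_{j})_*\CZ$. By Theorem \ref{thm:invreg} applied in each chart, there is $\bar\e>0$ such that for $\e\in(0,\bar\e]$ the circle $\p B_j$ carries no singularity of $\CZ_\e$ and $I_{\p B_j}(\CZ_\e) = I_{\p B_j}(\CZ) = I_{p_j}(\CZ)$. On the complement $M\setminus\bigcup_j B_j$, which is compact and on which $\CZ$ has only regular points, Lemma \ref{analucia1} (applied in finitely many charts covering this compact set and taking the minimum of the resulting $\e$'s) yields $\e^*\in(0,\bar\e]$ such that $\CZ_{\e^*}$ has no zeros on $M\setminus\bigcup_j B_j$. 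Hence all zeros of the smooth vector field $\CZ_{\e^*}$ lie inside the $B_j$, and each such zero contributes, ball by ball, a total index $I_{\p B_j}(\CZ_{\e^*}) = I_{p_j}(\CZ)$ by the additivity of the classical index (Proposition \ref{appendixprop1}). Applying the classical Poincar\'e--Hopf Theorem to $\CZ_{\e^*}$ (whose zero set is finite: it is contained in the finitely many balls $B_j$, and one may further perturb $\CZ_{\e^*}$ inside the $B_j$ keeping $I_{\p B_j}$ fixed to make the zeros nondegenerate, or simply use the version of the classical theorem valid for isolated zeros) gives
\[
\sum_{p\in\CS} I_p(\CZ) \;=\; \sum_{j} I_{\p B_j}(\CZ_{\e^*}) \;=\; \sum_{\text{zeros } q \text{ of }\CZ_{\e^*}} \ind_q(\CZ_{\e^*}) \;=\; \chi(M).
\]

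The main obstacle I anticipate is the \emph{global} bookkeeping: Theorem \ref{thm:invreg} and Lemma \ref{analucia1} are stated locally (in a single chart $D\subset\R^2$), so I must be careful that the global ST-regularization genuinely restricts, on each chart of its defining atlas $\mathcal{A}$, to the local ST-regularization appearing in those statements, and that the various $\e$-thresholds coming from the finitely many charts can be taken uniformly (a compactness argument). A secondary technical point is ensuring the smooth vector field $\CZ_{\e^*}$ has only isolated zeros so that the classical Poincar\'e--Hopf Theorem applies directly; this is harmless because ST-regularization is smooth and one is free to make a further $C^1$-small perturbation supported in $\bigcup_j B_j$ that does not change any $I_{\p B_j}$ (by Proposition \ref{prop:pertsmooth}) while putting the zeros in general position. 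Once these points are handled, the identity above is immediate.
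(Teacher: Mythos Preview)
Your proposal is correct and follows essentially the same strategy as the paper's proof: localize in disjoint charted balls around each singularity, pass to a global ST-regularization $\CZ_\e$ compatible with those charts (Remark \ref{globalreg}), invoke Theorem \ref{thm:invreg} ball-by-ball to identify $I_{\p B_j}(\Phi^j_*\CZ_{\e})$ with $I_{p_j}(\CZ)$, then perturb the resulting smooth field to one with only hyperbolic (hence finitely many) zeros and apply the classical Poincar\'e--Hopf Theorem. The only cosmetic difference is that you explicitly appeal to Lemma \ref{analucia1} plus compactness to ensure $\CZ_{\e^*}$ has no zeros on $M\setminus\bigcup_j B_j$, whereas the paper folds this into the choice of the generic perturbation $\X\in\mathcal G$ close to $\CZ_{\e^*}$; your anticipated ``global bookkeeping'' and ``isolated zeros'' issues are exactly the two technical points the paper also has to address.
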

\begin{proof}
Let $p_i\in M$, $i\in\{1, \ldots, n\},$ be the singularities of the Filippov vector field $\CZ$. Consider an atlas $\mathcal{A}=\{(U_{\alpha},\Phi_{\alpha}):\alpha\}$ of $M$ satisfying that:
\begin{enumerate}
\item for each $i\in\{1, \ldots, n\},$  there exists $\alpha_i$ such that $p_i$ is the unique singularity of $\CZ$ inside $U_{\alpha_i}$, and
\item $U_{\alpha_i}\cap U_{\alpha_j}=\emptyset$, for $i\neq j$.
\end{enumerate}
Denote $U^i=U_{\alpha_i}$ and $\Phi^i=\Phi_{\alpha_i}$.  From Definition \ref{indiceM}, $I_{p_i}(\CZ)=I_{\Phi^i(p_i)}(\Phi^i_*\CZ)$. Take $r_i>0,$ for $i\in\{1,\ldots, n\}$, such that $B_i=B_{r_i}(\Phi^i(p_i))\subset D^i:=\Phi^i(U^i)$. Thus, from Definition \ref{indexp}, we have that  $I_{p_i}(\CZ)=I_{\p B_i}(\Phi^i_*\CZ)$.

Now, let $\CZ_{\e}$ be a global ST-regularization of $ \CZ$. Taking Remark \ref{globalreg} into account, the atlas $\A$ can be chosen in such a way that $\Phi^i_*\CZ_\e:D^i\to\mathbb{R}^2$ writes like \eqref{regula}. From Theorem \ref{thm:invreg}, there exists $\bar\e>0$ such that $I_{\p B_i}(\Phi^i_*\CZ)=I_{\p B_i}(\Phi^i_*\CZ_{\e^*})$, for $i\in\{1,\ldots, n\}$ and $\e\in(0,\bar\e]$. So far, we have obtained that
\begin{equation}\label{sum0}
I_{p_i}(\CZ)=I_{\p B_i}(\Phi^i_*\CZ_{\e}),
\end{equation}
for $i\in\{1,\ldots, n\}$ and $\e\in(0,\bar\e]$.

Now, fix $\e^*\in(0,\bar \e]$. Recall that the set $\mathcal{G}\subset C^l(M)$ of the vector fields defined on $M$ having only hyperbolic singularities is open and dense in $C^l(M)$ (see \cite[Theorem 3.4]{palis2012geometric}). Since $\CZ_{\e^*}\in C^l(M),$ there exists a perturbation $\X\in \mathcal{G}$ of $\CZ_{\e^*}$ (as close  $\CZ_{\e^*}$ as we want) satisfying that $\X$ has finitely many singularities and none of them are contained in $M\setminus \bigcup_{i=1}^n (\Phi^i)^{-1}( B_i)$. Thus, for each $i\in\{1,\ldots, n\}$, let $p_i^j,$ $j\in\{1,\ldots, m_i\}$, be the singularities of $\X$ in $(\Phi^i)^{-1}( B_i)$. Hence,
\begin{equation}\label{sum2}
\sum_{j=1}^{m_i} I_{p_i^j}(\X)=\sum_{j=1}^{m_i} I_{\Phi^i(p_i^j)}(\Phi^i_*\X)=I_{\p B_i}(\Phi^i_*\X).
\end{equation}
Notice that $\X$ can be  taken sufficiently close to $\CZ$ in order that $||\Phi^i_*\X(\x)-\Phi^i_* \CZ_{\e^*}(\x)||<||\Phi^i_* \CZ_{\e^*}(\x)||$ for every $\x\in\p B_i$ and $i\in\{1,\ldots,n\}$. Thus, Proposition \ref{prop:pertsmooth} from the Appendix implies that
\[
I_{\p B_i}(\Phi^i_*\X)=I_{\p B_i}(\Phi^i_* \CZ_{\e^*}),
\]
which, together with the relationship \eqref{sum0} and \eqref{sum2}, provides
\begin{equation}\label{sum1}
\sum_{j=1}^{m_i} I_{p_i^j}(\X)=I_{p_i}(\CZ).
\end{equation}

Finally, applying the Poincar\'{e}-Hopf Theorem for the smooth vector field $\X$ (see Theorem \ref{PHt} from the Appendix), it follows that
\begin{equation}\label{sum4}
\sum_{i=1}^n\sum_{j=1}^{m_i} I_{p_i^j}(\X)=\chi(M).
\end{equation}
Therefore, from \eqref{sum1} and \eqref{sum4}, we conclude that
\[
\sum_{i=1}^n I_{p_i}(Z)=\sum_{i=1}^n\sum_{j=1}^{m_i} I_{p_i^j}(\X)=\chi(M).
\]
\end{proof}

Taking into account that the Euler Characteristic of a sphere is $2$, we obtain, as a direct consequence of Theorem \ref{teoremapoincare}, the following version of the Hairy Ball Theorem for Filippov vector fields.
\begin{corollary}\label{hairy}
Assume that $M$ is a smooth sphere and let $\CZ$ be the Filippov vector field given by \eqref{dds} defined on $M$. Then, $\CZ$ has at least one singularity with positive index.
\end{corollary}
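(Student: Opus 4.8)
The plan is to derive the corollary directly from the Poincaré--Hopf Theorem for Filippov vector fields (Theorem \ref{teoremapoincare}) by a contradiction argument exploiting the value of the Euler characteristic of the sphere. First I would recall that $\chi(\mathbb{S}^2)=2$, a standard fact that I would simply cite (e.g.\ from \cite{fulton1995,MR0226651} or the Appendix). Then I would suppose, for the sake of contradiction, that $\CZ$ is a Filippov vector field on $M$ (a smooth sphere) having \emph{no} singularities at all, i.e.\ $\CS=\emptyset$.

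The key step is that the hypothesis of Theorem \ref{teoremapoincare} is then vacuously satisfied: the singularity set $\CS$ is finite (empty), hence in particular all its elements are isolated, so the theorem applies and yields
\[
\sum_{p\in\CS} I_p(\CZ)=\chi(M).
\]
Since $\CS=\emptyset$, the left-hand side is an empty sum and therefore equals $0$. On the other hand, since $M$ is a sphere, $\chi(M)=\chi(\mathbb{S}^2)=2$. This gives $0=2$, a contradiction. Consequently $\CZ$ must have at least one singularity.

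There is essentially no serious obstacle here; the whole content has already been carried by Theorem \ref{teoremapoincare}. The only minor points worth spelling out are (i) that an empty (finite) singularity set trivially meets the ``all isolated'' hypothesis, so that the main theorem is genuinely applicable, and (ii) the identification $\chi(M)=2$, which uses that $M$ is (diffeomorphic to) the standard $2$-sphere. I would phrase the argument in one short paragraph, emphasizing that the corollary is an immediate consequence of the main theorem together with $\chi(\mathbb{S}^2)=2\neq 0$, mirroring the classical deduction of the Hairy Ball Theorem from the smooth Poincaré--Hopf Theorem.
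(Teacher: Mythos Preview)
Your proposal is correct and follows exactly the approach indicated in the paper, which simply records the corollary as a direct consequence of Theorem \ref{teoremapoincare} together with $\chi(\mathbb{S}^2)=2$. Your write-up merely spells out the contradiction argument that the paper leaves implicit.
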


\section{Invariance under regularization process: proof of Theorem \ref{thm:invreg}}\label{sectiom:invariaceunderreg}
This section is devoted to the proof of Theorem \ref{thm:invreg}. Consider the Filippov vector field $Z$ given by \eqref{locdds} and let $Z_{\e}$ be its ST-regularization given by \eqref{regula}. Notice that  $Z_{\e}(\x)=(X_{\e}(\x),Y_{\e}(\x))$, where
\begin{equation}\label{Ze}
\begin{array}{lll}
X_{\e}(\x)&=&\dfrac{F^+_1(\x)+F^-_1(\x)+\phi_{\e}(y)(F^+_1(\x)-F^-_1(\x))}{2},\vspace{0.3cm}\\
Y_{\e}(\x)&=&\dfrac{F^+_2(\x)+F^-_2(\x)+\phi_{\e}(y)(F^+_2(\x)-F^-_2(\x))}{2}.
\end{array}
\end{equation}

Let $B=B_r(\x_0) \subset D $ and consider the following parametrization of its boundary $\partial B$, $$\sigma(t)=(u(t),v(t))=\big(r\cos(t+\pi/2),r\sin(t+\pi/2)\big),\,\,\, t\in[0,2\pi]$$
Lemma \ref{analucia1} implies that $Z_{\e}$ does not vanish on $\p B$ for $\e>0$ sufficiently small. Since $Z_{\e}$ is a smooth vector field, we know that
\begin{equation}\label{indiceZe}
I_{\partial B}(Z_{\e})= \dfrac{1}{2\pi}\int_{\Gamma_{\e}} \omega_W=\displaystyle\dfrac{1}{2 \pi}\int_{0}^{2\pi} \Big(p_{Z_{\e}}(\sigma(t))+q_{Z_{\e}}(\sigma(t))\Big)\d t,
\end{equation}
where $\Gamma_{\e}=\{Z_{\e}\circ\sigma(t),\, t\in[0,2\pi] \}$ and, for a vector field  $A(\x)$, the functions $p_A$ and $q_A$ are given by expressions in \eqref{pA.qA} of the Appendix.

Since $0$ is a regular value of $f$, one can apply the Implicit Function Theorem to ensure that,  for $\e>0$ sufficiently small, the curve $\partial B$ intersects the boundaries of the regularization band,  $L^+_{\e}=\{(x,y)\in D:\,f(x,y)=\e\}$ and $L^-_{\e}=\{(x,y)\in D:\,f(x,y)=-\e\}$, at points $\sigma(w_i^{\e})$, for $i\in\{1,\ldots 4\}$  { in such a way that 
\begin{equation}\label{relwi}
w_1^{\e},w_2^{\e}\to  \pi/2\,\,\,\text{and}\,\,\, w_3^{\e},w_4^{\e}\to 3\pi/2\,\,\,\text{as}\,\,\, \e\to 0.
\end{equation}}
Also, let $\partial B=\partial B^1_{\e}\cup \partial B^2_{\e}\cup \partial B^3_{\e} \cup \partial B^4_{\e} \cup \partial B^5_{\e}$, where, for each $i\in \{1,2,3,4,5\},$ $\partial B^i_{\e}=\{\sigma(t): \, t \in (w_{i-1}^{\e},w_i^{\e})\}$ (see Figure \ref{fig:rep}). 
\begin{figure}[H]
\centering 
\begin{overpic}[width=7.5cm]{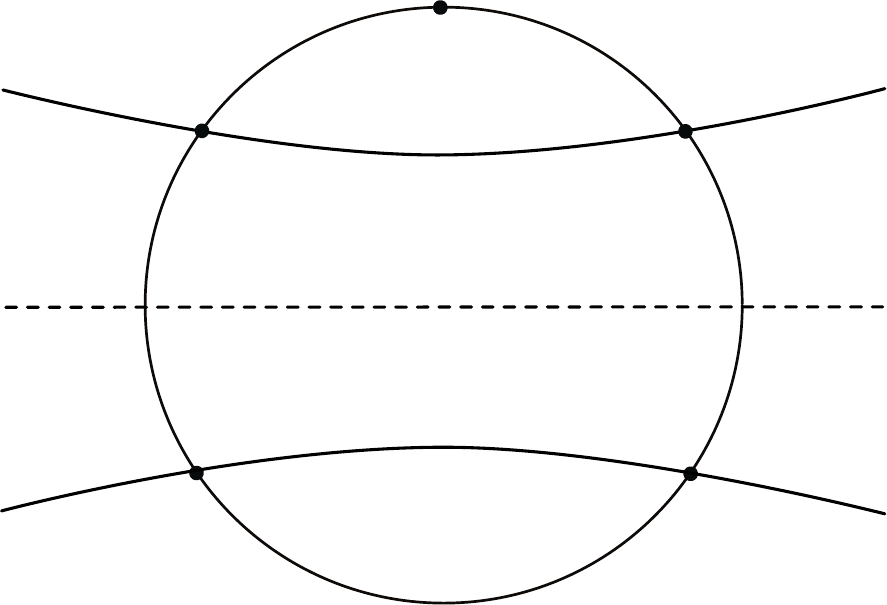}
\put(44,62){$\sigma(w_0^{\e})$}
\put(21,48){$\sigma(w_1^{\e})$}
\put(21,19){$\sigma(w_2^{\e})$}
\put(67,19){$\sigma(w_3^{\e})$}
\put(67,48){$\sigma(w_4^{\e})$}
\put(26,64.5){$\partial B^1_{\e}$}
\put(9,40){$\partial B^2_{\e}$}
\put(48,-5){$\partial B^3_{\e}$}
\put(84,40){$\partial B^4_{\e}$}
\put(66,64.5){$\partial B^5_{\e}$}
\put(101,56.5){$L^+_{\e}=f^{-1}(\e)$}
\put(101,32){$\Sigma=f^{-1}(0)$}
\put(101,8.5){$L^-_{\e}=f^{-1}(-\e)$}
\end{overpic}
\vspace{0.5cm}
\caption{Illustration of the curve $\partial B$ and its intersection with the boundaries of the regularization band, $L^+_{\e}$ and $L^-_{\e}$.}\label{fig:rep}
     \end{figure}

The index \eqref{indiceZe} can be split in several integrals as follows
\begin{equation}
\begin{array}{lll}\label{indicesuave}\displaystyle
    I_{\partial B}(Z_{\e})= \dfrac{1}{2\pi}\left(\int_{\Gamma_{\e}^1} \omega_W+\int_{\Gamma_{\e}^2} \omega_W+\int_{\Gamma_{\e}^3} \omega_W+\int_{\Gamma_{\e}^4} \omega_W+\int_{\Gamma_{\e}^5} \omega_W\right),
\end{array}
\end{equation}
where $\Gamma_{\e}^i=\{Z_{\e}(\x),\, \x\in \partial B^i_{\e}\}$ for $i\in\{1,2,3,4,5\}.$ Notice that, for $\x \in \Gamma_{\e}^1 \cup \Gamma_{\e}^5, $ $Z_{\e}(\x)=F^+(\x)$; for $\x\in \Gamma_{\e}^3,$ $Z_{\e}(\x)=F^-(\x);$ and for $\x\in \Gamma_{\e}^2 \cup \Gamma_{\e}^4, $ $Z_{\e}(\x)=(X_{\e}(\x),Y_{\e}(\x))$ is given by \eqref{Ze}.

In order to prove that $I_{\partial B}(Z)=I_{\partial B}(Z_{\e})$ for $\e>0$ small enough, it is sufficient to show that 
\begin{equation*}\label{goallimit}
\lim\limits_{\e \to 0}I_{\partial B}(Z_{\e})=I_{\partial B}(Z),
\end{equation*} because, since the index is a discrete function, we know that $\lim\limits_{\e \to 0}I_{\partial B}(Z_{\e})=I_{\partial B}(Z_{\e})$ for $\e>0$ sufficiently small.

{First, along  $\Gamma_{\e}^1$,  $\Gamma_{\e}^3$, and  $\Gamma_{\e}^5$, we are outside the regularization band, where the integrand does not depend on $\e$. In this case, taking into account the limits in \eqref{relwi}, we get}
\[
\begin{aligned}
\displaystyle \lim\limits_{\e \to 0}\displaystyle\int_{\Gamma_{\e}^1} \omega_W=&\displaystyle \lim\limits_{\e \to 0}\displaystyle \int_{w_0^{\e}}^{w_1^{\e}}\Big(p_{F^+}(\sigma(t)) +q_{F^+}(\sigma(t)) \Big)\d t=\displaystyle\int_{0}^{\pi/2}\Big(p_{F^+}(\sigma(t)) +q_{F^+}(\sigma(t)) \Big)\d t,\vspace{0.3cm}\\
\displaystyle \lim\limits_{\e \to 0}\displaystyle\int_{\Gamma_{\e}^3} \omega_W=&\displaystyle \lim\limits_{\e \to 0}\displaystyle \int_{w_2^{\e}}^{w_3^{\e}}\Big(p_{F^-}(\sigma(t))+q_{F^-}(\sigma(t)) \Big)\d t
=\displaystyle\int_{\pi/2}^{3\pi/2}\Big(p_{F^-}(\sigma(t)) +q_{F^-}(\sigma(t))\Big)\d t,\vspace{0.3cm}\\
\displaystyle \lim\limits_{\e \to 0}\displaystyle\int_{\Gamma_{\e}^5} \omega_W=&\displaystyle \lim\limits_{\e \to 0}\displaystyle \int_{w_4^{\e}}^{w_5^{\e}}\Big(p_{F^+}(\sigma(t)) +q_{F^+}(\sigma(t)) \Big)\d t
=\displaystyle\int_{3\pi/2}^{2\pi}\Big(p_{F^+}(\sigma(t)) +q_{F^+}(\sigma(t)) \Big)\d t,
\end{aligned}
\]
Therefore, adding up the integrals above, we obtain
\[
\displaystyle \lim\limits_{\e \to 0}\displaystyle\left(\int_{\Gamma_{\e}^1} \omega_W+\int_{\Gamma_{\e}^3} \omega_W+\int_{\Gamma_{\e}^5} \omega_W\right)= \int_{\Gamma^+} \omega_W +\int_{\Gamma^-} \omega_W,
\]
where $\Gamma^{\pm}=\{Z(\x),\, \x\in \partial B\cap\Sigma^{\pm}\}.$

Now, for the integral along $\Gamma_{\e}^2$ in \eqref{indicesuave}, we proceed with the following change of integration variable  
\[
t=\tau_1^{\e}(s)=(1-s) w_1^{\e}+s\,w_2^{\e},
\]
which provides
\begin{equation}
\begin{array}{lll}\label{integrala}
\displaystyle\int_{\Gamma^2_{\e}} \omega_W&=&\displaystyle \int_{w_1^{\e}}^{w_2^{\e}}\Big(p_{Z_{\e}}(\sigma(t))+q_{Z_{\e}}(\sigma(t))\Big)dt\vspace{0.2cm}\\
&=&\displaystyle \int_{0}^{1}\Big(p_{Z_{\e}}(\sigma\circ\tau_1^{\e}(s)) +q_{Z_{\e}}(\sigma\circ\tau_1^{\e}(s))\Big)\dfrac{d \tau_1^{\e}}{ds}(s)ds.
\end{array}
\end{equation}
Notice that the integrand of the equation \eqref{integrala} is uniformly convergent as $\e$ goes to $0.$ Then, by switching the limit with the integral and { taking into account the limits in \eqref{relwi}}, we get
\begin{equation}\label{integrala2}
\begin{array}{lll}
\lim\limits_{\e \to 0}\displaystyle\int_{\Gamma_{\e}^2}\omega_W&=&\displaystyle \int_{0}^{1}\lim\limits_{\e \to 0}\Big(p_{Z_{\e}}(\sigma\circ\tau_1^{\e}(s)) +q_{Z_{\e}}(\sigma\circ\tau_1^{\e}(s))\Big)\dfrac{d \tau_1^{\e}}{ds}(s)ds\vspace{0.2cm}\\
&=&\displaystyle \int_{0}^{1}\dfrac{4 \phi '(1-2 s) (F^+_1(-r,0) F^-_2(-r,0)-F^+_2(-r,0) F^-_1(-r,0))}{G_1(s)} ds,
\end{array}
\end{equation}
where 
\[
\begin{array}{lll}
G_1(s)&=&-2 \left(\phi (1-2 s)^2-1\right) F^+_1(-r,0) F^-_1(-r,0)+(\phi (1-2 s)+1)^2 F^+_1(-r,0)^2\\
& &-2 \left(\phi (1-2 s)^2-1\right) F^+_2(-r,0) F^-_2(-r,0)+(\phi (1-2 s)+1)^2 F^+_2(-r,0)^2\\
& &+(\phi (1-2 s)-1)^2 \left(F^-_1(-r,0)^2+F^-_2(-r,0)^2\right).
\end{array}
\]
By taking $u=\phi (1-2 s)-1$, i.e. $s=\sigma_1(u) :=(1-\phi^{-1}(u + 1))/2$, the integral \eqref{integrala2} becomes
\begin{equation}\label{segundamudanca}
\lim\limits_{\e \to 0}\displaystyle\int_{\Gamma_{\e}^2}\omega_W=\displaystyle\int_{0}^{-2} \dfrac{\alpha_1}{\beta_1 (u+2)^2-2 \psi_1 (u+2) u+\eta_1 u^2} d u,
\end{equation}
where $\alpha_1=\alpha(-r,0), ~~ \beta_1=\beta(-r,0), ~~ \psi_1=\psi(-r,0),$ and $\eta_1=\eta(-r,0)$ with
\begin{equation}
\begin{array}{lll}\label{sub1}
\alpha(\x)&:=& 2 F^+_2(\x) F^-_1\x-2 F^+_1(\x) F^-_2(\x),\\
\beta(\x)&:=& F^+_1(\x)^2+F^+_2(\x)^2,\\
\psi(\x)&:=& F^+_1(\x) F^-_1(\x)+F^+_2(\x) F^-_2(\x),\\
\eta(\x)&:=& F^-_1(\x)^2+F^-_2(\x)^2.
\end{array}
\end{equation}
Notice that the discriminant of the denominator of \eqref{segundamudanca}, given by $\psi_1^2 - \beta_1 \eta_1$, is less than or equal to zero. One can see that the discriminant is zero if, and only if, $\det ( F^+(- r,0) | F^-(- r,0) ) = 0$. In this case, $\alpha_1 = 0$ which implies that the integral is zero. Now assume that $\psi_1^2 - \beta_1 \eta_1<0. $ Hence,
\begin{equation}\label{integralgamma2}
\begin{array}{lll}
\lim\limits_{\e \to 0}\displaystyle\int_{\Gamma_{\e}^2}\omega_W&=&\displaystyle\int_{0}^{-2} \dfrac{\alpha_1}{\beta_1 (u+2)^2-2 \psi_1 (u+2) u+\eta_1 u^2} d u\vspace{0.3cm}\\
&=& \dfrac{\alpha_1 \left(\tan^{-1}\left(\frac{\psi_1-\beta_1}{\sqrt{\beta_1 \eta_1-\psi_1^2}}\right)+\tan^{-1}\left(\frac{\psi_1-\eta_1}{\sqrt{\beta_1 \eta_1-\psi_1^2}}\right)\right)}{2 \sqrt{\beta_1 \eta_1-\psi_1^2}}.
\end{array}
\end{equation}
Substituting \eqref{sub1} into \eqref{integralgamma2}, we get
\begin{equation}\label{Gamma2antesH}
\begin{aligned}
\lim\limits_{\e \to 0}\displaystyle\int_{\Gamma_{\e}^2}\omega_W=&\tan ^{-1}\left(\dfrac{F^-_1(-r,0)^2+F^-_2(-r,0)^2-F^+_1(-r,0) F^-_1(-r,0)-F^+_2(-r,0) F^-_2(-r,0)}{F^+_1(-r,0) F^-_2(-r,0)-F^+_2(-r,0) F^-_1(-r,0)}\right)\vspace{0.5cm}\\
+&\tan ^{-1}\left(\dfrac{F^+_1(-r,0)^2+F^+_2(-r,0)^2-F^+_1(-r,0) F^-_1(-r,0)-F^+_2(-r,0) F^-_2(-r,0)}{F^+_1(-r,0) F^-_2(-r,0)-F^+_2(-r,0) F^-_1(-r,0)}\right).
\end{aligned}
\end{equation}
Notice that the argument of the arctangent in \eqref{Gamma2antesH} is of the form
\begin{equation}\label{Hantesleicossenos}
\dfrac{F_1(\x)^2+F_2(\x)^2-F_1(\x)G_1\x-F_2(\x)G_2(\x)}{G_1(\x)F_2\x-G_2(\x)F_1(\x)},
\end{equation}
where $F(\x)=(F_1(\x),F_2(\x))$ and $G(\x)=(G_1(\x),G_2(\x))$ are general vector fields. Notice that \eqref{Hantesleicossenos} becomes $H_{(F,G)}(\x)$, defined in \eqref{HFG}, that is
\[
\dfrac{\|F(\x)\|^2 - \langle F(\x),G(\x)\rangle}{
\det \big( G(\x) | F(\x) \big) } = H_{(F,G)}(\x).
\]
Then,
\[
\lim\limits_{\e \to 0}\displaystyle\int_{\Gamma_{\e}^2}\omega_W=\tan^{-1}\left(H_{(F^-,F^+)}(-r,0)\right)-\tan^{-1}\left(H_{(F^+,F^-)}(-r,0)\right).
\]
Hence, we have obtained that
\[
\lim\limits_{\e \to 0}\displaystyle\int_{\Gamma_{\e}^2}\omega_W=\begin{cases}\tan^{-1}\left(H_{(F^-,F^+)}(-r,0)\right)-\tan^{-1}\left(H_{(F^+,F^-)}(-r,0)\right),& \det ( F^+(- r,0) | F^-(- r,0) )\neq0,\\
0,& \det ( F^+(- r,0) | F^-(- r,0) )=0,
\end{cases}
\]
which coincides with $J_{\partial B}^-(Z)$.

Analogously, for the integral along $\Gamma_{\e}^4$ in \eqref{indicesuave} we proceed with the following change of integration variable  
\[
\tau_2^{\e}(s)=(1-s) w_4^{\e}+sw_3^{\e},
\]
which provides
\begin{equation}\label{integralb}
\begin{array}{lll}
\displaystyle\int_{\Gamma_{\e}^4} \omega_W&=&\displaystyle \int_{w_3^{\e}}^{w_4^{\e}}\Big(p_{Z_{\e}}(\sigma(t))+q_{Z_{\e}}(\sigma(t))\Big)dt\vspace{0.2cm}\\
&=&\displaystyle \int_{0}^{1}\Big(p_{Z_{\e}}(\sigma\circ\tau_2^{\e}(s)) +q_{Z_{\e}}(\sigma\circ\tau_2^{\e}(s))\Big)\dfrac{d \tau_2^{\e}}{ds}(s)ds,
\end{array}
\end{equation}
Again, as the integrand of the equation \eqref{integralb} is uniformly convergent as $\e$ goes to $0$, by switching the limit with the integral{ and taking into account the limits in \eqref{relwi}}, we get \begin{equation}\label{integralb2}
\begin{array}{lll}
\,\,\, \lim\limits_{\e \to 0}\displaystyle\int_{\Gamma_{\e}^4}\omega_W&=&\displaystyle \int_{0}^{1}\lim\limits_{\e \to 0}\Big(p_{Z_{\e}}(\sigma\circ\tau_2^{\e}(s)) +q_{Z_{\e}}(\sigma\circ\tau_2^{\e}(s))\Big)\dfrac{d \tau_2^{\e}}{ds}(s)ds\vspace{0.2cm}\\
&=& \displaystyle\int_{0}^{1} \dfrac{4 \phi '(2 s-1) (F^+_2(r,0) F^-_1(r,0)-F^+_1(r,0) F^-_2(r,0))}{G_2(s)} d s,
\end{array}
\end{equation}
where 
\[
\begin{array}{lll}
G_2(s)&=&\phi (2 s-1)^2 \left((F^+_1(r,0)-F^-_1(r,0))^2+(F^+_2(r,0)-F^-_2(r,0))^2\right)\\& &+2 \phi (2 s-1) \left(F^+_1(r,0)^2+F^+_2(r,0)^2-F^-_1(r,0)^2-F^-_2(r,0)^2\right)\\& &+(F^+_1(r,0)+F^-_1(r,0))^2+(F^+_2(r,0)+F^-_2(r,0))^2.
\end{array}
\]
By taking $u=-1 + \phi(-1 + 2 s)$, i.e. $s=\sigma_2(u) := (\phi ^{(-1)}(u+1)+1)/2$, the integral \eqref{integralb2} becomes

\begin{equation}\label{segundamudancab}
\lim\limits_{\e \to 0}\displaystyle\int_{\Gamma_{\e}^4}\omega_W=\displaystyle\int_{-2}^{0} \dfrac{\alpha_2}{\beta_2 (u+2)^2-2 \psi_2 (u+2) u+\eta_2 u^2} d u,
\end{equation}
where $\alpha_2=\alpha(r,0), ~~ \beta_2=\beta(r,0), ~~ \psi_2=\psi(r,0)$, and $\eta_2=\eta(r,0)$ are given in \eqref{sub1}. 
Notice that the discriminant of the denominator of \eqref{segundamudancab}, given by $\psi_2^2 - \beta_2 \eta_2,$ is less than or equal to zero. One can see that the discriminant is zero if, and only if, $\det ( F^+( r,0) | F^-(r,0) ) = 0$. In this case, $\alpha_2 = 0$ which implies that the integral is zero. Now, assume that  $\psi_1^2 - \beta_1 \eta_1<0. $ Hence,
\begin{equation}\label{integralgamma4}
\begin{array}{lll}
\lim\limits_{\e \to 0}\displaystyle\int_{\Gamma_{\e}^4}\omega_W&=&\displaystyle\int_{-2}^{0} \dfrac{\alpha_2}{\beta_2 (u+2)^2-2 \psi_2 (u+2) u+\eta_2 u^2} d u\vspace{0.3cm}\\
&=& -\dfrac{\alpha_2 \left(\tan^{-1}\left(\frac{\psi_2-\beta_2}{\sqrt{\beta_2 \eta_2-\psi_2^2}}\right)+\tan^{-1}\left(\frac{\psi_2-\eta_2}{\sqrt{\beta_2 \eta_2-\psi^2}}\right)\right)}{2 \sqrt{\beta_2 \eta_2-\psi_2^2}}.
\end{array}
\end{equation}
Substituting \eqref{sub1} into \eqref{integralgamma4}, we get
\[
\lim\limits_{\e \to 0}\displaystyle\int_{\Gamma^4_{\e}}\omega_W=\tan^{-1}\left(H_{(F^+,F^-)}(r,0)\right)-\tan^{-1}\left(H_{(F^-,F^+)}(r,0)\right).
\]
Accordingly, we have concluded that
\[
\lim\limits_{\e \to 0}\displaystyle\int_{\Gamma^4_{\e}}\omega_W=\begin{cases}\tan^{-1}\left(H_{(F^+,F^-)}(r,0)\right)-\tan^{-1}\left(H_{(F^-,F^+)}(r,0)\right),& \det ( F^+(r,0) | F^-(r,0) )\neq0,\\
0,& \det ( F^+(r,0) | F^-(r,0) )=0,
\end{cases}
\]
which coincides with $J_{\partial B}^{+}(Z)$.

Therefore,  for $\e>0$ small enough, we get that 
\[
I_{\partial B}(Z_{\e})=\lim\limits_{\e \to 0}I_{\partial B}(Z_{\e})=J(Z)+\dfrac{1}{2\pi}\left( \int_{\Gamma^+} \omega_W +\int_{\Gamma^-} \omega_W\right)=I_{\partial B}(Z),
\]
which concludes the proof of Theorem \ref{sectiom:invariaceunderreg}.

\section{Conclusion}

The Poincaré--Hopf Theorem is one of the most intriguing results in differential topology, elegantly linking global invariants of a manifold, such as the Euler characteristic, with local properties of vector fields, like the indices of their singularities. This deep connection highlights how the topology of a manifold influences the dynamics of vector fields defined on it.

In this work, we have extended the classical Poincaré-Hopf Theorem to the setting of Filippov vector fields on 2-dimensional compact manifolds with smooth switching manifolds, as presented in Theorem \ref{teoremapoincare}. Additionally, Corollary \ref{hairy} provides a version of the Hairy Ball Theorem, stating that any Filippov vector field on a sphere with a smooth switching manifold must have at least one singularity (in the sense of Definition \ref{def:sing}) with positive index.

This extension is achieved through the introduction of a novel index definition that encompasses the singularities of Filippov vector fields, including pseudo-equilibria and tangential singularities (see Definition \ref{def:sing}). We began by defining the index for Filippov vector fields on a circle $\p B$, where $B$ denotes the closed ball $B = B_{r}(\x_0) = {\x \in \R^2 : |\x - \x_0| \leq r}$ (see Definition \ref{indexfilippov}). The resulting index formula, as given by \ref{eq:index}, generalizes the classical index by incorporating an additional angle term $J_{\partial B}(Z)$ (see Remark \ref{rem:equiJ} and Proposition \ref{prop:equiJ}). This additional term vanishes for smooth vector fields, thereby showing that our index definition extends the classical concept for smooth vector fields. Such a definition was used to define the index of isolated singularites of a planar Filippov vector fields (see Definition \ref{indexp}), which was subsequently generalized to singularities in Filippov vector fields on 2-dimensional manifolds $M$ (see Definition \ref{indiceM}).

A key attribute of this new index is its invariance under the Sotomayor-Teixeira regularization process (see Theorem \ref{thm:invreg}). This invariance allowed us to establish several classical index properties, namely: the index is an integer (see Proposition \ref{prop:invpera}); it is zero on a closed ball $B$ with no singularities (see Proposition \ref{prop:invperb}); and it is invariant under perturbations (see Propositions \ref{prop:invperc} and \ref{prop:invperd}) and homotopy (see Proposition \ref{prop:invpere}). Moreover, the index of $Z$ on a closed ball $B$ can be expressed as the sum of the indices of isolated singularities within $B$ (see Proposition \ref{prop:somaindexsingu}).

With these properties, we successfully computed the indices of all generic $\Sigma$-singularities (see Proposition \ref{prophiperbolic}) and some codimension-1 $\Sigma$-singularities, including fold-fold tangential singularities, regular-cusp tangential singularities, and saddle-node pseudo-equilibria (see Propositions \ref{Prop.foldfold} and \ref{rcsaps}). 

This extension of the Poincaré-Hopf Theorem and index theory offers a new framework for studying the dynamics of Filippov vector fields on 2-dimensional manifolds with smooth switching manifolds. In future work, we plan to extend this analysis to Filippov vector fields on higher-dimensional compact manifolds (possibly with boundary), where the corresponding index theory is yet to be developed.

\section{Appendix}\label{app}
This appendix provides some concepts and results from the index theory for smooth vector fields. We are following the references  \cite{MR2256001,fulton1995,QTheory}.

Let $\omega$ be a {\it differential $1$-form} defined on $\R^2,$ i.e. $\omega=p(\x)dx+q(\x) dy$ where $p$ and $q$ are smooth real functions on $\R^2.$ Let $\gamma$ be a curve on $\R^2$ with smooth parametrization $\alpha:[a,b]\to \R^2,$ $\alpha(t)=(u(t),v(t)).$ The integral of $\omega$ along $\gamma$ is defined as
\begin{equation}\label{intdef}
\int_{\gamma} \omega:=\int_a^b \Big(p(\alpha(t)) u'(t)+q(\alpha(t)) v'(t)\Big)dt.
\end{equation}
The above definition does not depend on the parametrization $\alpha$. Accordingly, the {\it winding number of $\gamma$ around the origin} $W(\gamma)$ is an integer defined by
\[
W(\gamma)=\dfrac{1}{2\pi}\int_{\gamma}\omega_W,
\]
where $\omega_W$ is the following differential $1$-form
\begin{equation}\label{1form}
\omega_W:=\dfrac{-y}{x^2+y^2}dx+\dfrac{x}{x^2+y^2}dy.
\end{equation}

The next result is a useful tool for computing the winding number.

\begin{proposition}[{\cite[Corollary 3.8]{fulton1995}}]\label{homotopyW}
If two smooth oriented closed curves, $\gamma$ and $\delta$, are homotopic in $\R^2\setminus\{(0,0)\}$, then $W(\gamma)=W(\delta)$.
\end{proposition}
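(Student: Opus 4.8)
The plan is to regard $\omega_W$ as a \emph{closed} differential $1$-form on the punctured plane $\R^2\setminus\{(0,0)\}$ and to exploit the fact that the period of a closed form over a loop is unchanged under homotopy. The first step is the elementary verification that $\d\omega_W=0$ on $\R^2\setminus\{(0,0)\}$: writing $\omega_W=p\,dx+q\,dy$ with $p(x,y)=-y/(x^2+y^2)$ and $q(x,y)=x/(x^2+y^2)$, one computes $\p_x q=\p_y p=(y^2-x^2)/(x^2+y^2)^2$, whence $\d\omega_W=(\p_x q-\p_y p)\,dx\wedge dy=0$ away from the origin. It is also worth recalling, as already noted in the text, that $\int_\gamma\omega_W$ does not depend on the chosen orientation-preserving parametrization, so $W(\gamma)$ is well defined.

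The second step is a Stokes argument on a rectangle. Assume first that the homotopy between $\gamma$ and $\delta$ in $\R^2\setminus\{(0,0)\}$ can be taken smooth, i.e. there is a smooth map $H\colon R:=[0,1]\times[0,2\pi]\to\R^2\setminus\{(0,0)\}$ with $t\mapsto H(0,t)$ parametrizing $\gamma$, $t\mapsto H(1,t)$ parametrizing $\delta$, and $H(s,0)=H(s,2\pi)$ for every $s\in[0,1]$. Then $H^*\omega_W$ is a smooth $1$-form on $R$ with $\d(H^*\omega_W)=H^*(\d\omega_W)=0$, so Stokes' theorem gives $0=\int_R\d(H^*\omega_W)=\int_{\p R}H^*\omega_W$. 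Along $\p R$, the two sides on which $t$ is constant are traversed in opposite senses and carry equal values of $H^*\omega_W$ (because $H(s,0)=H(s,2\pi)$), so their contributions cancel; the two sides on which $s$ is constant contribute $\int_\delta\omega_W$ and $-\int_\gamma\omega_W$. Hence $\int_\delta\omega_W=\int_\gamma\omega_W$, that is, $W(\gamma)=W(\delta)$.

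The step I expect to require the most care is the passage from a merely continuous homotopy — which is all that ``homotopic in $\R^2\setminus\{(0,0)\}$'' literally provides — to a smooth one as used above. One way around this is a standard smoothing: the image of a continuous homotopy $H_0$ is a compact subset of the open set $\R^2\setminus\{(0,0)\}$, hence has positive distance to the origin, so a uniform mollification of $H_0$ combined with an interpolation restoring the boundary data $H(0,\cdot)=\gamma$, $H(1,\cdot)=\delta$ and the periodicity condition stays in $\R^2\setminus\{(0,0)\}$ and yields an admissible smooth $H$. Alternatively, one can dispense with smoothing entirely by characterizing $W(\gamma)=\tfrac1{2\pi}\bigl(\widetilde{\theta}(2\pi)-\widetilde{\theta}(0)\bigr)$, where $\widetilde\theta\colon[0,2\pi]\to\R$ is a continuous lift along $\gamma$ of the angular coordinate $\x\mapsto\x/\|\x\|$ through the covering $t\mapsto(\cos t,\sin t)$ — this agrees with the integral formula since $\omega_W$ equals $\d\theta$ locally — and then lifting the continuous homotopy $H_0$ and noting that $s\mapsto\widetilde{H_0}(s,2\pi)-\widetilde{H_0}(s,0)$ is a continuous $2\pi\Z$-valued function, hence constant; this gives $W(\gamma)=W(\delta)$ directly.
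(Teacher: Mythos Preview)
Your argument is correct: verifying that $\omega_W$ is closed on $\R^2\setminus\{(0,0)\}$ and then applying Stokes' theorem to the pullback over the homotopy rectangle is the standard route, and your two treatments of the continuous-versus-smooth homotopy issue (mollification, or the covering-space lift of the angular coordinate) are both sound.

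As for comparison with the paper: there is nothing to compare. The paper does not prove this proposition at all; it simply cites it as \cite[Corollary 3.8]{fulton1995} and uses it as a black box (in the proof of Proposition~\ref{inv-change}). So your proposal supplies an argument where the paper gives only a reference. If anything, note that Fulton's own treatment in that reference proceeds via the angle-lifting characterization you sketch in your final paragraph rather than via Stokes, so your alternative approach is closer in spirit to the cited source than your primary one.
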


Now, consider a smooth vector field $A(\x)=(A_1(\x),A_2(\x))$ defined on an open subset $D\subset\R^2.$  Let $\gamma\subset D$ be a smooth oriented closed curve. Assume that $A(\x)$ in nonsingular on $\gamma$. When a point $\x$ moves one cycle around $\gamma$ in the counterclockwise direction, the vector $A(\x)$ winds around the origin an integral number of revolutions. The total number of revolutions $I_{\gamma}(A)$ is called the {\it rotation number of the vector field $A(\x)$ around $\gamma$} \cite{QTheory} (or {\it index of $A$ on $\gamma$}). Following the notation above,
\begin{equation}\label{IgA}
I_{\gamma}(A):=W(\Gamma)=\dfrac{1}{2\pi}\int_{\Gamma} \omega_W,
\end{equation}
where  $\Gamma:=\{A(\x),\, \x\in\gamma\}.$ Define the auxiliary functions $p_A,q_A:D\subset\R^2\to\R^2$ such that
\begin{equation}\label{pA.qA}
	\begin{aligned}
		p_A(\alpha):=&\dfrac{-A_{2}(\alpha)}{A^2_{1}(\alpha)+A^2_{2}(\alpha)} \nabla A_{1}(\alpha)\\
		q_A(\alpha):=&\dfrac{A_{1}(\alpha)}{A^2_{1}(\alpha)+A^2_{2}(\alpha)} \nabla A_{2}(\alpha).
	\end{aligned}
\end{equation}
Then, \eqref{IgA} becomes
\begin{equation*}
	\begin{aligned}
		I_{\gamma}(A)
		=&\dfrac{1}{2 \pi}\left(\displaystyle\int_{a}^{b}\dfrac{-A_{2}(\alpha(t))}{A^2_{1}(\alpha(t))+A^2_{2}(\alpha(t))} \nabla A_{1}(\alpha(t))\cdot \alpha'(t)dt\right.\\
		&+\displaystyle\left.\int_{a}^{b}\dfrac{A_{1}(\alpha(t))}{A^2_{1}(\alpha(t))+A^2_{2}(\alpha(t))} \nabla A_{2}(\alpha(t))\cdot \alpha'(t)dt\right)\\
		=&\displaystyle\dfrac{1}{2 \pi}\int_{a}^{b} \big(p_A(\alpha(t))+q_A(\alpha(t))\big)\cdot \alpha'(t)\,dt.
	\end{aligned}
\end{equation*}

The next result provides the invariance of $I_{\gamma}(A)$ under change of coordinates by diffeomorphism. Although very intuitive, we were not able to find any reference for a proof of such a result, thus we shall provide it.
\begin{proposition}\label{inv-change}
Let $A:D\subset\R^2\to\R^2$ be a vector field, $\gamma\subset D$ a smooth oriented closed curve, and $\alpha:D\to D^*$ a diffeomorphism, where $D^*=\alpha(D)\subset \R^2$. Assume that $A$ does not vanish on $\gamma$. Then, 
\[
I_{\alpha(\gamma)}(\alpha_*A)=I_{\gamma}(A),
\]
where $\alpha_*A:D^*\to \R^2$ is the pushforward of $A$ by $\alpha$.
\end{proposition}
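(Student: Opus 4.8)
The plan is to reduce the statement to an identity between winding numbers of planar closed curves and then to invoke the homotopy invariance of the winding number, Proposition~\ref{homotopyW}. First I would fix a smooth parametrization $\beta\colon[a,b]\to D$ of $\gamma$. Then $\alpha\circ\beta$ parametrizes $\alpha(\gamma)$, and the curve traced by $\alpha_*A$ along $\alpha(\gamma)$ is, as a parametrized curve,
\[
t\longmapsto (\alpha_*A)\bigl(\alpha(\beta(t))\bigr)=D\alpha(\beta(t))\,A(\beta(t)).
\]
Hence, writing $\Psi(\x):=D\alpha(\x)\,A(\x)$ — which does not vanish on $\gamma$, since $D\alpha(\x)$ is invertible and $A(\x)\neq0$ there — formula \eqref{IgA} together with the parametrization-independence of the winding number gives $I_{\alpha(\gamma)}(\alpha_*A)=W(\Psi(\gamma))$, while $I_{\gamma}(A)=W(A(\gamma))$. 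So the whole proposition reduces to the identity $W(\Psi(\gamma))=W(A(\gamma))$.

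Next I would ``freeze'' the Jacobian along $\gamma$. Here I use that $\alpha$ is orientation-preserving, so $\det D\alpha>0$ throughout $D$, and that $\gamma$ bounds a closed disk $\bar\Delta\subset D$ — which is exactly the situation in all our applications, where $\gamma=\partial B$ and $\bar\Delta=B$. Let $\x_0$ denote the center of $B$ and put $L:=D\alpha(\x_0)\in GL^+(2,\R)$. By convexity of $\bar\Delta$, the maps $G_s\colon\gamma\to GL^+(2,\R)$, $G_s(\x):=D\alpha\bigl((1-s)\x+s\x_0\bigr)$ for $s\in[0,1]$, form a homotopy from $D\alpha|_\gamma$ (at $s=0$) to the constant map $L$ (at $s=1$). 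Consequently $(s,\x)\mapsto G_s(\x)\,A(\x)$ is a homotopy in $\R^2\setminus\{(0,0)\}$ between the closed curves $\Psi(\gamma)$ and $\x\mapsto L\,A(\x)$, so Proposition~\ref{homotopyW} yields $W(\Psi(\gamma))=W(L\cdot A(\gamma))$.

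It then remains to see that the fixed invertible matrix $L$ does not affect winding numbers. Since $L$ has positive determinant and $GL^+(2,\R)$ is path-connected, there is a path $s\mapsto L_s$ in $GL^+(2,\R)$ with $L_0=L$ and $L_1=\mathrm{Id}$ — explicitly, using the polar decomposition $L=RP$ with $R\in SO(2)$ and $P$ symmetric positive definite, one may take $L_s:=R_s\bigl((1-s)P+s\,\mathrm{Id}\bigr)$, where $R_s$ is the rotation interpolating $R$ to $\mathrm{Id}$. Then $(s,\x)\mapsto L_s\,A(\x)$ is a homotopy in $\R^2\setminus\{(0,0)\}$ from $\x\mapsto L\,A(\x)$ to $A(\gamma)$, whence $W(L\cdot A(\gamma))=W(A(\gamma))$, again by Proposition~\ref{homotopyW}. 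Chaining the three equalities gives $I_{\alpha(\gamma)}(\alpha_*A)=W(\Psi(\gamma))=W(L\cdot A(\gamma))=W(A(\gamma))=I_{\gamma}(A)$.

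The delicate point is the middle step: a priori the loop $\x\mapsto D\alpha(\x)$ could wind nontrivially around $GL(2,\R)$ along $\gamma$, and then $W(\Psi(\gamma))$ and $W(A(\gamma))$ would differ by that winding degree. What rescues us is precisely that $D\alpha|_\gamma$ extends over the disk $\bar\Delta$ and takes values in the single connected component $GL^+(2,\R)$, which forces it to be null-homotopic there; this is where the orientation-preserving hypothesis and the contractibility of $\gamma$ inside $D$ are genuinely used. (If $\alpha$ reverses orientation one gets instead $W(\Psi(\gamma))=-W(A(\gamma))$, so the statement is to be read for the orientation-compatible chart changes occurring in this paper, for which the argument above applies verbatim.)
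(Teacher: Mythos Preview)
Your argument is correct and, like the paper's, rests on the homotopy invariance of the winding number (Proposition~\ref{homotopyW}); the difference is in how the homotopy between $\Gamma^*=\{D\alpha(\x)A(\x):\x\in\gamma\}$ and $\Gamma=\{A(\x):\x\in\gamma\}$ is manufactured. You contract the evaluation point of the Jacobian radially to the centre of the ball via $G_s(\x)=D\alpha\bigl((1-s)\x+s\x_0\bigr)$, which requires $\gamma=\partial B$ to bound a convex disk inside $D$; the paper instead works purely along the curve, taking a parametrization $\theta:[0,1]\to\gamma$ and setting $H(s,t)=M(s)\,d\alpha(\theta(s\,t))\,A(\theta(t))$, where $M$ is a path in $GL(2,\R)$ from $[d\alpha(\theta(0))]^{-1}$ to $\pm I_2$, so that at $s=0$ the factor $d\alpha(\theta(s\,t))$ freezes to the constant $d\alpha(\theta(0))$ and is cancelled by $M(0)$ --- no filling disk needed. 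Your construction is more geometric and perfectly suited to the actual uses in the paper (where $\gamma$ is always a boundary circle), whereas the paper's is phrased for an arbitrary closed $\gamma\subset D$. On orientation: the paper does not add your orientation-preserving hypothesis and instead argues that in the reversing case one lands on $-\Gamma$ and then invokes $W(-\Gamma)=W(\Gamma)$. Your caution here is in fact well placed, since in dimension two $-I_2$ has determinant $+1$ and so lies in $GL^{+}(2,\R)$; the genuine ``other component'' is represented by a reflection, under which the winding number does change sign. As you observe, only orientation-compatible chart transitions are needed in the applications, so the restricted statement you prove suffices.
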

\begin{proof}
From \eqref{IgA}, $I_{\gamma}(A)=W(\Gamma)$ and $I_{\alpha(\gamma)}(\alpha_*A)=W(\Gamma^*)$, where $\Gamma=\{A(\x):\,x\in\gamma\}$ and
\[
\Gamma^*:=\{\alpha_*A(\alpha(\x)):\,x\in\gamma\}=\{d\alpha(\x)A(\x):\,x\in\gamma\}.
\]

We claim that $\Gamma^*$ is homotopic in $D\setminus\{(0,0)\}$ either to $\Gamma$ or $-\Gamma$.
In what follows we shall construct such a homotopy. 

Let $\theta:[0,1]\to D$ be a parametrization of $\gamma$. Thus $\Gamma$ and $\Gamma^*$ are parametrized, respectively, by $\Theta(t)=A(\theta(t))$ and $\Theta^*(t)=d\alpha(\theta(t))A(\theta(t))$. We know that the set of invertible $2\times 2$ real matrices has two path-connected components, one of them containing the identity matrix $I_2$ and the other one containing $-I_2$. Thus, since $d\alpha(\theta(0))$ is invertible,  there exists a path $M(s)$ of invertible matrices satisfying $M(0)=[d\alpha(\theta(0))]^{-1}$ and either (a) $M(1)=I_2$ or (b) $M(1)=-I_2$. In both cases, consider the homotopy $H(s,t)=M(s)d\alpha(\theta(s\cdot t))A(\theta(t))$. Notice that $H(0,t)=A(\theta(t))=\Theta(t)$ and, in case (a) $ H(1,t)=d\alpha(\theta(s\cdot t))A(\theta(t))=\Theta^*(t)$; and in case (b) $H(1,t)=-d\alpha(\theta(s\cdot t))A(\theta(t))=-\Theta^*(t)$. Furthermore, since $A$ does not vanish on $\gamma$ and $M(s)d\alpha(\theta(s\cdot t))$ is invertible for every $(s,t)\in[0,1]\times[0,1]$, we conclude that $H(s,t)\neq(0,0)$ for every $(s,t)\in[0,1]\times[0,1]$.

Hence, Proposition \ref{homotopyW} implies that either $W(\Gamma^*)= W(\Gamma)$ or $W(\Gamma^*)= W(-\Gamma).$ Taking into account \eqref{intdef} and \eqref{1form}, we see that $W(-\Gamma)=W(\Gamma).$ Therefore,  $I_{\alpha(\gamma)}(\alpha_*A)=W(\Gamma^*)=W(\Gamma)=I_{\gamma}(A).$ 
\end{proof}

If $\x_0\in D$ is an isolate critical point of $A(\x)$, then there exists $r>0$ such that the closed ball $B=B_r(\x_0)\subset D$  does not contain any critical point other than $x_0$. Accordingly, the {\it index $I_{\x_0}(A)$ of the vector field $A$ at the critical point $\x_0$} is defined as
\[
I_{\x_0} (A):=I_{\p B}(A)=\dfrac{1}{2\pi}\int_{\Gamma} \omega_W,
\]
where now  $\Gamma=\{A(\x),\, \x\in \p B\}.$ It is well known that if $\gamma$ is a closed curve on $D$ enclosing a finite number of isolated singularities, $x_1,x_2,\ldots,x_n,$ then
\[
I_\gamma(A)=\sum_{i=1}^n I_{x_i}(A).
\]

The next results, taken from \cite{MR2256001} and \cite{QTheory}  are classical properties of the index for smooth vector fields that have been used throughout the paper.

\begin{proposition}[{\cite[Lemma 6.19]{MR2256001}}]\label{appendixpropcurvas}
Suppose that $\gamma\subset D$ can be continuously deformed into $\gamma'$ without passing through a singularity. Then $$I_{\gamma}(A)=I_{\gamma'}(A)$$
\end{proposition}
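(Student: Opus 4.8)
The plan is to reduce the statement to Proposition \ref{homotopyW} by transporting the given deformation of $\gamma$ through the vector field $A$. First I would make the hypothesis precise: a continuous deformation of $\gamma$ into $\gamma'$ that avoids the singularities of $A$ is a continuous map $h:[0,1]\times[a,b]\to D$ with $h(s,a)=h(s,b)$ for every $s\in[0,1]$, such that $h(0,\cdot)$ is a parametrization of $\gamma$, $h(1,\cdot)$ is a parametrization of $\gamma'$, and $A(h(s,t))\neq(0,0)$ for every $(s,t)\in[0,1]\times[a,b]$.

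Next I would define $H(s,t):=A(h(s,t))$. Since $A$ is smooth and $h$ is continuous, $H$ is continuous, and by the avoidance hypothesis $H$ takes values in $\R^2\setminus\{(0,0)\}$. Moreover $H(0,\cdot)$ parametrizes $\Gamma=\{A(\x):\x\in\gamma\}$ and $H(1,\cdot)$ parametrizes $\Gamma'=\{A(\x):\x\in\gamma'\}$; both $\Gamma$ and $\Gamma'$ are smooth oriented closed curves, since $\gamma$ and $\gamma'$ are smooth closed curves and $A$ is smooth and nonvanishing on them. Hence $H$ is a homotopy in $\R^2\setminus\{(0,0)\}$ between $\Gamma$ and $\Gamma'$. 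Applying Proposition \ref{homotopyW} gives $W(\Gamma)=W(\Gamma')$, and by the definition \eqref{IgA} of the rotation number we conclude $I_{\gamma}(A)=W(\Gamma)=W(\Gamma')=I_{\gamma'}(A)$.

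The only point requiring a little care is that the intermediate curves $\gamma_s=h(s,\cdot)$ need not be smooth, so the integral formula defining the rotation number need not apply to them. This is not an obstacle, however: the argument only uses the rotation numbers of the endpoints $\gamma$ and $\gamma'$ (which are smooth by hypothesis), and it invokes the deformation exclusively at the level of the image curves $\Gamma,\Gamma'$ in the punctured plane, where Proposition \ref{homotopyW} asks only for a continuous homotopy. Note also that, in contrast with Proposition \ref{inv-change}, no orientation reversal can occur here: $H$ deforms $\Gamma$ into $\Gamma'$ preserving the orientation carried by the deformation parameter, so there is no sign ambiguity to rule out.
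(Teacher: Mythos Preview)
The paper does not supply its own proof for this proposition; it is recorded in the Appendix as a known background result, attributed to \cite[Lemma 6.19]{MR2256001}, and used without further argument. Your proof is correct and is the standard one: pushing the deformation $h$ forward through $A$ gives a continuous homotopy $H=A\circ h$ between $\Gamma$ and $\Gamma'$ inside $\R^2\setminus\{(0,0)\}$, and Proposition~\ref{homotopyW} then yields $W(\Gamma)=W(\Gamma')$, i.e.\ $I_\gamma(A)=I_{\gamma'}(A)$. Your remark that the intermediate loops $\gamma_s$ need not be smooth is on point and correctly handled, since only the endpoints enter the integral definition while the homotopy invariance of the winding number requires mere continuity.
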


\begin{proposition}[{\cite[Property 1]{QTheory}}]\label{appendixprop1}
Let $C_1,C_2\subset D$ be  two closed connected regions. Suppose that the intersection of the interiors of $C_1$ and $C_2$ is empty and let $C=C_1 \cup C_2.$ Then $$I_{\partial C}(A)=I_{\partial C_1}(A)+I_{\partial C_2}(A).$$
\end{proposition}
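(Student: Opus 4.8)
The plan is to reduce everything to the integral representation \eqref{IgA} of the rotation number and to exploit the cancellation that occurs along the shared boundary of $C_1$ and $C_2$. Write $\Lambda := \partial C_1 \cap \partial C_2$ for the common boundary; since the interiors of $C_1$ and $C_2$ are disjoint while $C = C_1 \cup C_2$, this is a (possibly empty) union of arcs separating $C_1$ from $C_2$. Set $\gamma_i := \overline{\partial C_i \setminus \Lambda}$ for $i \in \{1,2\}$. Then, regarded as oriented $1$-cycles, $\partial C_i = \gamma_i * \Lambda_i$, where $\Lambda_i$ denotes $\Lambda$ carried with the orientation it inherits as part of $\partial C_i$, $\partial C = \gamma_1 * \gamma_2$ with its boundary orientation, and $*$ denotes gluing of oriented arcs with matching endpoints (a disjoint union of such, if the boundaries are disconnected). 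We of course assume, as is needed for the rotation numbers even to be defined, that $A$ does not vanish on $\partial C_1 \cup \partial C_2$.

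Next I would record the orientation fact that is the geometric heart of the statement: along each component of $\Lambda$ the region $C_1$ lies on one side and $C_2$ on the other, so the orientation induced on $\Lambda$ as part of $\partial C_1$ is opposite to the one induced as part of $\partial C_2$; that is, $\Lambda_2 = -\Lambda_1$. Granting this, the rest is a direct computation. Writing $\Gamma[\gamma] := \{A(\x) : \x \in \gamma\}$, the map $A$ carries a concatenation of arcs to a concatenation of arcs and reverses the orientation of $\Gamma[\gamma]$ when $\gamma$ is reversed, while $\int_{(\cdot)} \omega_W$ is additive over concatenation and satisfies $\int_{-\Gamma}\omega_W = -\int_{\Gamma}\omega_W$ (immediate from \eqref{intdef}). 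Hence, by \eqref{IgA},
\[ 2\pi\big(I_{\partial C_1}(A) + I_{\partial C_2}(A)\big) = \sum_{i=1}^{2}\Big(\int_{\Gamma[\gamma_i]}\omega_W + \int_{\Gamma[\Lambda_i]}\omega_W\Big) = \int_{\Gamma[\gamma_1]}\omega_W + \int_{\Gamma[\gamma_2]}\omega_W, \]
since $\int_{\Gamma[\Lambda_2]}\omega_W = \int_{\Gamma[-\Lambda_1]}\omega_W = -\int_{\Gamma[\Lambda_1]}\omega_W$ cancels the $\Lambda_1$-term. The right-hand side equals $\int_{\Gamma[\gamma_1 * \gamma_2]}\omega_W = \int_{\Gamma[\partial C]}\omega_W = 2\pi I_{\partial C}(A)$, which is the assertion.

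The only real obstacle is the bookkeeping underlying the first two paragraphs: making precise that $\partial C$ is obtained from $\partial C_1 \cup \partial C_2$ by deleting exactly $\Lambda$, and that the induced orientations behave as claimed, i.e. $\Lambda_2 = -\Lambda_1$. This is intuitively evident but does require the regions to be sufficiently tame — say $C_1$ and $C_2$ compact with piecewise-$C^1$ boundary and $\Lambda$ a piecewise-$C^1$ one-dimensional submanifold — which is the standing regularity assumption for curves in this appendix. Within such a class the orientation statement follows from the description of the boundary orientation via an outward-pointing normal: along $\Lambda$, the outward normal of $C_1$ points into $C_2$ and conversely, so the two induced orientations are opposite. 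Alternatively, in the generic situation where $A$ has only isolated singularities in $\mathrm{int}(C)$, the identity also follows from the already-recalled fact that $I_\gamma(A)$ equals the sum of the indices of the singularities enclosed by $\gamma$, since those singularities are partitioned between $\mathrm{int}(C_1)$ and $\mathrm{int}(C_2)$ (none lying on $\Lambda$, where $A \neq 0$); but the concatenation argument above needs no such restriction.
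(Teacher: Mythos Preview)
Your argument is correct and is precisely the standard cancellation-along-the-common-boundary proof one finds in the classical references. Note, however, that the paper does not supply its own proof of this proposition: it is stated in the Appendix as a quotation of \cite[Property~1]{QTheory}, so there is nothing in the paper to compare against beyond the citation itself.
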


\begin{proposition}[{\cite[Property 2]{QTheory}}]\label{index0fornosingularity}
  Assume that $A$ has no singularities in a bounded closed connected region $C\subset D$. Then, $I_{\partial C}(A)=0$
\end{proposition}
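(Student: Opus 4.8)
The plan is to exploit that $\omega_W$ is a \emph{closed} $1$-form on $\R^2\setminus\{(0,0)\}$ together with Stokes' theorem. Since $A$ has no singularities on the compact region $C$, the restriction $A|_C$ is a smooth map $C\to\R^2\setminus\{(0,0)\}$, so the pullback $A^*\omega_W$ is a well-defined smooth $1$-form on a neighborhood of $C$. A one-line computation shows $\mathrm{d}\omega_W=0$ on $\R^2\setminus\{(0,0)\}$ (the mixed partials of the coefficients of $\omega_W$ coincide), and hence $\mathrm{d}(A^*\omega_W)=A^*(\mathrm{d}\omega_W)=0$ on $C$.

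First I would record that, by the definition \eqref{IgA} of $I_{\partial C}(A)$ and the change-of-variables used in \eqref{intdef}, one has $2\pi\, I_{\partial C}(A)=\int_{\Gamma}\omega_W=\int_{\partial C}A^*\omega_W$, where $\Gamma=\{A(\x):\x\in\partial C\}$ carries the orientation induced from the boundary orientation of $\partial C$. Then I would apply Stokes' theorem to the closed form $A^*\omega_W$ on $C$, obtaining $\int_{\partial C}A^*\omega_W=\int_C \mathrm{d}(A^*\omega_W)=0$; dividing by $2\pi$ yields $I_{\partial C}(A)=0$.

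The only point requiring a little care is the regularity of $\partial C$, since Stokes' theorem in this form needs $C$ to be a compact region with piecewise-$C^1$ boundary — which is precisely the setting in which $I_{\partial C}$ is defined and used throughout the paper (closed balls, and closures of differences of balls). An alternative argument staying entirely within the tools already developed in the Appendix: when $C$ is simply connected, $\partial C$ can be continuously deformed inside $C$ to an interior point, so $\Gamma=A(\partial C)$ is null-homotopic in $\R^2\setminus\{(0,0)\}$ and Proposition \ref{homotopyW} gives $W(\Gamma)=0$; in the general planar case, $\partial C$ splits into an outer loop $\gamma_0$ and inner loops $\gamma_1,\dots,\gamma_k$, with $\gamma_0$ freely homotopic in $C$ to the concatenation $\gamma_1\cdots\gamma_k$, so Proposition \ref{homotopyW} gives $W(A(\gamma_0))=\sum_{j=1}^k W(A(\gamma_j))$, whence $I_{\partial C}(A)=W(A(\gamma_0))-\sum_{j=1}^k W(A(\gamma_j))=0$.

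I expect the main (and rather mild) obstacle to be bookkeeping: fixing the orientation conventions on the components of $\partial C$ so that the boundary term produced by Stokes matches the signed sum of winding numbers defining $I_{\partial C}(A)$, and recording that the regions $C$ arising in the paper indeed have the boundary regularity needed for Stokes (equivalently, the decomposition into outer and inner loops used in the alternative argument).
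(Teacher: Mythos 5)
Your argument is correct, but note that the paper does not actually prove this proposition: it is quoted verbatim from the literature (\cite[Property 2]{QTheory}) and used as a black box in the Appendix, so there is no in-paper proof to compare against. Your Stokes-theorem route is the standard one and is sound: since $A$ is nonvanishing on the compact set $C$, it is nonvanishing on a neighborhood of $C$, so $A^*\omega_W$ is a smooth closed $1$-form there, and $2\pi I_{\partial C}(A)=\int_{\Gamma}\omega_W=\int_{\partial C}A^*\omega_W=\int_C \mathrm{d}(A^*\omega_W)=0$; the identification of $\int_\Gamma\omega_W$ with $\int_{\partial C}A^*\omega_W$ is immediate from \eqref{intdef} and the parametrization $t\mapsto A(\alpha(t))$ of $\Gamma$. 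Your alternative homotopy argument is actually the one better aligned with the paper's toolkit (it uses only Proposition \ref{homotopyW}) and, importantly, it covers the multiply connected case $C=\overline{B\setminus\cup_i B_i}$ in which the paper actually invokes the proposition (proof of Proposition \ref{prop:somaindexsingu}); there the cancellation of the connecting arcs in the concatenation $\gamma_1\cdots\gamma_k$ and the orientation bookkeeping you flag are exactly the points to write out. The only caveats are the ones you already identify — boundary regularity for Stokes and orientation conventions on the boundary components — and both are harmless in the settings the paper uses.
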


\begin{proposition}[{\cite[Theorem 1.3]{QTheory}}]\label{prop:pertsmooth}
Let $C\subset D$ be a closed bounded region. Suppose that $A_0$ and $A_1$ are smooth vector field on $D$ such that $A_0$ is nonsingular on $\partial C$ and $||A_1(\x)-A_0(\x)||<||A_0(\x)||$ for every $\x\in\p C$. Then,
$A_1(\x)$ is nonsingular on $\partial D$ and $I_{\partial D}(A_0)=I_{\partial D}(A_1).$
\end{proposition}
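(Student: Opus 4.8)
The plan is to reduce the statement to the homotopy invariance of the winding number, Proposition \ref{homotopyW}. First, I would observe that $A_1$ is automatically nonsingular on $\p C$ (here, and in the conclusion, $\p D$ should read $\p C$): indeed, if $A_1(\x)=0$ for some $\x\in\p C$, then $\|A_1(\x)-A_0(\x)\|=\|A_0(\x)\|$, contradicting the strict inequality in the hypothesis. Hence $\p C$ contains no singularity of $A_0$ nor of $A_1$, so both rotation numbers $I_{\p C}(A_0)$ and $I_{\p C}(A_1)$ are well defined via \eqref{IgA}.

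Next I would introduce the straight-line homotopy $H(t,\x):=A_0(\x)+t\big(A_1(\x)-A_0(\x)\big)$ for $(t,\x)\in[0,1]\times\p C$, interpolating between $A_0$ (at $t=0$) and $A_1$ (at $t=1$). The key step is to verify that $H$ does not vanish on $[0,1]\times\p C$, which follows at once from the reverse triangle inequality:
\[
\|H(t,\x)\|\ \ge\ \|A_0(\x)\|-t\,\|A_1(\x)-A_0(\x)\|\ \ge\ \|A_0(\x)\|-\|A_1(\x)-A_0(\x)\|\ >\ 0
\]
for every $\x\in\p C$ and every $t\in[0,1]$, the last inequality being precisely the hypothesis. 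Setting $\Gamma_t:=\{H(t,\x):\ \x\in\p C\}$, the family $\{\Gamma_t\}_{t\in[0,1]}$ is therefore a homotopy in $\R^2\setminus\{(0,0)\}$ of oriented closed curves joining $\Gamma_0=\{A_0(\x):\ \x\in\p C\}$ to $\Gamma_1=\{A_1(\x):\ \x\in\p C\}$.

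Finally, Proposition \ref{homotopyW} gives $W(\Gamma_0)=W(\Gamma_1)$, which by the definition \eqref{IgA} of the rotation number is exactly $I_{\p C}(A_0)=I_{\p C}(A_1)$.

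I do not anticipate a genuine obstacle: this is the classical Poincar\'e--Bohl / Rouch\'e-type deformation lemma, and the only point that needs mild care is the structure of $\p C$ for a general ``closed bounded region'', so that Proposition \ref{homotopyW} (stated for smooth oriented closed curves) can be applied. In every instance where Proposition \ref{prop:pertsmooth} is invoked in this paper the region $C$ is a closed ball, so $\p C$ is a single smooth circle and the argument above applies verbatim; in the general case one decomposes $\p C$ into its smooth oriented closed components and argues on each. Alternatively, the statement is standard and may simply be quoted, as done, from \cite[Theorem 1.3]{QTheory}.
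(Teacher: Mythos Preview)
Your argument is correct and is precisely the standard Poincar\'e--Bohl/Rouch\'e deformation proof. However, there is nothing to compare: the paper does not supply a proof of Proposition~\ref{prop:pertsmooth} at all---it is stated in the Appendix as a quotation of \cite[Theorem~1.3]{QTheory} and left unproven. Your sketch therefore goes beyond what the paper provides, and your remark that the conclusion should read $\partial C$ rather than $\partial D$ is also correct.
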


Finally, let $\CF:M\to TM$ be a smooth vector field defined on a two dimensional smooth manifold $M$. Assume that $p_0\in M$ is an isolated singularity of $\CF.$ Let $(U,\Phi)$ be a chart of $M$ around $p_0$. Then, the index of $\CZ$ at $p_0$ is defined as $I_{p_0}(\CF):= I_{\Phi(p_0)}(\Phi_* \CF).$ 
The next result is the famous Poincar\'{e}--Euler Theorem (see \cite{Hopf27}) which relates the indices of the singularities of a vector field $\CF$ defined on a compact manifold $M$ with the Euler characteristic of $M$, $\chi(M)$. Details about the Euler characteristic of a compact manifold can be found in \cite{fulton1995}.

\begin{theorem}[Poincar\'e-Hopf Theorem]\label{PHt}
Let $\CF$ be a smooth vector field defined on a $2$-dimensional compact manifold $M$. Denote the set of the singularities of $\CF$ by $\CS$ and assume that they are all isolated. Then,
\[
\sum_{p\in\CS} I_p(\CF)=\chi(M),
\]
where $\chi(M)$ is the Euler Characteristic of $M$.
\end{theorem}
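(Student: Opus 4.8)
The plan is to prove the theorem in three stages: first, to establish that the total index $\ind(\CF):=\sum_{p\in\CS}I_p(\CF)$ depends only on $M$ and not on the particular smooth vector field $\CF$ with isolated singularities; second, to compute $\ind(\CF_0)$ for one conveniently chosen model vector field $\CF_0$ and show it equals $\chi(M)$; and third, to conclude by combining the two.

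For the independence stage, let $\CF$ and $\CG$ be two smooth vector fields on $M$ with isolated singularities. I would first use the local perturbation tools of the Appendix (Proposition \ref{prop:pertsmooth} together with the additivity Proposition \ref{appendixprop1}) to replace each field, near each of its zeros, by a small perturbation all of whose zeros are nondegenerate, without altering the local index sum; recall that at a nondegenerate zero $p$ the index equals $\sgn\det\big(D\CF(p)\big)$. I would then form the fiberwise convex homotopy $\CF_t:=(1-t)\CF+t\CG$, which is well defined since each fiber $T_pM$ is a vector space, and consider its zero set $\CW=\{(p,t)\in M\times[0,1]:\CF_t(p)=0\}$. After a further generic perturbation supported away from $M\times\{0,1\}$, Sard's theorem makes $\CW$ a compact $1$-dimensional manifold with boundary $\p\CW\subset M\times\{0,1\}$, the boundary consisting exactly of the nondegenerate zeros of $\CF$ and of $\CG$, each carrying a sign. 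Since the signed count of the boundary points of a compact $1$-manifold vanishes, the signed (hence index) counts of the zeros of $\CF$ and of $\CG$ agree, giving $\ind(\CF)=\ind(\CG)$.

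For the computation stage, I would fix a triangulation of $M$ (which exists by Rad\'o's theorem) with $V$ vertices, $E$ edges, and $F$ faces, so that $\chi(M)=V-E+F$. I would then build a single vector field $\CF_0$ adapted to this triangulation, having a repelling node at each vertex, a hyperbolic saddle at the midpoint of each edge, an attracting node at the barycenter of each face, and no other zeros. Using Proposition \ref{inv-change} to pass to standard local charts, the local index is a direct winding-number computation: $+1$ at each node and $-1$ at each saddle. Summing yields $\ind(\CF_0)=(+1)V+(-1)E+(+1)F=V-E+F=\chi(M)$. Combining the two stages, for any smooth vector field $\CF$ with isolated singularities we obtain $\sum_{p\in\CS}I_p(\CF)=\ind(\CF)=\ind(\CF_0)=\chi(M)$, which is the assertion.

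The main obstacle is the independence stage: making $\CW$ a genuine compact $1$-manifold requires a transversality argument via Sard's theorem, and care is needed to ensure the perturbations are supported away from $t=0,1$ so that the boundary is exactly the prescribed zeros with the correct signs; identifying the index of a general, possibly degenerate, isolated zero with the signed count of the nondegenerate zeros produced by a small perturbation is precisely where the Appendix perturbation and additivity lemmas are invoked. An alternative that sidesteps transversality on $M\times[0,1]$ is Milnor's degree argument: embed $M\subset\R^N$ and identify $\ind(\CF)$ with the degree of the Gauss map of the boundary of a tubular neighborhood, which manifestly does not depend on $\CF$; I would fall back on this should the cobordism bookkeeping prove cumbersome.
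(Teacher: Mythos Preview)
The paper does not supply a proof of this statement: Theorem~\ref{PHt} is the \emph{classical} Poincar\'e--Hopf Theorem, placed in the Appendix as a quoted background result with a reference to Hopf's original paper~\cite{hopf27}. There is therefore nothing in the paper to compare your argument against.

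That said, your outline is a standard and essentially correct route to the classical result: (i) show $\ind(\CF)$ is independent of $\CF$ by a homotopy/cobordism or degree argument, then (ii) evaluate on a vector field adapted to a triangulation. The main care point you already flag---making the zero set $\CW\subset M\times[0,1]$ a genuine compact $1$-manifold via transversality while leaving the endpoints untouched---is the only place where real work is needed, and your fallback to Milnor's Gauss-map degree argument is the usual way to avoid that bookkeeping. One minor comment: in the independence step you should also note that $M$ is compact, so the isolated-singularity hypothesis guarantees finitely many zeros and the sum is finite; and the reduction to nondegenerate zeros (``index of a degenerate zero equals signed count of nearby nondegenerate zeros after perturbation'') is itself a nontrivial lemma that the Appendix propositions only partially cover in the planar case---you may need to invoke it more carefully in charts.
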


\section*{Statements and Declarations}

JAC  is partially supported by the grants 2018/25575-0 and 2022/01375-7 of the São Paulo Research Foundation (FAPESP). RMM is partially supported by grants 2021/08031-9, 2018/03338-6 and 2018/13481-0, São Paulo Research Foundation (FAPESP) and CNPq grants 315925/2021-3 and 434599/2018-2. DDN is partially supported by S\~{a}o Paulo Research Foundation (FAPESP) grants 2022/09633-5, 2019/10269-3, and 2018/13481-0, and by Conselho Nacional de Desenvolvimento Cient\'{i}fico e Tecnol\'{o}gico (CNPq) grant 309110/2021-1.

Data sharing not applicable to this article as no datasets were generated or analysed during the current study.

Conflict of Interest: The authors declare that they have no conflict of interest.

\bibliographystyle{abbrv}
\bibliography{CasiMartNova2024}
\end{document}